\providecommand{\U}[1]{\protect\rule{.1in}{.1in}}
\newtheorem{theorem}{Theorem}
\newtheorem{definition}{Definition}
\newtheorem{remark}{Remark}
\newtheorem{proposition}{Proposition}
\newenvironment{proof}[1][Proof]{\noindent\textbf{#1.} }{\ \rule{0.5em}{0.5em}}
\providecommand{\U}[1]{\protect\rule{.1in}{.1in}}
\newcommand{\e}{\varepsilon}
\newcommand{\R}{\mathbb{R}}
\newcommand{\B}{\mathbb{B}}
\newcommand{\T}{\mathcal{T}}
\newcommand{\D}{\mathcal{D}}
\newcommand{\A}{\mathcal{A}}
\title{\LARGE \bf
Hamilton-Jacobi-Bellman Equation for Control Systems with Friction 
}
\author{Fabio Tedone$^{*}$ and Michele Palladino$^{*}$
\thanks{$^{*}$GSSI - Gran Sasso Science Institute,
via F. Crispi 7, L'Aquila, 67100, Italy.
{\tt\small fabio.tedone@gssi.it, michele.palladino@gssi.it}}
}
\begin{document}
\maketitle
\thispagestyle{empty}
\pagestyle{empty}
\begin{abstract}
This paper proposes a new framework to model control systems in which a dynamic friction occurs. The model consists of a controlled differential inclusion with a discontinuous right-hand side, which still preserves existence and uniqueness of the solution for each given input function $u(t)$. Under general hypotheses, we are able to derive
the Hamilton-Jacobi-Bellman equation for the related free time optimal control problem and to characterise the value function as the unique, locally Lipschitz continuous viscosity solution.
\end{abstract}

\section{Introduction}
Dynamic friction occurs between two or more solid bodies that are moving one relative to the other and rub together along parts of their surfaces. Modeling dynamic friction is not an easy task since it concerns the study of, possibly discontinuous, dynamic equations which inherit a dissipative structure. A classic approach to modeling the static and the dynamic friction is provided by the Coulomb model \cite{coulomb1821theorie}, which indeed consists in a non-smooth, dissipative dynamics. Since Coulomb's seminal work, several other models for friction have been developed. On this subject, an interesting overview is provided in  \cite{pennestri2016review}, in which several other models for friction are shown. 

Modeling dynamic friction in control systems has so far received much less attention. Although the resulting control system has a discontinuous dynamic equation, its dissipative structure still yields well-posedness of the control system: for a given input $u(t)$ and a given initial condition $x(t_0)=x_0$ at time $t_0$, the system has a unique state $x(t)$ for all $t\geq t_0$. To our knowledge, the only studies on control systems with friction (see, e.g., \cite{colombo2016minimum}, \cite{colombo2020optimization}, \cite{de2019optimal} and references therein)  concern (or are related to) the Moreau's Sweeping Process \cite{moreau1977evolution}, \cite{moreau1999numerical}, which is a notable example of dynamical systems, in which the dynamic friction phenomenon occurs between a rigid body and a moving, perfectly indeformable, active constraint. 

In this paper we will introduce a new framework for modeling the dynamic friction, allowing for a slight penetration of a rigid body into another body (case which happens, for instance in the \emph{rigid-body penetration} field \cite{jones2003one,shi2014model}).
 To motivate the model of study, let us first assume that a solid body $\mathcal{B}$ is partially or totally immersed into another external body. Let $S$ be the region of contact of $\mathcal{B}$ with the external body. 
We aim at deriving the friction produced at a point $x\in S$, when a vector field $g$ is applied to $\mathcal{B}$ at $x$.  Now, suppose that the family of normal vectors to $\mathcal{B}$ is described by the mapping $ \alpha\mapsto \eta(x)\cdot Q(\alpha)$, where $\eta(x)$ is the normal to $\mathcal{B}$ at $x$ and $\alpha \mapsto Q(\alpha)$ is a matrix transporting $\eta(x)$ along $S$. Then, one can approximate the resulting vector field at the point $x\in S$ as the vector field $g$ minus the ``averaged friction" at $x$, namely (see Figure \ref{fig1})
$$g(t,x)-\int_A k(g(t,x),\alpha)\eta(x)\cdot Q(\alpha)\mu(d\alpha)=:g(t,x)-I(t,x).$$
Here, $k(g(t,x),\alpha)$ is a coefficient measuring the strength of response to the vector field $g$ at the point $x$, while the integral over $A$ sums up the total averaged dynamic friction.  
Motivated by such a physical intuition, one then can consider the controlled differential inclusion
\begin{equation}\label{eqn:dF}
  \dot{x}\in  g(t,x,u)-\int_A{k(t,x,u,\alpha)\partial_x\varphi(x,\alpha)\mu(d\alpha)},
\end{equation}
where now the control $u$ determines the choice of a vector field, the strength to the response $k$ also depends on the control and the measure $\mu$ is allowed to choose the relevant, averaging points over $S$ through $A$. $\varphi$ is a \textcolor{black}{possibly} non-smooth function and $\partial _x$ is a suitable sub-gradient
(precise definitions will be provided in the following sections). 

Differential equations with discontinuous right-hand side has been used for several tasks such as to model electric circuits, hysteresis phenomena and mechanical constraints (see, e.g. \textcolor{black}{\cite{acary2010nonsmooth,drabek1999nonlinear,tanwani2015observer, brogliato2020dynamical} }). More recently, discontinuous differential equations have been proposed to model the growth of stems and vines (see, e.g., \cite{bressan2017growth}, \cite{bressan2017well}) and we expect that  the dynamics \eqref{eqn:dF} can be also used for similar purposes (for instance, to model the evolution of a soft robotic device that moves in soil \textcolor{black}{\cite{del2017efficient}, \cite{del2016circumnutations}, \cite{del2019characterization}, \cite{tedone2020optimal}}). The dynamics  \eqref{eqn:dF} has some strong connections with the controlled perturbed sweeping process $\dot{x}\in g(t,x,u)-N_{C}(x)$, where $C$ is a closed set and $N_C(x)$ is a suitable normal cone to $C$ at $x$. \textcolor{black}{Indeed}, when the strength to the response $k$ in \eqref{eqn:dF} is sufficiently large and $\varphi(x,\alpha)=d(x,C(\alpha))$ (where $d(\cdot, C(\alpha))$ is the distance function from $C(\alpha)$), then the model \eqref{eqn:dF} can be regarded as a perturbed, ``averagely swept", sweeping process (see, e.g. \cite{thibault2003sweeping}, Theorem 3.2). Let us also mention that the averaging occurring in \eqref{eqn:dF} has a quite different character compared to the one presented in the  Riemann-Stiltjies control literature (see, e.g. \cite{ross2015riemann}, \cite{bettiol2019necessary}, \cite{palladino2016necessary}), since the averaging in \eqref{eqn:dF} occurs in the dynamics and not in the cost. 

In this paper, we will mainly concentrate on the basic properties of a control system driven by \eqref{eqn:dF}. Furthermore, we will derive the Hamilton-Jacobi equation for the related free time optimal control problem of Mayer type.  Results of a similar kind have been derived in \cite{ye1998hamilton}, \cite{ye2000discontinuous}, \cite{malisoff2002viscosity} in the case in which optimal stopping and optimal exit time are considered and the dynamics is Lipschitz continuous. It is important to mention that  the characterization of the value function of an optimal control problem with Lipschitz continuous dynamics $F$ relies on the strong invariance backward in time of the control system's state trajectories with respect to the epigraph of the value function. Such a property is a consequence of the tacit assumption \textit{``if $F$ is Lipschitz continuous, then $-F$ is also Lipschitz continuous"}.  An excellent overview on this approach is  provided in \cite{plaskacz2003value} and (\cite{vinter2010optimal}, Chapter 12).
However, since the control system \eqref{eqn:dF} is merely one-sided Lipschitz continuous, the strong invariance backward in time of the state trajectories does not hold. \textcolor{black}{It is interesting to observe that the non-validity of the strong invariance principle backward in time with respect to the epigraph of the value function is also closely related to some non-uniqueness phenomena which arise when the value function is merely lower semicontinuous (and, in general, the value function of a free-time optimal control problem with end-point constraint is merely lower semi-continuous). An example of this kind of issue can be found in \cite{serea2003reflecting}.}
Therefore, the characterization of the value function as the unique solution of a Hamilton-Jacobi equation does not follow from the standard theory and requires \textcolor{black}{a different approach} \cite{colombo2016minimum,donchev2005strong}. 

The paper is organised as follows: in Sections \ref{sec:prel}-\ref{sec:general} we will provide the basic concepts, the notations and the problem formulation that we will refer to throughout the whole paper. In Section \ref{sec:basic} we will study the well-posedness of the model as a control system; in Section \ref{sec:Existence} we will describe the properties of the related, free time, optimal control problem and of the associated value function. Sections \ref{sec:DPP}-\ref{sec:HJB} provide useful properties of the value function and its characterization as viscosity solution of a suitable Hamilton-Jacobi equation. In Section \ref{sec:ex} an example showing the effectiveness of the theory is provided. The proofs of some technical results, useful in the development of the theory, are provided in the Appendix.
\begin{figure}[t!]
\centering
\includegraphics[width=0.32\textwidth]{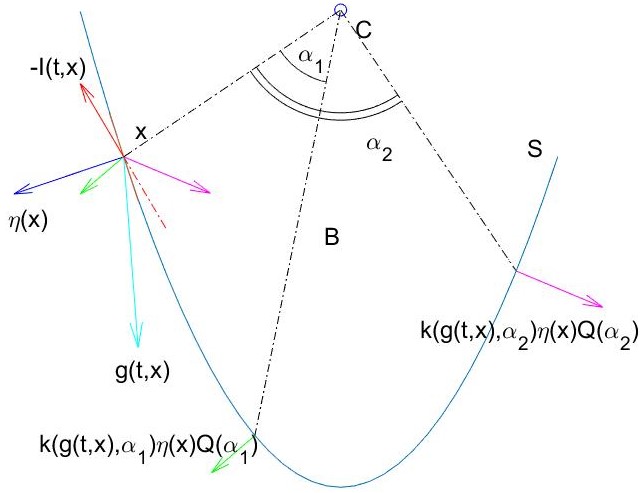}
\caption{\label{fig1} \textbf{An example generating the studied model}:\\ 
$\mu=\delta_{\alpha_1}+\delta_{\alpha_2}$ and $k(g,\alpha_i)=\lambda_i>0$ for $i=1,2$.
Therefore, the total friction at $x$ will be $-I(t,x)$, affecting $g(t,x)$.}
\end{figure}

\section{Preliminaries and Notations}\label{sec:prel}
\label{preliminaries}
In this section, we will recall some useful notations and concepts which will be used throughout the whole paper.
Let us use $\mathbb{B}$ to denote the open, unit ball and $\mathrm{Bd}(A)$ to denote the boundary of a set $A$. For a given closed set $C\subseteq \mathbb{R}^{n}$ and a point $x\in C$, the proximal normal cone to $C$ at $x$ is
\begin{equation}
\begin{array}{lr}
  N_C^P(x):=\left\{p\in\mathbb{R}^n: \exists \; M>0\; \mathrm{s.t.} \right . \\
 \left . \qquad  \qquad \qquad\qquad  \left<p,y-x \right>\leq M|y-x|^2\quad \forall y\in C \right\}
  \end{array}
\end{equation}
 For a given lower semi-continuous function $f:\mathbb{R}^{n}\rightarrow \mathbb{R}\cup\{\infty\}$, the domain of $f$ is $\mathrm{dom}(f):=\{x\in\mathbb{R}^n:\;f(x)<\infty \}$. The proximal sub-differential of $f$ at $x\in \mathrm{dom}(f)$ is
  \begin{displaymath}
  \partial_P f(x)=\left\{v\in\mathbb{R}^n:\;(v,-1)\in N_{\mathrm{epi}(f)}^P(x,f(x))\right\},
\end{displaymath}
where 
$\mathrm{epi}(f)= \left\{(x,\alpha)\in \mathbb{R}^{n+1}:\; f(x)\leq \alpha \right\}$ is the epigraph of the function $f$.
An equivalent characterization (see, e.g., \cite{vinter2010optimal}, Proposition $4.4.1$) of the proximal sub-differential is the following: $\xi \in \partial_{P} f(x)$ if there exist $M>0$ and $\varepsilon>0$ such that
\begin{equation}\label{eqn:prox-sub}
\left< \xi, y-x \right>\leq f(y) -f(x) + M|x-y|^{2},
\end{equation}
for each $y\in x+ \, \varepsilon\bar{\mathbb{B}}$. 
Furthermore, if $f:\mathbb{R}^{n}\rightarrow \mathbb{R}\cup\{\infty\}$ is locally Lipschitz continuous 
then  $\partial_{P} f(x)$ is locally bounded for each $x\in \mathrm{dom}(f)$. If $f:\mathbb{R}^{n}\rightarrow \mathbb{R}\cup\{\infty\}$ is lower semi-continuous and convex, then $\mathrm{epi}(f)$  is closed and convex. In particular this implies that $\partial_{P} f (x) \neq \emptyset$ for each $x\in\mathrm{dom}f$. Further, if $f$ is convex, then the proximal sub-differential $\partial_P f(x)$ coincides with the set
\begin{equation}\label{def:subdiff}
\begin{array}{lr}
\partial f(x):=\left\{\xi\in\mathbb{R}^n: f(z)\geq f(x)+\left<\xi,z-x\right> \right . \\[1mm]
 \left . \qquad  \qquad \qquad \qquad \qquad \qquad\forall z\in\mathrm{dom}(f)\right\}
  \end{array}
\end{equation}
and we will simply refer to it as subdifferential. 
It will be also helpful to define a notion of proximal super-differential. For a given upper semi-continuous function $f:\mathbb{R}^{n}\rightarrow \mathbb{R}\cup\{\infty\}$ and $x\in \mathrm{dom}\,(f)$, the proximal super-differential of $f$ at $x$ is
  \begin{displaymath}
  \partial^P f(x)=\left\{v\in\mathbb{R}^n:\;(-v,1)\in N_{\mathrm{hypo}(f)}^P(x,f(x))\right\},
\end{displaymath}
where
$\mathrm{hypo}(f)= \left\{(x,\alpha)\in \mathbb{R}^{n+1}:\; f(x)\geq \alpha \right\}$ is the hypograph of the function $f$.
Given $\Omega \subseteq \mathbb{R}^k$ and $M:\Omega\leadsto\mathbb{R}^r$, $M$ has closed graph if $\mathrm{Gr}\,M= \{(x,v):\; v\in M(x), \,x\in \Omega \}$ is closed. It is well known that if a multifunction is locally bounded and has closed graph, then $M$ is upper semicontinuous (see, a.g. \cite{van1999characteristic}, proposition AII.14). 
$M$ is said locally one-sided Lipschitz (OSL) if, for any compact $K\subset \Omega$, there is a constant $L\geq0$ such that $$\left<w-v, y-x\right>\leq L|x-y|^2,$$
for every $x,y \in K$, $v \in M(x)$ and $w \in M(y)$. 
Given a finite Radon measure $\mu$ and a  $\mu$-measurable set $A$, let us define
\begin{equation}
\begin{array}{lr}
L^{1}(A;\mu):=\left\{ g:A\rightarrow \mathbb{R}^n\;\mu\mathrm{-meas.}:\,\int_{A}\left|g(\alpha)\right|\mu(d\alpha) <\infty \right\}.
\end{array}
\end{equation}
Given a multifunction $\tilde\Gamma: \Omega\times A \leadsto\mathbb{R}^r$ , the parameterized integration  of $\tilde\Gamma$ (see, e.g., \cite{aumann1965integrals}, \cite{artstein1989parametrized}, \cite{saint1999parametrized}) is a new multifunction $\Gamma(x):=\int_A \tilde{\Gamma}(x,\alpha) \mu(d\alpha)$
where
\begin{equation}
\begin{array}{lr}
 \int_A \tilde{\Gamma}(x,\alpha) \mu(d\alpha):=\left\{\int_A \gamma(\alpha)\mu(d\alpha) :\; \gamma(\alpha)\right . \\ 
\left . \qquad \qquad\qquad \qquad \; \mu\mathrm{-measurable}\; \mathrm{selection} \; \mathrm{of}\; \tilde{\Gamma}(x,\cdot)\right\}
\end{array}.
\end{equation}

\section{The General Setting}\label{sec:general}
\label{intro}
Consider the optimal control problem \\
\begin{equation}\label{eqn:free}
(P)\;\begin{cases}
\mathrm{Minimize}\, W(T,x(T)) \\
\mathrm{over}\;\; T\geq t_0\;\; \mathrm{and}\;\; (x,u)\in AC([t_{0},T]; \mathbb{R}^{n})\times \mathcal{U}\;\;\mathrm{s.t.}\\
\dot{x}(t)\in F(t,x,u), \quad a.e.\ t\in[t_{0},T]\\
u(t)\in U\subset\mathbb{R}^m,\quad a.e.\ t\in[t_{0},T]\\
x(t_0)=x_0\in \mathbb{R}^n\\
(T,x(T))\in\mathrm{Gr}\,\T\subseteq\R^{1+n}
\end{cases}
\end{equation}
the data which comprise an initial time $t_0\in\R$, an initial state $x_0\in \mathbb{R}^{n}$, a cost function $W:\mathbb{R}^{1+n}\rightarrow \mathbb{R}$, a set $\mathcal{U}$ of measurable control functions $u$ defined on $[t_{0},+\infty)$ and taking values in a compact set $U\subset \mathbb{R}^m$, 
a controlled, non-empty multifunction $F:\mathbb{R}^{1+n}\times U\leadsto \mathbb{R}^n$ and a non-empty multifunction $\T:\R\leadsto \mathbb{R}^{n}$. We have used the symbol $AC([t_{0},T]; \mathbb{R}^{n})$ to denote the set of absolutely continuous functions from $[t_0, T]$ to $\mathbb{R}^n$. Notice that the multifunction $\T:\R\leadsto \mathbb{R}^{n}$ represents a moving target which has to be reached at the final time $T$. In particular, we will consider the case in which the controlled multifunction $F$ is defined as
\begin{equation}\label{eqn:F}
    F(t,x,u)=g(t,x,u)-\int_A{k(t,x,u,\alpha)\partial_x\varphi(x,\alpha)\mu(d\alpha)},
\end{equation}
where $A\subseteq\mathbb{R}^{\nu}$ is a given compact set, $k: \mathbb{R}^{1+n} \times U \times A \rightarrow \mathbb{R}^+$, $g:\mathbb{R}^{1+n}\times U\rightarrow \mathbb{R}^n$, $\varphi: \mathbb{R}^n\times A\rightarrow \mathbb{R}$ are given functions and $\mu$ is a finite Radon measure over $A$.  Sometimes, to emphasize the dependence on the initial condition, we will use $(P)_{(t_0,x_0)}$ to denote the optimal control problem $(P)$ with initial condition $x(t_0)=x_0$.
We shall assume the following standing assumptions \textbf{(SH)}:
\begin{itemize}
\item[$H_1$:] The maps $(t,x,u,\alpha)\mapsto k(t,x,u,\alpha)$, $(t,x,u)\mapsto g(t,x,u)$ and $(x,\alpha) \mapsto \varphi(x,\alpha)$  are  continuous.
\item[$H_2$:] For any compact sets $I\subset \mathbb{R}$, $K\subset \mathbb{R}^{n}$, there \textcolor{black}{exists a constant} $L>0$ such that
\begin{equation}
\begin{array}{llll}
\left| g(t,x,u) - g(s,y,u) \right| \leq L\left(|t-s|+|x -y| \right), \\[2mm]
\left| k(t,x,u,\alpha) - k(s,y,u,\alpha) \right| \leq L\left(|t-s|+|x -y| \right), \\[2mm]
\end{array}
\end{equation}
for every $(t,x)$, $(s,y)\in I\times K$, $u\in U$ and $\alpha \in A$.
\item[$H_3$:] There exist constants $C_1, C_2\geq0$ such that 
$$ 0\leq k(t,x,u,\alpha),  \, \left|g(t,x,u)\right| \leq C_1+C_2\left|x\right|,$$
for every  $(t,x,u,\alpha)\in \mathbb{R}\times \mathbb{R}^n\times U \times A$.
\item[$H_4$:] for each $\alpha\in A$, the mapping $x\mapsto \varphi(x,\alpha)$ is convex and globally Lipschitz continuous with constant $L_\varphi (\alpha)$, where $L_\varphi(\cdot)$ is a non-negative, $\mu$-integrable function.
\item[$H_5$:] the set-valued map $\bar{F}(t,x):=\cup_{u\in U}F(t,x,u)$ takes convex values for each $(t,x)\in  \mathbb{R}^{1+n}$.
\item[$H_6$:] the multifunction $\T:\R\leadsto \mathbb{R}^{n}$ has closed graph.
\item[$H_7$:] the function $W:\mathbb{R}^{1+n} \rightarrow \mathbb{R}$ is locally Lipschitz continuous in $\mathrm{Gr}\,\T+\e\B$, for some $\e>0$.
\end{itemize}

\section{Basic Properties of the Model}\label{sec:basic}

In this section, we will formally prove some important properties of the free time optimal control problem $(P)$. To this purpose, let us introduce the set-valued function
\begin{equation}\label{Mult_I}
    I(t,x,u)=\int_Ak(t,x,u,\alpha)\,\partial_x\varphi(x,\alpha)\mu(d\alpha).
\end{equation}
Let us also consider the set-valued map 
\begin{displaymath}
  \bar{F}(t,x)=\bigcup_{u\in U}\left\{g(t,x,u)-I(t,x,u)\right\} 
\end{displaymath}
for each $(t,x)\in \mathbb{R}^{1+n}$. The maps $F$ and $\bar{F}$ satisfy the following conditions:
\begin{proposition}\label{prop:properties}
\emph{Assume conditions $H_{1}$-$H_{4}$. Then the map $(t,x)\leadsto\bar{F}(t,x)$ is non-empty, compact and upper semi-continuous. Furthermore, for each $x\in \mathbb{R}^n$, the map $t\leadsto\bar{F}(t,x)$ is locally Lipschitz continuous and, for each $(t,u)\in\R\times U$, the map $x\to F(t,x,u)$ is locally OSL. In particular, for any compact set $I\times K \subset \mathbb{R}\times \mathbb{R}^n$ and for every $y_1=(t_1,x_1),\ y_2=(t_2,x_2)\in I\times K$, there exists a constant $L_{\bar{F}}$ such that
\begin{equation}\label{eqn:donchev_osl}
\begin{array}{lr}
\max_{v\in\bar{F}(y_1)}\left<v,x_1-x_2\right>-\\
\qquad \qquad -\max_{w\in\bar{F}(y_2)}\left<w,x_1-x_2\right> \leq L_{\bar{F}}|y_1-y_2|^2.
\end{array}
\end{equation}
}
\end{proposition}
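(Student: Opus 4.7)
My plan is to establish the four claims in sequence: non-emptiness, compactness and upper semi-continuity of $\bar F$; Lipschitz continuity in $t$; the local one-sided Lipschitz property in $x$; and finally the support-function estimate \eqref{eqn:donchev_osl}. For non-emptiness of $I(t,x,u)$, convexity and Lipschitz continuity of $\varphi(\cdot,\alpha)$ give $\partial_x\varphi(x,\alpha)\subseteq L_\varphi(\alpha)\bar{\mathbb{B}}$ with $\partial_x\varphi(x,\alpha)\neq\emptyset$ at every $x$, and a standard measurable selection theorem, combined with $L_\varphi\in L^1(A;\mu)$, produces a $\mu$-integrable selection. Compactness and convexity of $I(t,x,u)$ (and hence of $F(t,x,u)$) then follow from the usual properties of parameterized integrals of integrably bounded, closed, convex-valued measurable maps.

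For the upper semi-continuity of $\bar F$, I would verify the two criteria recalled at the end of Section \ref{sec:prel}. Local boundedness is immediate from $H_3$ together with the pointwise bound $|\partial_x\varphi|\leq L_\varphi(\alpha)$ and $L_\varphi\in L^1(A;\mu)$. For closedness of the graph, let $(t_n,x_n,v_n)\to(t,x,v)$ with $v_n=g(t_n,x_n,u_n)-\int k(t_n,x_n,u_n,\alpha)\xi_n(\alpha)\mu(d\alpha)$ and $\xi_n(\alpha)\in\partial_x\varphi(x_n,\alpha)$. Extracting $u_n\to u\in U$ by compactness of $U$, and passing to a limit of the selections in the spirit of Artstein's parameterized-integration theorem (using upper semi-continuity of $\partial_x\varphi(\cdot,\alpha)$, which holds because $\varphi(\cdot,\alpha)$ is convex and continuous), one identifies $v\in\bar F(t,x)$. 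The Lipschitz dependence on $t$ is the easiest part: at fixed $x$, a single selection $\xi(\alpha)\in\partial_x\varphi(x,\alpha)$ may be used in both $I(t_1,x,u)$ and $I(t_2,x,u)$, so the Hausdorff distance is controlled directly by applying $H_2$ to $k$ and $g$ and integrating $L_\varphi$ against $\mu$.

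The crucial step is the one-sided Lipschitz estimate, which I would establish by exploiting monotonicity of the subdifferential of a convex function. Given $v=g(t,x,u)-\int k(t,x,u,\alpha)\xi_x(\alpha)\mu(d\alpha)\in F(t,x,u)$ and $w\in F(t,y,u)$ built from some selection $\xi_y$, one computes
\[
\langle v-w,x-y\rangle \;=\; \langle g(t,x,u)-g(t,y,u),x-y\rangle + \int\bigl[k(t,y,u,\alpha)\langle\xi_y,x-y\rangle - k(t,x,u,\alpha)\langle\xi_x,x-y\rangle\bigr]\mu(d\alpha),
\]
and splits the bracketed integrand as $k(t,y,u,\alpha)\langle\xi_y-\xi_x,x-y\rangle+(k(t,y,u,\alpha)-k(t,x,u,\alpha))\langle\xi_x,x-y\rangle$. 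The first summand is non-positive because $k\geq 0$ and monotonicity of $\partial_x\varphi(\cdot,\alpha)$ gives $\langle\xi_y-\xi_x,x-y\rangle\leq 0$; the second is bounded by $LL_\varphi(\alpha)|x-y|^2$ using $H_2$ and $|\xi_x|\leq L_\varphi(\alpha)$. Combined with the Lipschitz continuity of $g$, this yields OSL with constant $L(1+\|L_\varphi\|_{L^1(\mu)})$, uniformly for $u\in U$. To upgrade to \eqref{eqn:donchev_osl}, I would use the identity that the support function of $\bar F(y)$ at $p$ equals $\sup_{u\in U}$ of those of $F(y,u)$, together with the elementary inequality $\sup_u f(u)-\sup_u g(u)\leq \sup_u(f(u)-g(u))$: choosing (by compactness of $U$ or an $\varepsilon$-argument) a control $u^\ast$ that realizes the first maximum reduces \eqref{eqn:donchev_osl} to a combined OSL-in-$x$ and Lipschitz-in-$t$ estimate for $F(\cdot,\cdot,u^\ast)$, which is exactly what the previous paragraphs provide.

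The main obstacle I anticipate is the closed-graph step: showing that a limit of $\mu$-measurable selections $\xi_n(\alpha)\in\partial_x\varphi(x_n,\alpha)$ lies in $\partial_x\varphi(x,\alpha)$ for $\mu$-a.e.\ $\alpha$ requires a weak-compactness / selection principle, and it is precisely here that integrability of $L_\varphi$ (not mere boundedness in $\alpha$) is essential. Everything else is essentially bookkeeping once the monotonicity structure of the convex subdifferential and the integrability of $L_\varphi$ have been brought to bear.
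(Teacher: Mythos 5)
Your proposal is correct and follows essentially the same structure as the paper's proof: measurable selection for non-emptiness, closed graph plus local boundedness (via weak $L^1(A;\mu)$-compactness of selections, as in the paper) for upper semi-continuity, Lipschitz-in-$t$ via reusing a common selection of $\partial_x\varphi(x,\cdot)$, the convexity-driven one-sided Lipschitz estimate, and the intermediate-point/support-function argument for \eqref{eqn:donchev_osl}. The only difference is cosmetic: in the OSL step you invoke monotonicity of $\partial_x\varphi(\cdot,\alpha)$ to eliminate the cross term, whereas the paper uses the subgradient inequality to produce $\varphi$-value differences and then applies the Lipschitz bounds on $k$ and $\varphi$; both are equivalent convexity manipulations that yield the same constant $L\bigl(1+\int_A L_\varphi\,d\mu\bigr)$.
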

\begin{proof}{\emph{Proof.}}
In view of the hypothesis $H_2$-$H_4$ on $\varphi$, one has that $\partial_x \varphi(x,\alpha)$ is non-empty, bounded by $L_\varphi(\alpha)\mathbb{B}$ and convex for each $(x,\alpha)\in \mathbb{R}^n \times A$ . The continuity of $\varphi$ with respect to $\alpha$ ensures that the graph of $\alpha\mapsto\partial_x \varphi(x,\alpha)$ is closed. Therefore, the map $\alpha \mapsto\partial_x \varphi(x,\alpha)$ admits a $\mu$-measurable selection  for each $x\in \mathbb{R}^n$  (see, e.g., Theorem 2.3.11 \cite{vinter2010optimal}) and
$\bar{F}(t,x)$ is non-empty for each $(t,x)\in \mathbb{R}^{1+n}$.  
Furthermore, $\bar{F}$ is locally bounded in view of $H_3$-$H_4$. 
Since $U$ is compact and in view of $H_1$-$H_2$, one can prove that the mapping $(t,x)\mapsto I(t,x,u)$ has closed graph for each $u\in U$. \textcolor{black}{Indeed}, fix $u\in U$, take $(t_k, x_k) \in \mathbb{R}^{1+n}$  converging to $(t,x)$ and $v_k \in I(t_k,x_k,u)$ for each $k\in \mathbb{N}$, converging to some $v$. We need to show that $v\in I(t,x,u)$. It follows from the definition of parameterized integration that
\begin{equation}
\begin{array}{lr}
I(t_k, x_k, u)=\left\{\int_A k(t_k,x_k,u,\alpha) \xi_k(\alpha) \mu(d\alpha) :\; \right . \\[2mm]
\left . \qquad\qquad\quad  \xi_k(\alpha)\; \mu\mathrm{-measurable}\; \mathrm{selection} \; \mathrm{of}\; \partial_x \varphi(x_k,\cdot)\right\}.
\end{array}
\end{equation}
Hence, \textcolor{black}{for any $\mu$-measurable sequence $\xi_k(\alpha)\in \partial_x \varphi(x_k,\alpha)$, there exists a subsequence (we do not relabel) which} weakly converges in $L^1(A;\mu)$ to a $\mu$-measurable selection $\xi(\alpha) \in \partial_x \varphi(x,\alpha)$ (see, e.g., Theorem 1, pg. 125, \cite{castaing2006convex}). Furthermore, in view of $H_2$, one easily obtains
\begin{equation}\label{eqn:K_Lipsch}
\max_{U\times A} \left|k(t_k,x_k,u,\alpha)-k(t,x,u,\alpha)\right|\leq L\left(|t_k-t|+|x_k-x|\right).
\end{equation}
Call
 $\epsilon_k= \int_Ak(t,x,u,\alpha)\left(\xi_k(\alpha)-\xi(\alpha)\right) \mu(d\alpha)$ 
and observe that, $\epsilon_k\rightarrow 0$ since  $\xi_k(\alpha)$ weakly converges in $L^1(A;\mu)$ to $\xi(\alpha)$.
In particular in view of \eqref{eqn:K_Lipsch}, one easily obtains
\begin{equation}\label{steps2}
\begin{array}{lll}
\int_A k(t_k,x_k,u,\alpha) \xi_k(\alpha) \mu(d\alpha)\in \int_A k(t,x,u,\alpha) \xi(\alpha)\mu(d\alpha)\\[2mm]
+\left(|\epsilon_k| + L\bigg(1+\int_A L_\varphi(\alpha)\mu(d\alpha)\bigg)\left(|t_k-t|+|x_k-x|\right)\right)\bar{\mathbb{B}} \\[2mm]
\qquad\qquad\qquad \qquad \qquad \qquad \qquad\qquad\subset I(t,x,u)+|\tilde\epsilon_k|\bar{\mathbb{B}}
\end{array}
\end{equation}
for some \textcolor{black}{$\tilde{\epsilon}_k\rightarrow 0$} when $k\rightarrow\infty$, which implies that $(t,x)\leadsto I(t,x,u) $ has closed graph for each $u\in U$. Since $I(t,x,u)$ is also locally bounded in view of $H_3$-$H_4$, one has that the map  $(t,x)\leadsto I(t,x,u) $ is upper semi-continuous for each $u\in U$. 
\textcolor{black}{Furthermore, it is then straightforward to} prove that $F(t,x,u)$ is upper semi-continuous for each $u\in U$. \\
\textcolor{black}{It follows from the upper-semicontinuous property of $(t,x)\leadsto F(t,x,u)$ for each $u\in U$ that also $(t,x)\leadsto \bar{F}(t,x)$ is upper-semicontinous. Indeed,} for each $u\in U$ fixed, for any $(t,x)\in \mathbb{R}^{1+n}$ and for every neighborhood $\mathcal{N}_u$ of $F(t,x,u)$, there exists a neighborhood $\mathcal{O}_u$ of $(t,x)$ such that  $F(s,y,u)\subset \mathcal{N}_u$ for any $(s,y)\in \mathcal{O}_u$. Let us now observe that $\mathcal{N}:=\cup_{u\in U} \mathcal{N}_u$ can be regarded as an open arbitrary neighborhood of $\bar{F}(t,x)$ and that $\bar{F}(s,y) \subseteq \mathcal{N}$, for every $(s,y)\in \mathcal{O}:=\cup_{u\in U} \mathcal{O}_u$. This shows that $\bar{F}$ is upper semi-continuous.
Furthermore, the local Lipschitz continuity of the map $t\leadsto \bar{F}(t,x)$ easily follows from the local Lipschitz continuity conditions expressed in $H_2$.\\ 
Let us now show that, for each $(t,u)\in\R\times U$, $F$ is locally one-sided Lipschitz w.r.t. $x\in \mathbb{R}^n$. Fix any compact sets $I\subset \mathbb{R}$, $K\subset  \mathbb{R}^{n}$ and any $(t,x,u),(t,y,u)\in I\times K\times U$.
For every $v\in F(t,x,u)$, $w\in F(t,y,u)$ there exist measurable selections $\eta_x(\alpha)\in\partial_x\varphi(x,\alpha)$ and $\eta_y(\alpha)\in\partial_y\varphi(y,\alpha)$, $\mu$-a.a. $\alpha\in A$, such that
\begin{align*}
    &v=g(t,x,u)-\int_A{k(t,x,\alpha,u)\eta_x(\alpha)\mu(d\alpha)},\\
    &w=g(t,y,u)-\int_A{k(t,y,\alpha,u)\eta_y(\alpha)\mu(d\alpha)}.
\end{align*}
Therefore, one can derive the following inequalities:
\begin{equation}\label{OSL}
\begin{array}{rr}
    \left<x-y,v-w\right>\leq L|x-y|^2+&\\[2mm]
    \int_A{k(t,y,u,\alpha)\left<\eta_y(\alpha),x-y\right>\mu(d\alpha)}+&\\[2mm]
    \int_A{k(t,x,u,\alpha)\left<\eta_x(\alpha),y-x\right>\mu(d\alpha)}\leq&\\[2mm]
    L|x-y|^2+\int_Ak(t,y,u,\alpha)(\varphi(x,\alpha)-\varphi(y,\alpha))\mu(d\alpha)-\\[2mm]
    \int_Ak(t,x,u,\alpha)(\varphi(x,\alpha)-\varphi(y,\alpha))\mu(d\alpha)\leq L|x-y|^2+&\\[2mm]
     +|x-y|\int_A{L_\varphi(\alpha)|k(t,y,u,\alpha)-k(t,x,u,\alpha)|\mu(d\alpha)}&\\[2mm]
    \leq L\bigg(1+\int_A L_\varphi(\alpha)\mu(d\alpha)\bigg)|x-y|^2=L_F|x-y|^2,
\end{array}
\end{equation}
for each $x,y\in K$, $t\in I$, $u\in U$, where, in turns, we have used the characterization \eqref{def:subdiff} of the proximal sub-differential, hypotheses $H_2$, $H_4$ and the positivity of $k$. This shows that, for each $t\in \mathbb{R}$, $u\in U$, $F$ is locally OSL w.r.t. $x$.\\
In order to prove \eqref{eqn:donchev_osl}, fix any  $(t_1,x_1), (t_2, x_2) \in I\times K$. Let  $u\in U$ and  $v_1\in F(t_1,x_1,u)$ be such that 
$$\max_{v\in \bar{F}(t_1, x_1)}\left< v, x_1-x_2\right>=\left<v_1, x_1-x_2\right>.$$
 Fix any $v_2\in F(t_2,x_2,u)$ and choose $w\in F(t_1,x_2,u)$ such that 
$$|w-v_2|\leq L|t_1-t_2|. $$
Then one can easily estimate 
\begin{align*}
    &\max_{v\in\bar{F}(t_1, x_1)}\left< v, x_1-x_2\right>-\max_{w\in\bar{F}(t_2, x_2)}\left< w, x_1-x_2\right>\leq\\
    &\left<v_1,x_1-x_2\right>-\left<v_2,x_1-x_2\right>=\\
    &\left<v_1-w,x_1-x_2\right>+\left<w-v_2,x_1-x_2\right>\leq\\
    &L_F|x_1-x_2|^2+|w-v_2||x_1-x_2|\leq\\ &L_F|x_1-x_2|^2+L|t_1-t_2||x_1-x_2|\leq\\
    &(L_F+L)|(t_1,x_1)-(t_2,x_2)|^2=
    L_{\bar{F}}|(t_1,x_1)-(t_2,x_2)|^2,
\end{align*}
where $L_{\bar{F}}=(L_F+L)$. This shows relation \eqref{eqn:donchev_osl} and concludes the proof.
\end{proof}

Let us now consider the control system
\begin{equation}\label{eqn:dynamics}
\begin{cases}
\dot{x}(t)\in F(t,x(t),u(t)), \; u\in \mathcal{U},\; \mathrm{a.e.}\ t\in[t_{0},+\infty)\\[1mm]
x(t_0)=x_0\in \mathbb{R}^n
\end{cases} .
\end{equation}
\begin{remark}\label{rem:traj}
Notice that, as a consequence of the one-sided Lipschitz property \eqref{eqn:donchev_osl}, for every $T>t_2\geq t_1$, for every $x_1, x_2$ such that $|x_1|,|x_2|\leq r$ and for every solution of \eqref{eqn:dynamics} $x_1(\cdot),\ x_2(\cdot)$, respectively starting from \textcolor{black}{$x_1(t_1)=x_1$, $ x_2(t_2)=x_2$} with a given control $u\in \mathcal{U}$, one has
\begin{equation}\label{Lipschitz}
|x_1(t)-x_2(t)|\leq e^{L_{\bar F}(t-t_2)}|x_1(t_2)-x_2(t_2)|,
\end{equation}
 for all $t\in [t_2, T]$. Let us also observe that $x_i(\cdot)$ for $i=1,2$, satisfies the bound
 \begin{equation}\label{Gronwall_Hyp}
 \left|x_i(t)-x_i\right|\leq \left(1+\int_A L_\varphi(\alpha)\mu(d\alpha) \right)\int_{t_i}^t \left(C_1+C_2 |x_i(s)|\right)ds 
\end{equation}
where $C_1, C_2$ and $L_\varphi(\alpha)$ are defined in $H_3$-$H_4$, respectively.
It follows from \eqref{Lipschitz}  and \eqref{Gronwall_Hyp} that
\begin{equation}\label{Lipschitz_2}
\begin{array}{lll}
|x_1(t)-x_2(t)|\leq e^{L_{\bar F}(t-t_2)}\left(|x_1(t_2)-x_1|+ |x_1-x_2| \right) \\[2mm]
\leq e^{L_{\bar F}(t-t_2)}\left( C_r(T)|t_1-t_2|+|x_1-x_2| \right)\\
\leq \lambda_{r}(t)|(t_1,x_1)-(t_2,x_2)|,
\end{array}
\end{equation}
where $C_r(t):=[(L_{F}/L)(C_1+C_2r)]\mathrm{exp}\left\{ (L_{F}/L)(t-t_1)\right\}$ is obtained from \eqref{Gronwall_Hyp} and a use of the \textcolor{black}{Gr\"onwall's} Lemma, $L>0$ is the constant appearing in $H_2$ and $\lambda_r(t):=2e^{L_{\bar{F}}t}\max\left\{C_r(t),1\right\}>1$.
\end{remark}

An important consequence of Proposition \ref{prop:properties} is that the control system \eqref{eqn:dynamics}
is well-posed, as it is stated in the following result. 
\begin{theorem}\label{prop:exist_unique}
\emph{ Assume the hypotheses $H_1$-$H_5$. For a given $ (t_0,x_0)\in \mathbb{R}^{1+n}$ and $u\in \mathcal{U}$, there exists a unique solution to \eqref{eqn:dynamics}}.
\end{theorem}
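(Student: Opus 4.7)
The plan is to handle existence and uniqueness separately, using Proposition \ref{prop:properties} as the main engine.

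For \textbf{existence}, fix $u(\cdot)\in \mathcal{U}$ and consider the time-dependent multifunction $G(t,x):=F(t,x,u(t))$. I would first observe that $G(t,x)$ is nonempty, convex and compact: convexity comes from the fact that $F(t,x,u)$ equals a single point minus the convex set $I(t,x,u)$ (the integral of the convex-valued $\partial_x\varphi(\cdot,\alpha)$), while compactness follows from the bound $|\partial_x\varphi(x,\alpha)|\leq L_\varphi(\alpha)$ together with $H_3$. Next I would verify the two standard Carath\'eodory-type hypotheses: (a) for a.e.\ $t$, $x\mapsto G(t,x)$ is upper semi-continuous, which is immediate because the closed-graph argument in the proof of Proposition \ref{prop:properties} was uniform in $u\in U$; (b) $t\mapsto G(t,x)$ is measurable, which follows from the continuity of $(t,\alpha)\mapsto k(t,x,u,\alpha)$ and $(t,u)\mapsto g(t,x,u)$ together with the measurability of $u(\cdot)$, via a standard measurable-selection argument applied to $\alpha\mapsto \partial_x\varphi(x,\alpha)$. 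The linear growth $|G(t,x)|\leq (1+\int_A L_\varphi(\alpha)\mu(d\alpha))(C_1+C_2|x|)$ inherited from $H_3$-$H_4$ then yields, via Gr\"onwall, an a priori bound on any $[t_0,T]$. A standard existence theorem for differential inclusions with upper semi-continuous, convex-compact valued right-hand side (for instance, Theorem 2.4.3 in \cite{vinter2010optimal}) then delivers an absolutely continuous solution on every $[t_0,T]$, hence a global one.

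For \textbf{uniqueness}, suppose $x_1(\cdot)$ and $x_2(\cdot)$ are two solutions of \eqref{eqn:dynamics} with the same control $u(\cdot)$ and the same initial datum $x_0$. Fix $T>t_0$; by the Gr\"onwall-type bound recorded in Remark \ref{rem:traj}, both trajectories stay in a compact set $K\subset\mathbb{R}^n$ on $[t_0,T]$. Setting $\phi(t):=\tfrac{1}{2}|x_1(t)-x_2(t)|^2$, we have, a.e.,
\begin{equation}
\dot{\phi}(t)=\langle \dot{x}_1(t)-\dot{x}_2(t),\,x_1(t)-x_2(t)\rangle.
\end{equation}
Since $\dot{x}_i(t)\in F(t,x_i(t),u(t))$ and $x\mapsto F(t,x,u(t))$ is locally one-sided Lipschitz on $K$ with constant $L_F$ (Proposition \ref{prop:properties}), we obtain $\dot{\phi}(t)\leq 2L_F\,\phi(t)$ a.e. Gr\"onwall's inequality and $\phi(t_0)=0$ then give $\phi\equiv 0$ on $[t_0,T]$, and since $T$ was arbitrary, $x_1\equiv x_2$.

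The main obstacle is the \textbf{measurability verification} in step (b) above: because $u(\cdot)$ is only Lebesgue measurable, the composed field $G(t,x)$ is not jointly continuous in $(t,x)$, so one cannot simply invoke continuity-based existence theorems. The fix is to apply a measurable selection theorem to the parametrized set $\partial_x\varphi(x,\alpha)$ to rewrite the integral defining $I$ as a Lebesgue integral of a jointly measurable integrand, and then deduce that $G$ is $\mathcal{L}\otimes\mathcal{B}$-measurable in $(t,x)$; after that the upper semi-continuity in $x$ suffices for the chosen existence theorem. Everything else is routine given Proposition \ref{prop:properties} and Remark \ref{rem:traj}.
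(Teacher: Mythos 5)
Your proof is correct and takes essentially the same route as the paper: existence from the structural properties of $F$ established in Proposition~\ref{prop:properties} combined with a standard existence theorem for differential inclusions, and uniqueness from the one-sided Lipschitz property together with Gr\"onwall's Lemma (the paper cites this via the estimate \eqref{Lipschitz_2}, which is the same consequence of OSL that your $\phi(t)=\tfrac12|x_1(t)-x_2(t)|^2$ argument recovers). The paper's own proof is terse and simply references the general theory; you usefully make explicit the Carath\'eodory-type verification, in particular the measurability of $t\mapsto F(t,x,u(t))$ forced by $u(\cdot)$ being merely measurable, which the paper glosses over.
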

\begin{proof}{\emph{Proof.}}
The existence of a solution follows from the properties stated in Proposition \ref{prop:properties} and the use of well-known existence results for differential inclusions (see, e.g. \cite{lojasiewicz1985some}). Moreover,  the uniqueness property for the system \eqref{eqn:dynamics} follows from \eqref{Lipschitz_2} and an application of the \textcolor{black}{Gr\"onwall's} Lemma.
\end{proof}

Furthermore, one can show that the set of trajectories generated by the dynamics \eqref{eqn:dynamics} is equivalent to the set of solutions of
\begin{equation}\label{eqn:bar_F}
\begin{cases}
\dot{x}(t)\in \bar{F}(t,x(t)),\quad \mathrm{a.e.}\ t\in[t_{0},+\infty)\\[1mm]
x(t_0)=x_0\in \mathbb{R}^n
\end{cases}.
\end{equation}
One has the following result:
\begin{proposition}\label{prop:equivalence}
\emph{Let us assume $H_1$-$H_5$. Fix $(t_0,x_0)\in \mathbb{R}^{1+n}$. Then the set of solutions of \eqref{eqn:dynamics} with initial condition $x(t_0)=x_0$ is equal to the set of solutions of \eqref{eqn:bar_F} with initial condition $x(t_0)=x_0$.}
\end{proposition}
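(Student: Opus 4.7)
The inclusion ``every solution of \eqref{eqn:dynamics} solves \eqref{eqn:bar_F}'' is immediate from the definition $\bar F(t,x)=\bigcup_{u\in U}F(t,x,u)$: if $\dot x(t)\in F(t,x(t),u(t))$ for a.e.\ $t$ and some $u\in\mathcal{U}$, then \emph{a fortiori} $\dot x(t)\in\bar F(t,x(t))$ for a.e.\ $t$. The substance of the proposition is therefore the converse, on which I would concentrate.

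Given any solution $x(\cdot)$ of \eqref{eqn:bar_F} on $[t_0,+\infty)$, I would introduce the multifunction
\begin{equation*}
M(t):=\{u\in U:\ \dot x(t)\in F(t,x(t),u)\},
\end{equation*}
which is non-empty for a.e.\ $t$ because $\dot x(t)\in\bar F(t,x(t))=\bigcup_{u\in U}F(t,x(t),u)$. My plan is to invoke a Filippov-type measurable selection theorem (for instance Theorem~2.3.13 in \cite{vinter2010optimal}) to produce a measurable selector $u(t)\in M(t)$ for a.e.\ $t$, extended to some fixed $u_0\in U$ on the exceptional null set. The resulting $u$ lies in $\mathcal{U}$ and exhibits $x(\cdot)$ as a solution of \eqref{eqn:dynamics}, yielding the reverse inclusion.

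To apply the selection theorem I would verify (i) measurability of $t\mapsto F(t,x(t),u)$ with closed values for each fixed $u\in U$, and (ii) upper semi-continuity with closed values of $u\mapsto F(t,x(t),u)$ for each fixed $t$. Item~(i) follows from the continuity of $g$ and $k$ in $(t,x,u)$ from $H_1$, the absolute continuity of $x(\cdot)$, and the parametrized integration machinery developed in Proposition~\ref{prop:properties}. Item~(ii) is proved by replaying the closed-graph argument of Proposition~\ref{prop:properties} in the variable $u$: if $u_k\to u$ and $v_k\in F(t,x(t),u_k)$ converges to $v$, then $v_k=g(t,x(t),u_k)-\int_A k(t,x(t),u_k,\alpha)\xi_k(\alpha)\mu(d\alpha)$ for some $\mu$-measurable $\xi_k(\cdot)\in\partial_x\varphi(x(t),\cdot)$ bounded by $L_\varphi$; extracting a weakly $L^1(A;\mu)$-convergent subsequence as in Theorem~1, p.~125 of \cite{castaing2006convex} and exploiting the continuity of $k$ and $g$ in $u$ from $H_1$ gives $v\in F(t,x(t),u)$. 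The main obstacle I anticipate is not any individual step but the careful measurability bookkeeping needed to guarantee that the graph of $M$ is $(\mathrm{Lebesgue}\otimes\mathrm{Borel})$-measurable, which is the standard hypothesis under which the Filippov-type selection theorem applies; once (i) and (ii) are in place, this follows from routine product-measurability considerations and closes the proof.
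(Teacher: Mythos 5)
Your proof is correct, but it takes a genuinely different route from the paper's. The paper does not select a control directly: it introduces the multifunction $\Sigma(t)$ of pairs $(\xi(\cdot),u)\in L^1(A;\mu)\times U$, with $\xi$ a $\mu$-measurable selection of $\partial_x\varphi(x(t),\cdot)$, equips $L^1(A;\mu)$ with its weak topology, checks that $\Sigma$ has weakly closed graph and that $\tilde g(t,u,\xi)=g(t,x(t),u)-\int_A k(t,x(t),u,\alpha)\xi(\alpha)\mu(d\alpha)$ is weakly continuous, and then applies the selection theorem of \cite{castaing2006convex} (Theorem III.38) to produce simultaneously a measurable control $u(t)$ and a measurable subgradient selection $\xi(t)$ with $\dot x(t)=\tilde g(t,u(t),\xi(t))$. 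Your argument instead stays in the finite-dimensional control set, applying a Filippov-type selection to $M(t)=\{u\in U:\dot x(t)\in F(t,x(t),u)\}$, which indeed suffices because a solution of \eqref{eqn:dynamics} only requires set membership, not an explicit $\xi$. One caveat on your final step: conditions (i) and (ii) as you state them (measurability in $t$ for fixed $u$, upper semicontinuity in $u$ for fixed $t$) do not by themselves yield $\mathcal{L}\otimes\mathcal{B}$-measurability of $\mathrm{Gr}\,M$, since a function measurable in $t$ and lower semicontinuous in $u$ need not be jointly measurable; what saves you here is that the closed-graph argument of Proposition \ref{prop:properties} works jointly in $(t,x,u)$ (using continuity of $g$ and $k$ in $u$ from $H_1$), so $(t,u)\leadsto F(t,x(t),u)$ is upper semicontinuous with compact values, $(t,u,v)\mapsto d\bigl(v,F(t,x(t),u)\bigr)$ is lower semicontinuous hence Borel, and composing with the measurable map $t\mapsto\dot x(t)$ gives the needed product measurability of $\mathrm{Gr}\,M=\{(t,u):d(\dot x(t),F(t,x(t),u))=0\}$. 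With that strengthening your proof closes: it buys a more elementary, finite-dimensional selection argument, whereas the paper's route avoids the Filippov-type measurability bookkeeping and additionally delivers a measurable selection of $\partial_x\varphi(x(t),\cdot)$, at the price of working with the weak topology on $L^1(A;\mu)$.
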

\begin{proof}{\emph{Proof.}} If $x(\cdot)$ is a solution of \eqref{eqn:dynamics} with initial condition $x(t_0)=x_0$, then it is trivially also a solution of  \eqref{eqn:bar_F} with the same initial condition. Let us now take $x(\cdot)$ solution of \eqref{eqn:bar_F} such that $x(t_0)=x_0$. In what follows, we will equip $L^{1}(A; \mu)$ 
with its natural weak topology. Let us consider the multifunction $\Sigma:[t_0,\infty)\leadsto L^1(A; \mu)\times U$ defined as
\[
\begin{array}{lr}
\Sigma(t):=\left\{(\xi(\cdot),u):\; u\in U,\; \xi(\cdot)\right .\\ 
\left . \qquad \qquad\qquad\; \mu\mathrm{-measurable}\; \mathrm{selection} \; \mathrm{of}\; \partial_{x}\varphi(x(t),\cdot)\right\}
\end{array}
\]
and the mapping $\tilde{g}:[t_0,\infty)\times U\times L^{1}(A; \mu)\rightarrow \mathbb{R}^n$ defined as
$$\tilde{g}(t,u,\xi):=g(t,x(t),u)-\int_A k(t,x(t),u,\alpha)\xi(\alpha)\mu(d\alpha).$$
It is a straightforward matter to check that $\Sigma$ is non-empty (in view of $H_4$) and has weakly closed graph (in view of the compactness of $U$ and of the upper-semicontinuity and convexity of the sub-differential). Furthermore, in view of $H_1, H_2$, the map $\tilde{g}$ is weakly continuous. Notice also that the relation
$$\dot{x}(t)\in \{\tilde{g}(t,u,\xi):\quad (u,\xi)\in \Sigma(t) \},\qquad \mathrm{a.e.}\;t\in[t_0,\infty),$$
is clearly satisfied.
 So one can apply a well-known selection theorem (see, e.g. Theorem III.38, \cite{castaing2006convex}), which provides the existence of a measurable selection $(\xi(t), u(t))\in \Sigma(t)$ such that $\dot{x}(t)=\tilde{g}(t,u(t),\xi(t))$, a.e. $t\in[t_0,\infty)$. This concludes the proof.
\end{proof}
\section{Existence of Minimizers and Properties of the Value Function}\label{sec:Existence}

Fix $(t_0,x_0)\in \R^{1+n}$. Let us now define the reachable set generated by the dynamics \eqref{eqn:bar_F} and starting from the point $x(t_0)=x_0$, evaluated at $s\geq t_0$ (in view of Proposition \ref{prop:equivalence}, one can regard any trajectory of \eqref{eqn:bar_F} as a trajectory of \eqref{eqn:dynamics} and vice-versa):
$$R(s;t_0,x_0)=\left\{x(s):\, \dot{x}\in\bar{F}(t,x),\ t\in[t_0,s],\ x(t_0)=x_0\right\}.$$
The set of points of $\mathrm{Gr}\,\mathcal{T}$ reached by a trajectory of \eqref{eqn:dynamics} starting from $x(t_0)=x_0$ is defined as  
$$\mathcal{A}_{(t_0,x_0)}=\left\{(s,y)\in \mathrm{Gr}\,\T:\, y\in R(s;t_0,x_0),\ s\geq t_0\right\},$$
while the set of initial conditions for which a feasible trajectory exists is denoted by 
$$\mathcal{D}=\left\{(t_0,x_0)\in\R^{1+n}:\, \mathcal{A}_{(t_0,x_0)}\neq\emptyset\right\}.$$
In order to guarantee the existence of a minimizer, one has to assume further conditions, characterizing the behaviour  of the cost function $W$ when the end-time $T>t_0$ tends to infinity.  
In what follows, we will assume the following growth condition:
\begin{itemize}
\item[\textbf{(GC)}]
Fix $(t_0,x_0)\in \R^{1+n}$. For every $(T_k,x_k)\in \A_{(t_0,x_0)}$ such that $T_k\rightarrow +\infty$, one has that $W(T_k,x_k)\rightarrow +\infty$.
\end{itemize}
Clearly, if $W$ is a function such that $\sup_{x\in\mathbb{R}^n }W(T,x) \rightarrow \infty$ if $T\rightarrow \infty$, then the condition \textbf{(GC)} is satisfied. Let us point out that, in the minimum time problem, the cost function is $W(T,x)=T$ and  it clearly  satisfies the growth condition \textbf{(GC)}.
We are now ready to prove the existence of a minimizer for the optimal control problem $(P)$:
\begin{theorem}\label{prop:free}
\emph{Assume hypothesis \textbf{(SH)} and that condition \textbf{(GC)} is satisfied. Then, for any $(t_0,x_0)\in\D$, there exists a minimizer for the free time optimal control problem $(P)$}.
\end{theorem}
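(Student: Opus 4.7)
The plan is to apply the direct method of the calculus of variations to a minimizing sequence, working with the reduced inclusion $\dot{x}\in\bar{F}(t,x)$ and exploiting Proposition \ref{prop:equivalence} so that the control $u$ plays no role until the very end, where a measurable selection recovers it.

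First, since $(t_0,x_0)\in\D$ there exists at least one admissible triple, so $\inf (P)_{(t_0,x_0)}<+\infty$. Pick a minimizing sequence $(T_k,x_k(\cdot),u_k(\cdot))$ with $T_k\geq t_0$, $x_k\in AC([t_0,T_k];\R^n)$ solving \eqref{eqn:dynamics}, $(T_k,x_k(T_k))\in\mathrm{Gr}\,\T$, and $W(T_k,x_k(T_k))\to \inf (P)_{(t_0,x_0)}$. I would first use the growth condition \textbf{(GC)}: since $W(T_k,x_k(T_k))$ is bounded above, $T_k$ must be bounded, so along a subsequence $T_k\to T^{\star}\geq t_0$. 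Next I would use $H_3$ and $H_4$ together with Gr\"onwall's lemma applied to \eqref{Gronwall_Hyp} to obtain a uniform bound $|x_k(t)|\leq R$ on $[t_0,T^{\star}+1]$, where I have implicitly extended each $x_k$ to a solution on $[t_0,T^{\star}+1]$, which is possible by Theorem \ref{prop:exist_unique} applied to any control extending $u_k$.

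With the uniform bound $R$, $H_3$--$H_4$ imply that $\bar{F}(t,x)$ is uniformly bounded on $[t_0,T^{\star}+1]\times R\bar\B$, so the family $\{x_k\}$ is equi-Lipschitz and hence equicontinuous. By the Arzel\`a--Ascoli theorem I extract a further subsequence converging uniformly to some $x^{\star}\in AC([t_0,T^{\star}+1];\R^n)$ with $x^{\star}(t_0)=x_0$. Proposition \ref{prop:properties} furnishes the hypotheses of the standard closure theorem for differential inclusions (upper semi-continuity of $\bar F$, compact values, and convex values via $H_5$), so that $\dot x^{\star}(t)\in\bar F(t,x^{\star}(t))$ a.e.\ on $[t_0,T^{\star}+1]$. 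Restricting to $[t_0,T^{\star}]$, Proposition \ref{prop:equivalence} produces a measurable control $u^{\star}\in\mathcal{U}$ and a measurable selection of the parameterized sub-differential which realize $x^{\star}$ as a solution of \eqref{eqn:dynamics}.

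It remains to check the endpoint and cost. Uniform convergence gives $x_k(T_k)\to x^{\star}(T^{\star})$ since $|x_k(T_k)-x^{\star}(T^{\star})|\leq |x_k(T_k)-x_k(T^{\star})|+|x_k(T^{\star})-x^{\star}(T^{\star})|$, where the first term vanishes by equi-Lipschitzness and $T_k\to T^{\star}$, and the second by uniform convergence. Hence $(T_k,x_k(T_k))\to (T^{\star},x^{\star}(T^{\star}))$, and by the closed graph property $H_6$ we conclude $(T^{\star},x^{\star}(T^{\star}))\in\mathrm{Gr}\,\T$. Finally, since for large $k$ the points $(T_k,x_k(T_k))$ lie in $\mathrm{Gr}\,\T+\varepsilon\B$, the local Lipschitz continuity of $W$ granted by $H_7$ implies $W(T^{\star},x^{\star}(T^{\star}))=\lim_k W(T_k,x_k(T_k))=\inf(P)_{(t_0,x_0)}$, so $(T^{\star},x^{\star},u^{\star})$ is the sought minimizer.

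The main obstacle I expect is the free-time aspect: the trajectories live on varying intervals $[t_0,T_k]$, so care is needed in extending them to a common interval before applying Arzel\`a--Ascoli, and in combining uniform convergence with the convergence $T_k\to T^{\star}$ to identify the limit endpoint. All the heavy regularity of $\bar{F}$ needed for the closure argument is already in Proposition \ref{prop:properties}, and the recovery of the control through Proposition \ref{prop:equivalence} is what lets the direct-method argument close cleanly despite the averaged sub-differential in \eqref{eqn:F}.
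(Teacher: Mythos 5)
Your proof is correct and takes essentially the same route as the paper: bounding $T_k$ via \textbf{(GC)}, extending the minimizing sequence to a common interval, extracting a uniformly convergent subsequence by compactness, and passing to the limit using the closed graph of $\T$ ($H_6$) and the local Lipschitz continuity of $W$ ($H_7$). The paper compresses the Arzel\`a--Ascoli-plus-closure step into a citation of a compactness-of-trajectories result from \cite{vinter2010optimal} and leaves the recovery of the control (your use of Proposition \ref{prop:equivalence}) implicit, but the argument is the same in substance.
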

\begin{proof}{\emph{Proof.}}
Fix $(t_0,x_0)\in \D$.  Let $(T_n,x_n)_{n\in\mathbb{N}}$ be a minimising sequence in $ \A_{(t_0,x_0)}$. In particular, $(T_n,x_n)\in \mathrm{Gr}\,\T$ and $T_n$ has to be bounded. \textcolor{black}{Indeed}, if $T_n$ were not bounded, there would exist a subsequence such that $W(T_n,x_{n}(T_n))\rightarrow +\infty$, providing a contradiction with the definition of minimising sequence.  Let $M>0$ be such that $T_n\leq M$ for each $n$. In view of conditions $H_3$, $H_4$ and $H_5$, it follows from standard compactness arguments (see Proposition 2.5.3,  \cite{vinter2010optimal}) that $T_n\rightarrow T^*$ and $x_n(\cdot)\rightarrow x^{*}(\cdot)$ uniformly on $[t_0,M]$ (here, we are considering trajectories $x_n$, $x^*$ extended on $[t_0,M]$  such that $x_{n}(t)=x_n(T_n)$ for $T_n\leq t \leq M$ and $x^{*}(t)=x^{*}(T^*)$ for $T^*\leq t \leq M$). It follows from Proposition \ref{prop:properties} and assumptions $H_5$-$H_6$ that the set $R(s; t_0,x_0)$ is compact for every $t_0\leq s\leq M$ (see, e.g., Proposition $2.6.1$, \cite{vinter2010optimal}). Since $W$ is continuous \textcolor{black}{on} $\mathrm{Gr}\mathcal{T}$, this concludes the proof.
\end{proof}

Let us now introduce, for all $(t_0,x_0)\in \mathbb{R}^{1+n}$, the value function of the free time optimal control problem $(P)$ as
\begin{equation}\label{eqn:value-function}
V(t_0,x_0)=\inf\left\{W(T,x):\;(T,x)\in\mathcal{A}_{(t_0,x_0)}\right\}.
\end{equation}
Notice that $V(t_0,x_0)=\infty$ if $(t_0,x_0)\notin \mathcal{D}$. The standard dynamic programming principle for the optimal control problem $(P)$ can be stated as follows:
\begin{proposition}\label{prop:principle}
\emph{For any $(t,x)\in\mathcal{D}$, take  $y:[t,+\infty)\to\mathbb{R}^n$ such that $y(t)=x$ solution of \eqref{eqn:dynamics} with a control $u\in \mathcal{U}$. 
Then, for any $s\in [t,\infty)$ the value function satisfies}
\begin{displaymath}
  V(t,x)\leq V(s,y(s)).
\end{displaymath}
\emph{Furthermore, consider $\bar{y}:[t,+\infty)\to\mathbb{R}^n$ such that $(\bar{T}, \bar{y}(\cdot))$ is a minimizer for $(P)_{(t,x)}$. Then for any $t\leq s\leq \bar{T}$, one has} 
$$V(t,x)=V(s,\bar{y}(s)).$$
\end{proposition}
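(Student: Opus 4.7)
The plan is to prove the first (inequality) part by a concatenation-of-trajectories argument, and then to obtain the equality for a minimizer by combining the inequality with the trivial bound coming from restricting the optimal trajectory to the subinterval $[s,\bar T]$.

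For the inequality, fix $(t,x)\in\mathcal{D}$, a trajectory $y(\cdot)$ of \eqref{eqn:dynamics} on $[t,\infty)$ with $y(t)=x$ associated to some $u\in\mathcal{U}$, and $s\geq t$. If $V(s,y(s))=+\infty$ the claim is trivial, so assume $(s,y(s))\in\mathcal{D}$. Given $\varepsilon>0$, choose by definition of the value function a point $(T,z)\in\mathcal{A}_{(s,y(s))}$ together with a trajectory $z(\cdot)$ of \eqref{eqn:dynamics} on $[s,T]$, driven by some $u'\in\mathcal{U}$, with $z(s)=y(s)$, $(T,z(T))\in\mathrm{Gr}\,\mathcal{T}$ and $W(T,z(T))\leq V(s,y(s))+\varepsilon$. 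Define the concatenated control $\tilde u(r):=u(r)$ for $r\in[t,s]$ and $\tilde u(r):=u'(r)$ for $r\in(s,T]$; since $U$ is compact and $u,u'$ are measurable, $\tilde u\in\mathcal{U}$. The concatenated arc $\tilde y(r):=y(r)$ on $[t,s]$ and $\tilde y(r):=z(r)$ on $(s,T]$ is absolutely continuous and satisfies \eqref{eqn:dynamics} with control $\tilde u$ on $[t,T]$; uniqueness (Theorem on well-posedness) ensures this is a bona fide trajectory starting from $(t,x)$. Thus $(T,\tilde y(T))=(T,z(T))\in\mathcal{A}_{(t,x)}$, which gives $V(t,x)\leq W(T,z(T))\leq V(s,y(s))+\varepsilon$. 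Letting $\varepsilon\downarrow 0$ yields $V(t,x)\leq V(s,y(s))$.

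For the equality, let $(\bar T,\bar y(\cdot))$ be a minimizer of $(P)_{(t,x)}$, so that $V(t,x)=W(\bar T,\bar y(\bar T))$, and fix $s\in[t,\bar T]$. The first part of the statement, applied to $\bar y$, gives $V(t,x)\leq V(s,\bar y(s))$. Conversely, the restriction $\bar y\!\mid_{[s,\bar T]}$ is a feasible trajectory of \eqref{eqn:dynamics} from $(s,\bar y(s))$ reaching $(\bar T,\bar y(\bar T))\in\mathrm{Gr}\,\mathcal{T}$, so $(\bar T,\bar y(\bar T))\in\mathcal{A}_{(s,\bar y(s))}$ and therefore
\begin{equation*}
V(s,\bar y(s))\leq W(\bar T,\bar y(\bar T))=V(t,x).
\end{equation*}
Combining the two inequalities gives $V(t,x)=V(s,\bar y(s))$.

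I expect no serious obstacle here; the only point requiring care is that the concatenation really produces an admissible trajectory. This needs the measurable control space $\mathcal{U}$ to be closed under concatenation (immediate since measurability is preserved by pasting on disjoint intervals and the values stay in the compact set $U$), and the differential inclusion \eqref{eqn:dynamics} to be satisfied a.e. on $[t,T]$, which holds away from the single point $s$ on each half-interval and hence almost everywhere. Uniqueness from Theorem~\ref{prop:exist_unique} then guarantees that this concatenated arc coincides with the unique solution of \eqref{eqn:dynamics} driven by $\tilde u$ from $(t,x)$, making the reachability argument rigorous.
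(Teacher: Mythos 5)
The paper states Proposition~\ref{prop:principle} without proof (it is introduced as ``the standard dynamic programming principle'' and treated as well known), so there is no internal proof to compare against; your concatenation argument for the first inequality combined with the restriction argument for the minimizer equality is exactly the canonical way to establish it, and it is correct. The only cosmetic point is that invoking uniqueness from Theorem~\ref{prop:exist_unique} is not actually needed: it suffices that the pasted arc is absolutely continuous and satisfies the inclusion a.e.\ with the pasted control, which already makes $(T,z(T))$ an element of $\mathcal{A}_{(t,x)}$ via Proposition~\ref{prop:equivalence} and the definition of the reachable set.
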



If the growth condition \textbf{(GC)} on $W$ is satisfied, one can easily derive also a related growth condition on the value function.

\begin{proposition}\label{prop:gc}
\emph{Assume \textbf{(SH)} and that condition \textbf{(GC)} is satisfied. Then the following growth condition holds:
\begin{itemize}
\item[$\;(\textbf{GC})_V$] For every $(t_k, x_k)\in \D$ such that $t_k\rightarrow \infty$, one has\\ that $V(t_k,x_k)\rightarrow \infty$.
\end{itemize}}
\end{proposition}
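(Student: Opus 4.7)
I would argue by contradiction. Suppose $(\text{GC})_V$ fails: then, after extracting a subsequence, there exists a sequence $(t_k, x_k) \in \mathcal{D}$ with $t_k \to +\infty$ and $V(t_k, x_k) \leq M$ for some constant $M \in \mathbb{R}$.

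For each $k$, since $(t_k, x_k) \in \mathcal{D}$, Theorem \ref{prop:free} applied to $(P)_{(t_k, x_k)}$ produces a minimizer: a final time $T_k \geq t_k$ and an admissible trajectory $y_k : [t_k, T_k] \to \mathbb{R}^n$ with $y_k(t_k) = x_k$, $(T_k, y_k(T_k)) \in \mathrm{Gr}\,\mathcal{T}$, and $W(T_k, y_k(T_k)) = V(t_k, x_k) \leq M$. The inequality $T_k \geq t_k$ together with $t_k \to +\infty$ forces $T_k \to +\infty$. Hence $(T_k, y_k(T_k)) \in \mathcal{A}_{(t_k, x_k)}$ is a sequence of reachable target points with diverging final times along which $W$ remains uniformly bounded by $M$. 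Invoking the growth hypothesis (GC) on this sequence then yields $W(T_k, y_k(T_k)) \to +\infty$, contradicting the uniform bound by $M$ and establishing the proposition.

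The main obstacle I would expect is the last step, because (GC) is formulated relative to a \emph{fixed} initial datum $(t_0, x_0)$, whereas the reachable endpoints $(T_k, y_k(T_k))$ arise from varying initial data $(t_k, x_k)$. I would handle this by using the equivalent quantitative reformulation of (GC): for each $(t_0, x_0) \in \mathcal{D}$ and each threshold $M' > 0$ there exists $T_{M'}(t_0, x_0) < +\infty$ such that $W(T, z) \geq M'$ whenever $(T, z) \in \mathcal{A}_{(t_0, x_0)}$ and $T \geq T_{M'}(t_0, x_0)$. Reading this at $(t_0, x_0) = (t_k, x_k)$ and combining with $T_k \to +\infty$ yields the contradiction, provided $T_{M'}(t_k, x_k)$ can be controlled uniformly in $k$; this uniform control can be extracted from the linear growth bounds $H_3$--$H_4$ and the continuous dependence estimate \eqref{Lipschitz_2} in Remark \ref{rem:traj}, which together give a locally uniform bound on the diameter of the reachable sets over bounded time horizons.
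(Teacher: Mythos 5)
Your argument is essentially the paper's own proof: the paper likewise takes near-optimal target points $(T_{\varepsilon_k},y_{\varepsilon_k})\in\A_{(t_k,x_k)}$ (using $\varepsilon_k$-optimality from the definition of $V$ rather than exact minimizers from Theorem \ref{prop:free}, an immaterial difference), observes $T_{\varepsilon_k}\geq t_k\to\infty$, and concludes from \textbf{(GC)} that $W(T_{\varepsilon_k},y_{\varepsilon_k})\to\infty$, hence $V(t_k,x_k)\to\infty$. The uniformity issue you raise at the end is not treated in the paper: its proof applies \textbf{(GC)} directly to this sequence with varying initial data $(t_k,x_k)$, in effect reading \textbf{(GC)} as a condition on any sequence of reachable target points with diverging final times, so no quantitative threshold $T_{M'}(t_0,x_0)$ or uniform control via $H_3$--$H_4$ and \eqref{Lipschitz_2} is invoked there.
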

\begin{proof}{\emph{Proof.}}
Take $(t_k,x_k)\in \D$ such that $t_k\rightarrow \infty$. It follows from the definition of value function that for each $\varepsilon_k>0$, there exists $(T_{\varepsilon_k}, y_{\varepsilon_k})\in \A_{(t_k, x_k)}$ such that
\begin{equation}
W(T_{\varepsilon_k}, y_{\varepsilon_k})\leq V(t_k, x_k)+\varepsilon_k.
\end{equation}
Let us assume that $\varepsilon_k\rightarrow 0$. Since $T_{\varepsilon_k}\geq t_k$, then also $T_{\varepsilon_k}\rightarrow\infty$ for $k\rightarrow \infty$. It follows from the condition \textbf{(GC)} on $W$ that
\begin{equation}
\infty=\lim_{k\rightarrow \infty} W(t_{\varepsilon_k}, y_{\varepsilon_k})\leq \lim_{k\rightarrow \infty} V(t_k, x_k).
\end{equation}
This concludes the proof.
\end{proof}

In order to derive the Hamilton-Jacobi equation for the problem $(P)$, it will be helpful to impose conditions which guarantee the locally Lipschitz continuous regularity in $\mathcal{D}$ of the value function.
To this aim, we will extend to the one-sided Lipschitz case some results provided in \cite{veliov1997lipschitz}. Let us assume the following inward pointing condition on $\mathrm{Gr}\,\T$:
\begin{itemize}
    \item [\textbf{(IPC)}] For any compact set $G\subseteq\R^{1+n}$ there exists $\rho>0$ such that, for all $(t,x)\in\mathrm{Bd}\left(\mathrm{Gr}\,\T\right)\cap G$,
    \begin{align*}
        &\min_{\xi\in\bar{F}(t,x)}\left\{l^0+\left<l,\xi\right>\right\}\leq-\rho,\\
        &\qquad\qquad\qquad\quad\forall\ (l^0,l)\in N_{\mathrm{Gr}\T}^P(t,x),\ |(l^0,l)|=1.
    \end{align*}
\end{itemize}
It is then possible to prove the following technical result:
\begin{proposition}\label{prop:claim}
Assume $H_1$-$H_6$ hold and $\mathrm{Gr}\,\T$ satisfies \textbf{(IPC)}. Then, for any compact set $K\subseteq\R^{1+n}$, there exist $\varepsilon_K,\ L_K>0$ such that for all $(t_0,x_0)\in\mathrm{Gr}\,\T\cap K+\varepsilon_K\bar{\B}$
\begin{equation}\label{eqn:claim}
d((t_0,x_0),\A_{(t_0,x_0)})\leq L_Kd((t_0,x_0),\mathrm{Gr}\,\T)
\end{equation}
\end{proposition}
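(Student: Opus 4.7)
The plan is to combine the inward pointing condition \textbf{(IPC)} with the one-sided Lipschitz property of $\bar F$ from Proposition \ref{prop:properties} to construct, at each point $(t_0,x_0)$ close to $\mathrm{Gr}\,\T$, a trajectory of \eqref{eqn:bar_F} whose (linearized) distance to $\mathrm{Gr}\,\T$ decreases at a uniform rate of order $\rho$; an elementary iteration will then deliver a trajectory reaching $\mathrm{Gr}\,\T$ in time proportional to $d_0:=d((t_0,x_0),\mathrm{Gr}\,\T)$.

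First I enlarge $K$ to a compact $G\supset K+\bar{\B}$ inside which \textbf{(IPC)} provides a single constant $\rho>0$ and Proposition \ref{prop:properties} provides a uniform velocity bound $M_K:=\sup_{(t,x)\in G,\,v\in\bar F(t,x)}|v|$ and a common OSL constant $L_{\bar F}$. Using the Gronwall estimate of Remark \ref{rem:traj}, I fix $\varepsilon_K>0$ small enough that any trajectory of \eqref{eqn:bar_F} issued from $\mathrm{Gr}\,\T\cap K+\varepsilon_K\bar{\B}$ stays inside $G$ over a time interval of length $4\varepsilon_K/\rho$.

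Given $(t_0,x_0)\in\mathrm{Gr}\,\T\cap K+\varepsilon_K\bar{\B}$ with $d_0>0$ (otherwise there is nothing to show), I project onto $\mathrm{Gr}\,\T$ to obtain $(\bar t,\bar x)$ and set $(l^0,l):=\frac{1}{d_0}\bigl((t_0,x_0)-(\bar t,\bar x)\bigr)$, which is a unit proximal normal to $\mathrm{Gr}\,\T$ at $(\bar t,\bar x)\in\mathrm{Bd}(\mathrm{Gr}\,\T)\cap G$. By \textbf{(IPC)} there exists $\xi^*\in\bar F(\bar t,\bar x)$ with $l^0+\langle l,\xi^*\rangle\leq -\rho$. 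I then use a Filippov-type perturbation argument, based on the one-sided Lipschitz estimate \eqref{eqn:donchev_osl} and the measurable selection theorem invoked in the proof of Proposition \ref{prop:equivalence}, to construct a trajectory $y(\cdot)$ of \eqref{eqn:bar_F} with $y(t_0)=x_0$ and velocity satisfying $l^0+\langle l,\dot y(t)\rangle\leq -\rho/2$ on a short interval. The affine function $\phi(t):=l^0(t-\bar t)+\langle l,y(t)-\bar x\rangle$ then obeys $\phi(t_0)=d_0$ and $\dot\phi(t)\leq -\rho/2$, so there exists $\tau\leq t_0+2d_0/\rho$ with $\phi(\tau)=0$; the defining quadratic defect of the proximal normal places $(\tau,y(\tau))$ within distance $O(d_0^2)$ of $\mathrm{Gr}\,\T$.

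Iterating the construction starting from $(\tau,y(\tau))$ and summing the resulting geometric series yields a trajectory landing on $\mathrm{Gr}\,\T$ in total time $\leq Cd_0/\rho$, hence at distance at most $(M_K+1)Cd_0/\rho$ from $(t_0,x_0)$; the reached point belongs by construction to $\A_{(t_0,x_0)}$, and \eqref{eqn:claim} follows with $L_K:=(M_K+1)C/\rho$. The main obstacle is the Filippov-type step: \textbf{(IPC)} supplies the descent direction $\xi^*$ only at the single point $(\bar t,\bar x)$, whereas the descent estimate on $\phi$ must hold along an actual solution of the differential inclusion. Bridging this gap is where the OSL inequality \eqref{eqn:donchev_osl}, applied with $y_1=(\bar t,\bar x)$ and $y_2=(t,y(t))$, and the upper semicontinuity of $\bar F$ must be combined carefully, with $\varepsilon_K$ chosen small enough that the quadratic OSL error never eats the linear descent $\rho/2$.
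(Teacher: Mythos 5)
Your overall strategy (use \textbf{(IPC)} at the projection point, transfer the descent direction to the nearby point via the one-sided Lipschitz estimate and the Lipschitz dependence on $t$, then conclude by a Gr\"onwall-type argument that the trajectory hits $\mathrm{Gr}\,\T$ in time of order $d_0$ and read off $L_K$ from the velocity bound) is the right one, and its pointwise ingredient is exactly the paper's. But the step you yourself flag as ``the main obstacle'' is a genuine gap, not a technicality. You ask for a trajectory $y(\cdot)$ of \eqref{eqn:bar_F} with $l^0+\left<l,\dot y(t)\right>\leq-\rho/2$ for a \emph{fixed} direction $(l^0,l)$ over a short interval. The OSL inequality \eqref{eqn:donchev_osl} only compares velocities paired against the actual displacement $x_1-x_2$ between the two base points; it gives no control of $\left<l,\cdot\right>$ for a direction $l$ that is no longer parallel to $(t,y(t))-(\bar t,\bar x)$ once the trajectory moves, and a Filippov-type tracking lemma is precisely what is unavailable for merely OSL (non-Lipschitz) right-hand sides -- this is the difficulty the whole paper is organized around. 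A second gap: even granting $\phi(\tau)=0$, the proximal-normal inequality only bounds how far \emph{target} points protrude past the hyperplane through $(\bar t,\bar x)$; it does not imply that a point \emph{on} that hyperplane at distance $O(d_0)$ from $(\bar t,\bar x)$ is within $O(d_0^2)$ of $\mathrm{Gr}\,\T$. Hence the contraction needed for your geometric-series iteration is not established.

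The paper avoids both problems by never fixing the direction and never iterating: it introduces (Proposition \ref{prop:tech}) the multifunction $\Phi(t,x)$ of velocities $y\in\bar F(t,x)$ whose lower Dini derivative of $\psi(t,x)=d((t,x),\mathrm{Gr}\,\T)$ is at most $(L_F+L)\psi(t,x)-\rho$. Non-emptiness is proved pointwise: take the control $\bar u$ realizing \textbf{(IPC)} at the \emph{current} projection $(\bar t,\bar x)$ of $(t,x)$, and any $\xi\in F(t,x,\bar u)$; since the relevant normal $l$ is proportional to $x-\bar x$, the OSL estimate in $x$ (same $t$, same $u$) plus Lipschitz continuity in $t$ give $l^0+\left<l,\xi\right>\leq-\rho+(L_F+L)\,d((t,x),\mathrm{Gr}\,\T)$, which bounds the Dini derivative of the distance. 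Existence of a solution of $\dot x\in\Phi(t,x)$ then follows from the selection result of \cite{veliov1997lipschitz} (Proposition 2.1) -- upper semicontinuity of $\bar F$ alone does not deliver such a selection -- and along this solution $\tilde d(t)=d((t,x(t)),\mathrm{Gr}\,\T)$ satisfies the linear differential inequality $\dot{\tilde d}\leq(L_F+L)\tilde d-\rho$, so Gr\"onwall forces $\tilde d$ to vanish by time $t_0+C\tilde d(t_0)/\rho$, re-projecting implicitly at every instant. This yields \eqref{eqn:claim} with $L_K=C(L_G+1)/\rho$ directly, with no quadratic-defect step and no iteration. To repair your argument you would essentially have to reproduce this construction (track the distance function itself and select velocities instant by instant), since the fixed-direction affine functional cannot be controlled under OSL.
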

\begin{proof}{\emph{Proof.}}
See Section \ref{proof:claim}.
\end{proof}

\begin{proposition}\label{prop:Lips}
\emph{Assume conditions \textbf{(SH)} and \textbf{(GC)}. Suppose that $\mathrm{Gr}\,\T$ satisfies \textbf{(IPC)}. 
Then $\D\subseteq \mathbb{R}^{1+n}$ is an open set and $V(t_k,x_k)\rightarrow+\infty$ for all $(t_k,x_k)\in\D$ such that $(t_k,x_k)\rightarrow(t_0,x_0)\in\mathrm{Bd}(\D)$.}
\end{proposition}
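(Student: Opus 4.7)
The plan is to prove openness of $\D$ first, and then deduce the blow-up of $V$ at $\mathrm{Bd}(\D)$ by a compactness/contradiction argument. Both steps rely on combining the one-sided Lipschitz estimate \eqref{Lipschitz_2}, the inward-pointing condition \textbf{(IPC)} (through Proposition \ref{prop:claim}) and the growth condition \textbf{(GC)}.

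For openness, fix $(t_0,x_0)\in\D$ and use Theorem \ref{prop:free} to pick a feasible pair $(\bar T,\bar x(\cdot))$ from $(t_0,x_0)$ with control $\bar u$ reaching $\mathrm{Gr}\,\T$ at $(\bar T,\bar x(\bar T))$. For $(t',x')$ close to $(t_0,x_0)$, I build a trajectory $y(\cdot)$ from $(t',x')$ in two cases: if $t'<t_0$, drive the system by any fixed admissible control on $[t',t_0]$ (the displacement is $O(t_0-t')$ thanks to $H_3$) and then apply $\bar u$; if $t'\geq t_0$, directly apply $\bar u|_{[t',\bar T]}$. In either case the estimate \eqref{Lipschitz_2}, evaluated on a fixed compact neighborhood of the graph of $\bar x$, yields
$$|y(\bar T)-\bar x(\bar T)|\leq C\,|(t',x')-(t_0,x_0)|.$$
Hence $(\bar T,y(\bar T))$ approaches $(\bar T,\bar x(\bar T))\in\mathrm{Gr}\,\T$ as $(t',x')\to(t_0,x_0)$. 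For $(t',x')$ sufficiently close, the point $(\bar T,y(\bar T))$ lies in the neighborhood $\mathrm{Gr}\,\T\cap K+\varepsilon_K\bar{\B}$ supplied by Proposition \ref{prop:claim}, and \eqref{eqn:claim} provides an extension of $y$ that hits $\mathrm{Gr}\,\T$ in finite time. Concatenating gives a feasible trajectory from $(t',x')$, so $(t',x')\in\D$ and $\D$ is open.

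For the blow-up, let $(t_k,x_k)\in\D$ with $(t_k,x_k)\to(t_0,x_0)\in\mathrm{Bd}(\D)$; openness forces $(t_0,x_0)\notin\D$. Assume toward contradiction that, along a subsequence, $V(t_k,x_k)\leq M$, and pick near-optimal pairs $(T_k,y_k)\in\A_{(t_k,x_k)}$ with $W(T_k,y_k)\leq M+1$. Since $(T_k,y_k)\in\mathrm{Gr}\,\T\subseteq\D$, the bound $V(T_k,y_k)\leq W(T_k,y_k)\leq M+1$ combined with Proposition \ref{prop:gc} rules out $T_k\to\infty$; passing to a further subsequence, $T_k\to T^*\in[t_0,\infty)$. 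The trajectories $x_k(\cdot)$ from $(t_k,x_k)$ to $(T_k,y_k)$ are uniformly bounded on $[t_0-1,T^*+1]$ by $H_3$ and Gr\"onwall, and using Proposition \ref{prop:properties} ($\bar F$ is upper semi-continuous with compact convex values and sublinear growth), the standard compactness argument for differential inclusions already invoked in Theorem \ref{prop:free} extracts a uniform limit $x^*(\cdot)$ which solves \eqref{eqn:bar_F} with $x^*(t_0)=x_0$ and $x^*(T^*)=\lim y_k$. Since $\mathrm{Gr}\,\T$ is closed by $H_6$, one gets $(T^*,x^*(T^*))\in\mathrm{Gr}\,\T$, hence $(t_0,x_0)\in\D$, contradicting $(t_0,x_0)\in\mathrm{Bd}(\D)\setminus\D$.

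The main obstacle is the openness step. Because $\bar F$ is only one-sided Lipschitz, there is no symmetric backward-in-time comparison that would transport $\bar x(\cdot)$ onto the perturbed datum $(t',x')$; one can only show that the forward trajectory from $(t',x')$ comes close to $\mathrm{Gr}\,\T$ at the prescribed terminal time, not that it lands on it. Closing this residual gap is precisely the role of the inward-pointing condition \textbf{(IPC)}, packaged in Proposition \ref{prop:claim}, which turns approximate reachability into exact reachability and thereby produces the genuine feasible trajectory from $(t',x')$.
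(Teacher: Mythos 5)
Your proof is correct and takes essentially the same approach as the paper: both parts rest on the forward one-sided Lipschitz estimate \eqref{Lipschitz_2}, on Proposition \ref{prop:claim} (via \textbf{(IPC)}) to close the residual gap to $\mathrm{Gr}\,\T$, and on the (GC)-based compactness argument for the blow-up at $\mathrm{Bd}(\D)$. The paper phrases the openness step contrapositively (showing $\D^c$ is closed, with a case split on whether the reaching time $T$ equals the initial time $t$) while you argue directly with a case split on $t'$ versus $t_0$, but the underlying construction of the perturbed feasible trajectory and the role of Proposition \ref{prop:claim} are the same.
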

\begin{proof}{\emph{Proof.}}
Let us show that $\mathcal{D}^{\mathsf{c}}$, the complement of $\mathcal{D}$, is closed.
Let $(t_n,x_n)$ be a sequence in $\mathcal{D}^{\mathsf{c}}$ converging to $(t,x)$. Fix $R>0$ such that $|(t,x)|<R$. We will show that $(t,x)\in\mathcal{D}^{\mathsf{c}}$.  

\textcolor{black}{Let us argue by contradiction assuming} that $(t,x)\in\mathcal{D}$. By definition of $\mathcal{D}$, there exist $(T,y)\in\mathcal{A}_{(t,x)}$, a control $u_x\in \mathcal{U}$ and $x(\cdot)$ solution of \eqref{eqn:dynamics} with control  $u_x\in \mathcal{U}$ such that $x(t)=x,\ x(T)=y,\ T\geq t$ and $(T,y)\in \mathrm{Gr}\,\T$.
\textcolor{black}{Clearly, one can choose $\e_n>0$ such that $\e_n\rightarrow 0$,} $$|(t,x)-(t_n,x_n)|<\e_n\textcolor{black}{,}$$
and $|(t,x)|+\e_n <R$. Let us consider two different cases:

\textbf{CASE 1: $T=t$.} Let $K\subseteq\R^{1+n}$ be a compact set containing $(t,x)=(T,y)\in \mathrm{Gr}\,\T$. Then
 $$d((t_n,x_n),\mathrm{Gr}\T)<|(t_n,x_n)-(T,y)|=|(t_n,x_n)-(t,x)|<\e_n$$ 
 and $(t_n,x_n)\in\mathrm{Gr}\,\T\cap K+\e_n\bar{\B}$. 
 In view of Proposition \ref{prop:claim}, there exist $\e_K,\ L_K>0$ \textcolor{black}{such that condition \eqref{eqn:claim} is satisfied for each point in $\mathrm{Gr}\,\T\cap K+\varepsilon_K\bar{\B}$. By taking $n$ sufficiently large, one can choose $\e_n$ such that $\e_n\leq\e_K$ and apply condition  \eqref{eqn:claim} at $(t_n, x_n)$. Since $(t_n,x_n)\not\in\mathcal{D}$, then $\mathcal{A}_{(t_n,x_n)}=\emptyset$, implying that $d((t_n,x_n),\mathcal{A}_{(t_n,x_n)})=+\infty$, which yields a contradiction.}

\textbf{CASE 2: $T>t$.}
Taking $n$ sufficiently large,  one can assume that $t_n<T$. Let $x_n(\cdot)$ be the unique trajectory of \eqref{eqn:dynamics} when $u(\cdot)=u_x(\cdot)$ with initial condition $x(t_n)=x_n$.
In view of relation \eqref{Lipschitz_2}, one has that
\begin{equation}
|x_n(T)-x(T)|<\lambda_{R}(T)\varepsilon_n
\end{equation}
where $\lambda_{R}(\cdot)$ is the function appearing in condition \eqref{Lipschitz_2}, Remark \ref{rem:traj} with the choice $r=R$. \textcolor{black}{Clearly, since $(t_n,x_n)\in \mathcal{D}^c$, then also the point $(T,x_n(T))$ is in  $\mathcal{D}^c$.}
Fix \textcolor{black}{a compact set} $K\subseteq\R^{1+n}$  containing $(T,y)=(T,x(T))$. Let us choose $\e_K,\ L_K$ such that the statement of Proposition \ref{prop:claim} is satisfied. 
For all $\e_n<\frac{\e_K}{\lambda_{R}(T)}$ it follows that $(T,x_n(T))\in \mathrm{Gr}\,\T\cap K+\e_K\bar{\B}$ \textcolor{black}{and that $$d((T,x_n(T)),\A_{(T,x_n(T))})\leq L_Kd((T,x_n(T)),\mathrm{Gr}\,\T).$$}\textcolor{black}{However, since $(T,x_n(T))\not\in\mathcal{D}$, one obtains a contradiction using the same arguments employed in \textbf{CASE 1}.}

\textcolor{black}{In both \textbf{CASE 1} and \textbf{CASE 2}, a contradiction is obtained. This shows} that $\mathcal{D}^c$ is closed and so that $\mathcal{D}$ is an open set.

Let us now prove that the value function $V(t,x)$ tends to infinity when $(t,x)$ approaches $\mathrm{Bd}(\D)$. Fix $\{(t_k,x_k)\}_k\subseteq\D$, $(t_k,x_k)\rightarrow(t_0,x_0)\in\mathrm{Bd}(\D)$. Let us use $x_k(\cdot)$ to denote a trajectory of  \eqref{eqn:bar_F} with initial condition $x_k(t_k)=x_k$. 

Assume by contradiction that $|V(t_k,x_k)|<\bar{M}$ for all $k$ and some $\bar{M}>0$. It follows from the definition of value function that, for all $\e_k>0$, there exists $(T_k,y_k)\in\A_{(t_k,x_k)}$ such that
 $$W(T_k,y_k)\leq V(t_k,x_k)+\e_k.$$ 
 Hence, $|W(T_k,y_k)|<+\infty$. Let us observe that, in view of \textbf{(GC)}, $T_k$ has to be bounded by a constant $M$. Hence, in view of the hypothesis $H_3$-$H_4$, also $y_k$ is uniformly bounded and one can arrange along a subsequence (we do not relabel) that  $(T_k,y_k)\rightarrow(T_0,y_0)\in\mathrm{Gr}\,\T$. Arguing as in Theorem \ref{prop:free}, one can find a subsequence of trajectories of \eqref{eqn:bar_F} such that  $x_k(\cdot)\rightarrow \tilde{x}(\cdot)$ uniformly on $[0,M]$. In particular, $x_k=x_k(t_k)\rightarrow \tilde{x}(t_0)=x_0$. Hence $\tilde{x}(\cdot)$ is a trajectory starting from $(t_0,x_0)$, such that $\dot{\tilde{x}}(t)\in \bar{F}(t,\tilde{x}(t))$ a.e. $t\in[t_0,T_0]$ and $(T_0,y_0)=(T_0,\tilde{x}(T_0))\in\mathrm{Gr}\,\T$. Then $\A_{(t_0,x_0)}\neq\emptyset$, which is impossible since $\D$ is open and $(t_0,x_0)\in\mathrm{Bd}(\D)$. This concludes the proof.
\end{proof}

The existence of a minimizer, together with \textcolor{black}{the} \textbf{(IPC)} and \textbf{(GC)} conditions guarantee the locally Lipschitz continuity of the value function on $\D$.
\begin{theorem}\label{prop:value_lips}
\emph{Assume that conditions \textbf{(SH)}, \textbf{(GC)} hold and that $\mathrm{Gr}\,\T$ satisfies \textbf{(IPC)}. Then $V$ is locally Lipschitz on $\D$.}
\end{theorem}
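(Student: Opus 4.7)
The plan is to prove, for each $(t_0,x_0)\in\D$, that $V$ is Lipschitz on a neighborhood of $(t_0,x_0)$, which gives local Lipschitz continuity on $\D$. I would combine four ingredients: existence of minimizers (Theorem \ref{prop:free}), the one-sided trajectory estimate \eqref{Lipschitz_2} in Remark \ref{rem:traj}, the endpoint-correction inequality in Proposition \ref{prop:claim} (which quantitatively encodes \textbf{(IPC)}), and the local Lipschitz regularity of $W$ from $H_7$. A preliminary step is to fix a minimizer $(T^{*}_{0},x^{*}_{0}(\cdot))$ for $(P)_{(t_0,x_0)}$ and observe that the perturbation argument below, applied from $(t_0,x_0)$, immediately yields the upper bound $V(t,x)\leq V(t_0,x_0)+C|(t,x)-(t_0,x_0)|$ on some neighborhood of $(t_0,x_0)$. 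Together with \textbf{(GC)}, this forces the optimal terminal times for all nearby initial conditions to be uniformly bounded by some $M$; consequently, all relevant trajectories are confined to a fixed compact set $K^{*}\subseteq\R^{1+n}$, and the constants $\lambda_{R}$, $\varepsilon_{K^{*}}$, $L_{K^{*}}$, together with the Lipschitz constant of $W$ near $\mathrm{Gr}\,\T$, can all be chosen uniformly.

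The central construction is as follows. Given $(t_1,x_1),(t_2,x_2)$ near $(t_0,x_0)$, take a minimizer $(T^{*},x^{*}(\cdot))$ with control $u^{*}\in\mathcal{U}$ for $(P)_{(t_1,x_1)}$, so $V(t_1,x_1)=W(T^{*},x^{*}(T^{*}))$ and $(T^{*},x^{*}(T^{*}))\in\mathrm{Gr}\,\T$. In the main case $t_1\leq t_2\leq T^{*}$, let $\tilde{x}(\cdot)$ solve \eqref{eqn:dynamics} on $[t_2,T^{*}]$ with $\tilde{x}(t_2)=x_2$ driven by $u^{*}|_{[t_2,T^{*}]}$. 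Estimate \eqref{Lipschitz_2} yields
\begin{equation}
|\tilde{x}(T^{*})-x^{*}(T^{*})|\leq\lambda_{R}(T^{*})\,|(t_1,x_1)-(t_2,x_2)|,
\end{equation}
so $(T^{*},\tilde{x}(T^{*}))$ lies within the same distance of $\mathrm{Gr}\,\T$ and, for the neighborhood taken small enough, falls in the region $\mathrm{Gr}\,\T\cap K^{*}+\varepsilon_{K^{*}}\bar{\B}$ where Proposition \ref{prop:claim} applies. This supplies $(T',y')\in\A_{(T^{*},\tilde{x}(T^{*}))}$ with $|(T',y')-(T^{*},\tilde{x}(T^{*}))|\leq L_{K^{*}}\lambda_{R}(T^{*})|(t_1,x_1)-(t_2,x_2)|$, up to an arbitrarily small additive error. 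Concatenating the two trajectories places $(T',y')$ in $\A_{(t_2,x_2)}$, and $H_7$ combined with the triangle inequality gives $V(t_2,x_2)\leq W(T',y')\leq V(t_1,x_1)+C|(t_1,x_1)-(t_2,x_2)|$. Exchanging the roles of the two points (thanks to the uniform bound on $T^{*}$ just established) and bridging any time gap $|t_1-t_2|$ by an auxiliary forward trajectory, whose spatial displacement is $O(|t_1-t_2|)$ by $H_3$, settles the remaining configurations and the reverse inequality.

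The main obstacle is precisely that $\bar{F}$ is only one-sided Lipschitz, so trajectories cannot be meaningfully compared backward in time; this is why the classical strong invariance argument fails in this setting, as stressed in the introduction. The workaround is to construct exclusively \emph{forward} trajectories from both initial data and to use Proposition \ref{prop:claim} — which converts the inward-pointing condition \textbf{(IPC)} into a quantitative bound on the distance from a point near $\mathrm{Gr}\,\T$ to the reachable target set $\A_{(\cdot,\cdot)}$ — to close the residual gap at the terminal time. The uniform terminal-time bound coming from \textbf{(GC)} is what forces all the auxiliary constants to be independent of the two points in a neighborhood of $(t_0,x_0)$, which is what promotes these local one-sided estimates into genuine local Lipschitz continuity of $V$ on $\D$.
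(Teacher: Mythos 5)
Your proposal is correct and follows essentially the same route as the paper's own proof: a forward comparison of trajectories driven by the optimal control of one point via the one-sided estimate \eqref{Lipschitz_2}, an endpoint correction near $\mathrm{Gr}\,\T$ supplied by Proposition \ref{prop:claim} (the paper re-runs the auxiliary inclusion \eqref{eq_Phi} directly, which is the same mechanism), and the local Lipschitz continuity of $W$ from $H_7$, with \textbf{(GC)} giving the uniform bound on optimal terminal times that makes all constants local. The only differences are organizational (you fix the minimizer at $(t_1,x_1)$ and symmetrize, while the paper fixes it at the point with smaller value and splits into the cases $T_2>t_2$ and $T_2=t_2$), not substantive.
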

\begin{proof}{\emph{Proof.}}
See Section \ref{proof:claim}.
\end{proof}
\begin{remark}
The growth condition \textbf{(GC)} permits to the optimal trajectory $\bar{x}(\cdot)$ of problem $(P)_{(t_0,x_0)}$ to reach the point in $\mathcal{A}_{(t_0,x_0)}$ which minimizes the cost function $W$. In general, $\bar{x}(\cdot)$ \textit{does not} stop when the target is reached, as it is the case in which one considers a problem $(P)$ in which the parameter to minimize is the time (namely, when $W(T,x)=T$). Indeed, the related cost function in the minimum time problem  satisfies the stronger condition:
\begin{itemize}
    \item \textbf{(LGC)}. For any $K\subseteq\R^{n+1}$ compact, there exists $\gamma>0$ such that
    \begin{equation*}
        W(t',x')\geq W(t,x)+\gamma(t'-t),
    \end{equation*}
    for all $(t,x)\in K$ and $(t',x')\in\A_{(t,x)}$.
\end{itemize}
This particular feature of the problem of study is also reflected in the formulation of the Hamilton-Jacobi equation.
\end{remark}
\section{Dynamic Programming and Invariance Principles}\label{sec:DPP}
\label{invariance}
In this section, we link the dynamic programming principle in Proposition \ref{prop:principle} with the weak and strong invariance principles for the epigraph and the hypograph of the value function w.r.t. a suitable, augmented dynamics. To this aim, let us now introduce the augmented differential inclusion
\begin{equation}\label{eqn:gamma}
    (AD)_{y_0}\,\left\{\begin{array}{lll}
    &\dot{y}(t)\in\Gamma(y(t)),\quad a.e.\;t\in[0,+\infty)\\[2mm]
    &y(0)=y_0
    \end{array}\right . 
\end{equation}
where $y(t)=(\tau(t),x(t),a(t))$, $y_0=(\tau_0, x_0, a_0)\in \R^{1+n+1}$ and $\Gamma(\tau,x,a)=\{1\}\times \bar{F}(\tau,x)\times\{0\}$. It is easy to check that all of the properties stated in Proposition \ref{prop:properties} for $\bar{F}$ are still valid for $\Gamma$.
\begin{definition}\label{def:escape_f}
Suppose $\mathcal{O}\subseteq \mathbb{R}^{1+n+1}$ is open, $y_0\in \mathcal{O}$ and $y(\cdot)$ solution of $(AD)_{y_0}$. Then $T> 0$ is an escape time from $\mathcal{O}$ (in which case we write $\mathrm{Esc}(y(\cdot),\mathcal{O}) := T$), provided at least one of the following conditions occurs:
\begin{itemize}
\item[a)] $T=\infty$ and $y(t)\in \mathcal{O}$ for all $t\geq 0$;
\item[b)] $y(t)\in \mathcal{O}$ for all $t\in [0,T)$ and $||y(t)||\rightarrow \infty$ as $t\rightarrow T;$
\item[c)] $T<\infty$, $y(t)\in \mathcal{O}$ for all $t\in [0,T)$ and $d(y(t), \mathcal{O}^{c})\rightarrow 0$ as $t\rightarrow T$.
\end{itemize}
\end{definition}
Let us recall the basic definitions of invariance principles (\cite{wolenski1998proximal}, Definition $3.1$). In particular, we will state a local version of the weak invariance principle which will be useful in proving the following results. 
\begin{definition}\label{def:weak}
 \emph{Take a closed set $\mathcal{C}\subseteq \mathbb{R}^{1+n+1}$ and an open set $\mathcal{O}\subseteq \mathbb{R}^{1+n+1}$.  $\mathcal{C}$ is weakly invariant w.r.t. the set-valued dynamics $\Gamma$ in $\mathcal{O}$ (and we write $(\mathcal{C},\Gamma)$ weakly invariant in $\mathcal{O}$) if and only if, for any initial condition $y_0\in \mathcal{C}\cap \mathcal{O}$ and for some $T>0$, the Cauchy problem $(AD)_{y_0}$
admits a solution $y(t)\in \mathcal{C}\cap \mathcal{O}$ for all $t\in[0,T)$.}
\end{definition}
\begin{definition}\label{def:strong}
\emph{ A closed set $\mathcal{C}\subseteq \mathbb{R}^{1+n+1}$ is strongly invariant w.r.t. the set-valued dynamics $\Gamma$ (and we write $(\mathcal{C},\Gamma)$ strongly invariant) if and only if, for any $y_0\in \mathcal{C}$, $T\geq 0$ and $y:[0,T]\to\mathbb{R}^{1+n+1}$ solution of $(AD)_{y_0}$, one has $y(t)\in \mathcal{C}$ for all $t\in[0,T]$}.
\end{definition}
The existence of an optimal trajectory can be reformulated  as both a weak invariance principle for the epigraph of $V$ and a  strong invariance principle for the hypograph of $V$. Such properties will be captured by the next propositions.\\
\begin{remark}\label{rem:augmented}
Fix $y_0=(\tau_0,x_0,a_0)\in\mathbb{R}^{1+n+1}$.  Any solution $y(t)=(\tau(t),x(t),a(t))$ of $(AD)_{y_0}$ is such that $\tau(t)=t+\tau_0$. The inverse  function of $\tau(t)$ is $t(\tau)=\tau-\tau_0$. Furthermore, one can observe that $z(\tau):=x(t(\tau))$ satisfies $\dot{z}(\tau)\in\bar{F}(\tau,z(\tau))$ a.e. $\tau\in[\tau_0,\infty)$ and $z(\tau_0)=x_0$.
\end{remark}
\begin{proposition}\label{prop:weak}
\emph{Assume that \textbf{(SH)}, \textbf{(IPC)} and \textbf{(GC)} hold true and $V$ is bounded below and lower semi-continuous. Fix $\mathcal{E}=\mathrm{epi}(V)$, where}
\begin{displaymath}
\mathrm{epi}(V)=\{(\tau,x,\beta)\in\mathbb{R}^{1+n+1}:\,V(\tau,x)\leq\beta\},
\end{displaymath}  
\emph{and the set $\mathcal{O}=\left(\Omega^{\mathsf{c}}\cap \D \right)\times\R$, where $$\Omega=\{(\tau,x)\in \mathrm{Gr}\,\T:\; V(\tau,x)=W(\tau,x) \}.$$
Then $(\mathcal{E},\Gamma)$ is weakly invariant in $\mathcal{O}$.}
\end{proposition}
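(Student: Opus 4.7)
The plan is to exhibit an explicit trajectory of $(AD)_{y_0}$ built from an optimal trajectory of the underlying problem $(P)$, and then verify that it lies in $\mathcal{E}\cap\mathcal{O}$ on a short time interval. Fix $y_0=(\tau_0,x_0,a_0)\in\mathcal{E}\cap\mathcal{O}$, so that $V(\tau_0,x_0)\leq a_0$ and $(\tau_0,x_0)\in\Omega^{\mathsf{c}}\cap\mathcal{D}$. Since $(\tau_0,x_0)\in\mathcal{D}$, Theorem \ref{prop:free} gives an optimal pair $(\bar T,\bar x(\cdot))$ for $(P)_{(\tau_0,x_0)}$ with $\bar x(\tau_0)=x_0$, $(\bar T,\bar x(\bar T))\in\mathrm{Gr}\,\T$, and $W(\bar T,\bar x(\bar T))=V(\tau_0,x_0)$. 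Using Remark \ref{rem:augmented}, the candidate trajectory of $(AD)_{y_0}$ is
\begin{equation*}
y(t)=(\tau_0+t,\,\bar x(\tau_0+t),\,a_0),\qquad t\in[0,\bar T-\tau_0].
\end{equation*}

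Next I would argue $\bar T>\tau_0$. If instead $\bar T=\tau_0$, then $(\tau_0,x_0)\in\mathrm{Gr}\,\T$ and $V(\tau_0,x_0)=W(\tau_0,x_0)$, placing $(\tau_0,x_0)$ in $\Omega$, contrary to the assumption that $(\tau_0,x_0)\in\Omega^{\mathsf{c}}$. Hence the interval $[0,\bar T-\tau_0]$ has positive length, and the construction is non-trivial. By Proposition \ref{prop:principle}, the value function is constant along this optimal trajectory, so $V(\tau_0+t,\bar x(\tau_0+t))=V(\tau_0,x_0)\leq a_0$ for all $t\in[0,\bar T-\tau_0]$, which proves $y(t)\in\mathcal{E}$.

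The remaining, and genuinely delicate, step is to show $y(t)\in\mathcal{O}$ for $t$ in some sub-interval $[0,T)\subseteq[0,\bar T-\tau_0]$, that is, $(\tau_0+t,\bar x(\tau_0+t))\in\Omega^{\mathsf{c}}\cap\mathcal{D}$. The inclusion in $\mathcal{D}$ for small $t$ follows from Proposition \ref{prop:Lips}, which asserts that $\mathcal{D}$ is open, together with the continuity of the trajectory at $\tau_0$. For membership in $\Omega^{\mathsf{c}}$, I would split on whether $(\tau_0,x_0)\in\mathrm{Gr}\,\T$. If $(\tau_0,x_0)\notin\mathrm{Gr}\,\T$, then by $H_6$ the complement of $\mathrm{Gr}\,\T$ is open, so a whole neighborhood of $(\tau_0,x_0)$ avoids $\mathrm{Gr}\,\T$ and thus also $\Omega$; for small $t$, continuity of $\bar x$ keeps $(\tau_0+t,\bar x(\tau_0+t))$ in this neighborhood. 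If instead $(\tau_0,x_0)\in\mathrm{Gr}\,\T$, then by the optimality/DPP argument above we must have $V(\tau_0,x_0)<W(\tau_0,x_0)$; here I would invoke the local Lipschitz continuity of $V$ on $\mathcal{D}$ (Theorem \ref{prop:value_lips}) and of $W$ near $\mathrm{Gr}\,\T$ ($H_7$) to deduce that the inequality $V<W$ persists on an entire neighborhood of $(\tau_0,x_0)$. In that neighborhood, every point of $\mathrm{Gr}\,\T$ satisfies $V<W$ and therefore lies in $\Omega^{\mathsf{c}}$, and every point outside $\mathrm{Gr}\,\T$ is trivially in $\Omega^{\mathsf{c}}$; continuity of $t\mapsto(\tau_0+t,\bar x(\tau_0+t))$ then yields $t\in[0,T)$ with the required property.

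The main obstacle is this last step: the set $\Omega$ is defined by an equality constraint between two continuous functions on a closed set, so it is not obviously open in its complement within $\mathrm{Gr}\,\T$. The resolution relies crucially on the strict inequality $V(\tau_0,x_0)<W(\tau_0,x_0)$, which is forced by the combination of $(\tau_0,x_0)\in\Omega^{\mathsf{c}}$, the existence of a minimizer (Theorem \ref{prop:free}), and the dynamic programming identity of Proposition \ref{prop:principle}; propagating this strict inequality to a neighborhood requires the continuity of both $V$ and $W$ that the hypotheses \textbf{(SH)}, \textbf{(IPC)}, \textbf{(GC)} precisely secure via Theorem \ref{prop:value_lips}.
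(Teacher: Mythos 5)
Your proof is correct and follows essentially the same route as the paper: both construct the augmented trajectory from the optimal pair supplied by Theorem \ref{prop:free}, use the dynamic programming principle (Proposition \ref{prop:principle}) to keep it in $\mathrm{epi}(V)$, and observe that the optimal time exceeds $\tau_0$ because $(\tau_0,x_0)\in\Omega^{\mathsf{c}}$. The only difference is cosmetic: the paper simply asserts openness of $\mathcal{O}$ up front and invokes the escape time, whereas you re-derive that openness inline via the case split on whether $(\tau_0,x_0)\in\mathrm{Gr}\,\T$, propagating the strict inequality $V<W$ using continuity of $V$ (Theorem \ref{prop:value_lips}) and $W$ ($H_7$).
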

\begin{proof}{\emph{Proof.}}
Since $V$ is lower semi-continuous, $\mathcal{E}$ is a closed set. Furthermore, $\mathcal{O}$ is an open set in view of Proposition \ref{prop:gc}. Fix $(\tau_0,x_0,\beta_0)\in \mathcal{E} \cap \mathcal{O}$. Then $V(\tau_0,x_0)\leq \beta_0<\infty$. Theorem \ref{prop:free} ensures the existence of an optimal solution $z^*(\tau)$ and an optimal time $S^*> \tau_0$ to the free time optimal control problem $(P)$ with initial condition $z^*(\tau_0)=x_0$ and such that $(S^*,z^*(S^*))\in\mathrm{Gr}\T$ (here, in view of Proposition \ref{prop:equivalence}, we are regarding $z^*(\cdot)$ as a solution of \eqref{eqn:bar_F} with initial condition $z^*(\tau_0)=x_0$).
By the optimality principle, for all $\tau\in[\tau_0,S^*]$,  $$V(\tau,z^*(\tau))=V(\tau_0,z(\tau_0))\leq\beta_0.$$
In view of Proposition \ref{prop:Lips}, $(\tau, z^*(\tau))\in \D$  for all $\tau\in[\tau_0,S^*]$. Furthermore, one has that $\tau_0<\mathrm{Esc}((\cdot, z^{*}(\cdot), V((\cdot, z^{*}(\cdot))), \Omega^{\mathsf{c}}\times\mathbb{R})\leq S^{*}$ since $V(S^*,z^*(S^*))=W(S^*,z^*(S^*))$.
For all $t\geq 0$, define $\tau(t)=t+\tau_0$. Therefore $\dot{\tau}(t)=1$ and $\tau(0)=\tau_0$. Define $x^*(t)=z^*(\tau(t))$ and observe that
  $y^*(t)=(\tau(t),x^*(t),\beta_0)$ is a solution of \eqref{eqn:gamma} with initial conditions $(\tau(0), x^*(0), \beta(0))=(\tau_0, x_0, \beta_0)$. Hence, setting $T^*:=S^*-\tau_0> 0$, one has $$V(\tau(t),x^*(t))\leq\beta_0,$$ for all $t\in[0,T^*]$. Furthermore, one has that $(\tau(t),x^*(t))\in \D$ for all $t\in[0,T^*]$ and that $0<\mathrm{Esc}((\tau(\cdot), x^{*}(\cdot), V(\tau(\cdot),x^*(\cdot))), \Omega^{\mathsf{c}}\times\mathbb{R})\leq T^{*}$ since $V(\tau(T^*), x^*(T^*))=W(\tau(T^*), x^*(T^*))$. This concludes the proof.
\end{proof}
\begin{proposition}\label{prop:strong}
\emph{Assume that $H_1$-$H_4$ are satisfied and that $V$ is upper semi-continuous. Define $\mathcal{H}=\mathrm{hypo}(V)$, that is}
\begin{displaymath}
  \mathrm{hypo}(V)=\{(\tau,x,\beta)\in \mathbb{R}^{1+n+1}: \,V(\tau,x)\geq\beta\}.
\end{displaymath}
\emph{Then $(\mathcal{H},\Gamma)$ is strongly invariant}.
\end{proposition}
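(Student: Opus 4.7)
The strategy is to reduce the strong invariance of $\mathcal{H}$ to the dynamic programming principle (Proposition \ref{prop:principle}) applied along the trajectory, splitting cases according to whether the base point lies in $\mathcal{D}$.

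Fix $y_0 = (\tau_0, x_0, \beta_0) \in \mathcal{H}$, so that $V(\tau_0, x_0) \geq \beta_0$, and let $y(t) = (\tau(t), x(t), a(t))$ be an arbitrary solution of $(AD)_{y_0}$ on $[0, T]$. Using Remark \ref{rem:augmented}, I would write $\tau(t) = \tau_0 + t$, $a(t) \equiv \beta_0$, and introduce $z(\tau) := x(\tau - \tau_0)$, which solves $\dot z \in \bar F(\tau, z)$ on $[\tau_0, \tau_0 + T]$ with $z(\tau_0) = x_0$. By Proposition \ref{prop:equivalence}, $z(\cdot)$ can also be viewed as a trajectory of \eqref{eqn:dynamics} under some admissible control $u \in \mathcal{U}$. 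Proving strong invariance thus reduces to showing $V(\tau_0 + t, z(\tau_0 + t)) \geq \beta_0$ for every $t \in [0, T]$.

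The argument then splits into two cases. If $(\tau_0, x_0) \in \mathcal{D}$, applying Proposition \ref{prop:principle} to $z$ immediately yields $V(\tau_0 + t, z(\tau_0 + t)) \geq V(\tau_0, x_0) \geq \beta_0$ for all $t \in [0, T]$, which is exactly the desired inequality. If instead $(\tau_0, x_0) \notin \mathcal{D}$, then $V(\tau_0, x_0) = +\infty$ by definition of the value function, and a concatenation argument shows $(\tau_0 + t, z(\tau_0 + t)) \notin \mathcal{D}$ as well: were it otherwise, gluing any feasible trajectory from $(\tau_0 + t, z(\tau_0 + t))$ reaching $\mathrm{Gr}\, \mathcal{T}$ with the segment $z|_{[\tau_0, \tau_0 + t]}$ would yield a feasible trajectory from $(\tau_0, x_0)$ reaching $\mathrm{Gr}\, \mathcal{T}$, contradicting $(\tau_0, x_0) \notin \mathcal{D}$. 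Consequently $V(\tau_0 + t, z(\tau_0 + t)) = +\infty \geq \beta_0$ in this case too, and $y(t) \in \mathcal{H}$ throughout $[0,T]$.

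The upper semi-continuity of $V$ enters only to guarantee that $\mathcal{H}$ is closed, so that the notion of strong invariance from Definition \ref{def:strong} is well posed; no one-sided Lipschitz estimate, inward pointing condition, or growth assumption \textbf{(GC)} is required for this proposition. The only mildly delicate point is the passage between trajectories of $\bar F$ and of $F$, which is what allows the dynamic programming principle (stated for the controlled inclusion \eqref{eqn:dynamics}) to be invoked for the $\bar F$-trajectory $z$; this is exactly the content of Proposition \ref{prop:equivalence}, and once it is available the proof is essentially mechanical.
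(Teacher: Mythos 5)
Your proof is correct and follows essentially the same route as the paper: translate the $\Gamma$-trajectory to a $\bar F$-trajectory via Remark \ref{rem:augmented}, split on whether the base point lies in $\mathcal{D}$, and invoke the monotonicity of $V$ along trajectories from Proposition \ref{prop:principle}. Your only addition is to spell out the concatenation argument showing $(\tau_0+t, z(\tau_0+t))\notin\mathcal{D}$ in the case $(\tau_0,x_0)\notin\mathcal{D}$, which the paper dismisses as ``trivially satisfied''; note, though, that invoking Proposition \ref{prop:equivalence} formally imports $H_5$, which the statement does not assume — one can avoid this by observing that $\mathcal{A}_{(\tau,z(\tau))}\subseteq\mathcal{A}_{(\tau_0,x_0)}$ holds directly for $\bar F$-trajectories by concatenation.
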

\begin{proof}{\emph{Proof.}}
Since $V$ is upper semi-continuous, then $\mathcal{H}$ is a closed set. 
Fix $(\tau_0,x_0,\beta_0)\in \mathcal{H}$. If $(\tau_0,x_0)\notin \D$, then the thesis is trivially satisfied. So let us assume that $(\tau_0,x_0)\in \D$. Then $V(\tau_0,x_0)\geq \beta_0$. In view of Remark \ref{rem:augmented}, given any trajectory of \eqref{eqn:gamma}, namely $y(t)=(\tau(t),x(t),\beta_0)$, with initial condition $y(0)=(\tau_0,x_0,\beta_0)$, it is possible to define $t(\tau)=\tau-\tau_0$ and $z(\tau)=x(t(\tau))$, trajectory of $\dot{z}(\tau)\in\bar{F}(\tau,z(\tau))$ a.e. $\tau\in[\tau_0,+\infty)$, with initial  condition $z(\tau_0)=x_0$. Let us observe  that the value function $V$ is non decreasing along $z(\tau)$ so that, for all $S\geq\tau_0$ and $\tau\in[\tau_{0},S]$, one has
 $$ \beta_0\leq V(\tau_0,x_0)\leq V(\tau,z(\tau)).$$
Finally, for any solution $y(t)$ of \eqref{eqn:gamma} with initial condition $y(t_0)=(\tau_0,x_0,\beta_0)$ and for any $T\geq 0$, one can set $S=T+\tau_0\geq\tau_0$. Hence 
$$ \beta_0\leq V(\tau_0,x_0)\leq V(\tau(t),x(t)),$$
for all $t\in[0,T]$. This concludes the proof.
\end{proof}
 \section{Hamilton-Jacobi-Bellman Inequalities}\label{sec:HJB}
\label{hj}
In this section we characterise the value function \eqref{eqn:value-function} as the unique viscosity solution of the Hamilton-Jacobi related to the problem $(P)$. Let us now define the maximized and minimized  Hamiltonians for $\Gamma$.
\begin{definition}
\emph{Fix $\eta, y \in\R^{1+n+1}$, $y=(\tau,x,a)$, $\eta=(\eta_1, \eta_2, \eta_3)$.
The minimized Hamiltonian is defined as}
\begin{displaymath}
    h_{\Gamma}(y,\eta)=\min_{v\in\Gamma(y)}\left<v,\eta\right>=\min_{v\in\bar{F}(\tau,x)}\left<(1,v,0), (\eta_1,\eta_2,\eta_3) \right>;
\end{displaymath}
\emph{the maximized Hamiltonian is defined as}
\begin{displaymath}
    H_{\Gamma}(y,\eta)=\max_{v\in\Gamma(y)}\left<v,\eta\right>=\max_{v\in\bar{F}(\tau,x)}\left<(1,v,0), (\eta_1,\eta_2,\eta_3) \right>.
\end{displaymath}
\end{definition}
We will now state the weak invariance and strong invariance characterisations in Hamiltonian forms. Let us first observe that,
since the set-valued map $\Gamma$ inherits the same properties of $\bar{F}$ (summarised in Proposition \ref{prop:properties} and hypothesis $H_5$), then the following result holds true (\cite{wolenski1998proximal}, Theorem $3.1$):
\begin{proposition}\label{prop:clarke}
\emph{Assume $H_1$-$H_5$ are satisfied, $V$ is  lower semi-continuous and $\mathcal{O}\subseteq \mathbb{R}^{1+n+1}$ is an open set. Then the following statements are equivalent:
\begin{itemize}
    \item[$i)$] $(\mathrm{epi}(V),\Gamma)$ is weakly invariant in $\mathcal{O}$;
    \item[$ii)$] For all $(\tau,x,a)\in \mathrm{epi}(V)\cap \mathcal{O}$,\\ $h_\Gamma((\tau,x,a),\eta)\leq 0$ for all $\eta \in N_{\mathrm{epi}(V)}^P(\tau,x,a)$.
\end{itemize}}
\end{proposition}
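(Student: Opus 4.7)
The plan is to prove the equivalence by two directions, using in both cases the fact that $\Gamma$ inherits from $\bar F$ the properties of being upper semi-continuous, compact-valued, convex-valued, and locally OSL (Proposition \ref{prop:properties} together with $H_5$), so that Cauchy problems $(AD)_{y_0}$ are well posed and the image of any trajectory is well approximated by elements of $\Gamma(y_0)$ at $t=0^+$.

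For the direction $(i)\Rightarrow(ii)$, fix $y_0=(\tau_0,x_0,a_0)\in\mathrm{epi}(V)\cap\mathcal{O}$ and $\eta\in N^P_{\mathrm{epi}(V)}(y_0)$. By weak invariance, there exist $T>0$ and a solution $y(\cdot)$ of $(AD)_{y_0}$ with $y(t)\in\mathrm{epi}(V)\cap\mathcal{O}$ for every $t\in[0,T)$. The proximal normal inequality yields $M>0$ such that $\langle\eta,y(t)-y_0\rangle\le M|y(t)-y_0|^{2}$ for $t$ small. Writing $y(t)-y_0=\int_{0}^{t}\dot y(s)\,ds$, dividing by $t$, and passing to the limit as $t\to 0^{+}$ along a sequence where $t^{-1}\int_{0}^{t}\dot y(s)\,ds$ converges (such a sequence exists since $\dot y$ is essentially bounded near $0$), the limit is an element of the closed convex set $\Gamma(y_0)$ by Aumann's theorem and convexity of $\Gamma(y_0)$. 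This produces $v\in\Gamma(y_0)$ with $\langle\eta,v\rangle\le 0$, so $h_\Gamma(y_0,\eta)\le 0$.

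For the direction $(ii)\Rightarrow(i)$, fix $y_0\in\mathrm{epi}(V)\cap\mathcal{O}$, pick $r>0$ so small that $\bar B(y_0,r)\subseteq\mathcal{O}$, and set $\mathcal{C}:=\mathrm{epi}(V)\cap\bar B(y_0,r)$, which is closed because $V$ is lower semi-continuous. Consider the squared distance $\varphi(y):=d_{\mathcal{C}}(y)^{2}$; it is locally Lipschitz and, by standard proximal calculus, for any $y\notin\mathcal{C}$ close to $\mathcal{C}$ every $\zeta\in\partial_P\varphi(y)$ has the form $\zeta=2(y-\bar y)$ for some projection point $\bar y\in\mathrm{proj}_{\mathcal{C}}(y)$, and $(y-\bar y)\in N^P_{\mathrm{epi}(V)}(\bar y)$. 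Hypothesis $(ii)$ applied at $\bar y$ furnishes $\bar v\in\Gamma(\bar y)$ with $\langle y-\bar y,\bar v\rangle\le 0$; combining this with the local OSL/Lipschitz regularity of $\Gamma$ one obtains a bound of the form $\langle\zeta,v\rangle\le K\varphi(y)$ for a suitable $v\in\Gamma(y)$, where $K$ depends only on the local constants. A classical proximal-aiming argument (as in the Clarke--Ledyaev--Stern--Subbotin monotonicity theorem) then produces a trajectory $y(\cdot)$ of $(AD)_{y_0}$ with $\varphi(y(t))\equiv 0$ for small $t\ge 0$, i.e., $y(t)\in\mathcal{C}\subseteq\mathrm{epi}(V)\cap\mathcal{O}$ on some $[0,T)$, which is exactly weak invariance.

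The main obstacle is the second direction: the step that turns the pointwise Hamiltonian inequality at projection points into a Gr\"onwall-type decay of $\varphi(y(t))$ along some selected trajectory. The one-sided Lipschitz character of $\Gamma$ (only guaranteed by Proposition \ref{prop:properties}, not full Lipschitz regularity) is exactly what is needed to make the constant $K$ above finite; this is where the argument differs from the fully Lipschitz setting and where care is needed to keep the constructed trajectory inside the open set $\mathcal{O}$ by shrinking $r$ if necessary.
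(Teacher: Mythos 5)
The paper does not actually prove this proposition: after observing that $\Gamma$ inherits the regularity of $\bar F$ (Proposition~\ref{prop:properties} and $H_5$), it simply invokes \cite{wolenski1998proximal}, Theorem~3.1. Your proposal reconstructs that proof, and both directions are essentially correct. The $(i)\Rightarrow(ii)$ step is the standard one: proximal normal inequality along an invariant trajectory, divide by $t$, identify the cluster point of $t^{-1}\int_0^t\dot y$ as an element of $\Gamma(y_0)$ using upper semicontinuity together with convexity and compactness of the values (``Aumann's theorem'' is a slightly misplaced attribution here, but the substance is right). For $(ii)\Rightarrow(i)$, your proximal-aiming step does close: relation~\eqref{eqn:donchev_osl} is symmetric under swapping the two base points, hence it controls the minimized Hamiltonian as well as the maximized one, yielding $h_\Gamma(y,y-\bar y)\le h_\Gamma(\bar y,y-\bar y)+L_{\bar F}|y-\bar y|^2\le L_{\bar F}\,\varphi(y)$ for any projection $\bar y$; feeding this through a measurable selection theorem of the kind the paper itself uses (Veliov, as in the proof of Proposition~\ref{prop:claim}) and Gr\"onwall gives $\varphi(y(t))\equiv 0$. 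Two points are worth adding to your write-up. First, the cited Wolenski--Zhuang theorem and the classical proximal-aiming argument do not need any OSL regularity for this direction: one builds Euler arcs with velocities selected from $\Gamma(\bar y)$ (the projection point) rather than $\Gamma(y)$, and upper semicontinuity plus convexity alone handle the limit passage; your variant, which transfers the selection to $\Gamma(y)$ via \eqref{eqn:donchev_osl}, is correct in the present setting but uses a stronger hypothesis than the result actually requires. Second, in the localization $\mathcal{C}=\mathrm{epi}(V)\cap\bigl(y_0+r\bar\B\bigr)$ one must ensure that the relevant projections fall in the open ball so that $y-\bar y$ really is a proximal normal to $\mathrm{epi}(V)$, not merely to $\mathcal{C}$; the local bound on $\Gamma$ and a suitably shrunken time horizon $T$ take care of this, but the sentence should be there.
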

Since $\Gamma$ also satisfies the relation \eqref{eqn:donchev_osl}, then one can invoke a strong invariance principle proved in \cite{donchev2005strong} and (\cite{donchev2008extensions}, Theorem 8).
\begin{proposition}\label{prop:donchev}
\emph{Assume $H_1$-$H_5$ are satisfied and $V$ is upper semi-continuous. Then the following statements are equivalent:
\begin{itemize}
    \item[$i)$] $(\mathrm{hypo}(V),\Gamma)$ is strongly invariant
    \item[$ii)$] For all $(\tau,x,a)\in \mathrm{hypo}(V)$ and $\eta \in N_{\mathrm{hypo}(V)}^P(\tau,x,a)$,\\ $\limsup_{(\tau',x',a')\to_{\eta}(\tau,x,a)}H_\Gamma((\tau',x',a'),\eta)\leq 0$.
\end{itemize}}
\end{proposition}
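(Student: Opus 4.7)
The plan is to reduce the equivalence to the strong-invariance theorem for one-sided Lipschitz differential inclusions proved in \cite{donchev2005strong}, in the sharper $\limsup$-formulation given in Theorem 8 of \cite{donchev2008extensions}. The argument splits naturally into two tasks: (a) verify that the augmented dynamics $\Gamma$ satisfies the hypotheses under which that abstract theorem applies, and (b) identify the target closed set as $\mathrm{hypo}(V)$ and translate its conclusion into the stated $\limsup$-condition on $H_\Gamma$.

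For task (a), Proposition \ref{prop:properties} already establishes that $\bar F$ is non-empty, compact-valued, upper semi-continuous, locally bounded (via $H_3$--$H_4$) and satisfies the graph-OSL inequality \eqref{eqn:donchev_osl}; hypothesis $H_5$ gives convex values. All of these properties transfer unchanged to $\Gamma(\tau,x,a)=\{1\}\times\bar F(\tau,x)\times\{0\}$, since adjoining the constant-singleton components $\{1\}$ and $\{0\}$ affects neither upper semi-continuity nor convexity nor compactness, and in the OSL estimate the first-coordinate contribution $1\cdot(\tau_1-\tau_2)$ cancels between the two maxima while the third coordinate is inert. Hence $\Gamma$ is admissible data for the Donchev theorem on any compact subset of $\R^{1+n+1}$, with the same OSL constant $L_{\bar F}$.

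For task (b), upper semi-continuity of $V$ makes $\mathcal{H}=\mathrm{hypo}(V)$ closed, which is the standing hypothesis of the cited theorem. Applying it to the pair $(\mathcal{H},\Gamma)$ yields exactly the stated equivalence: strong invariance is equivalent to the ``$\limsup$ along proximal normals'' inequality for the maximized Hamiltonian $H_\Gamma$. The $\limsup$ replaces a pointwise inequality precisely because $\Gamma$ is only OSL and not Lipschitz in the state variable, so the proximal normals at nearby boundary points need not be close to the one at the reference point. The implication (i)$\Rightarrow$(ii) is the easier half and follows by differentiating the proximal-normal defining inequality along an invariant trajectory and passing to the $\limsup$; the converse (ii)$\Rightarrow$(i) relies on an Euler/viability approximation scheme in which the OSL estimate is the crucial ingredient that allows trajectories starting from nearby points to be compared absent true Lipschitz continuity in $x$.

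The only genuine obstacle I foresee is checking that the precise form of OSL used in \cite{donchev2005strong,donchev2008extensions} coincides with our \eqref{eqn:donchev_osl}: several slightly different OSL-type conditions appear in that literature, and one must select the variant that matches ours (in particular, the ``max--max'' formulation in \eqref{eqn:donchev_osl} rather than the weaker one-sided inner-product estimate on selections). Once this matching is made---which is precisely the role of (\cite{donchev2008extensions}, Theorem 8)---the proof collapses to the single observation already anticipated in the paragraph preceding the proposition, namely that $\Gamma$ inherits \eqref{eqn:donchev_osl} from $\bar F$, so that the cited theorem applies verbatim.
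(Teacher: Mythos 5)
Your proposal matches the paper's own argument exactly: the paper also verifies that $\Gamma$ inherits from $\bar F$ the non-emptiness, compactness, convexity, upper semi-continuity, and the one-sided Lipschitz estimate \eqref{eqn:donchev_osl} (all of which are immediate since the extra coordinates $\{1\}$ and $\{0\}$ are inert), notes that $\mathrm{hypo}(V)$ is closed by upper semi-continuity of $V$, and then invokes Theorem 8 of \cite{donchev2008extensions} (together with \cite{donchev2005strong}) verbatim. No substantive difference from the paper's proof.
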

In the previous Proposition, given $z\in\R^{1+n+1}$ and a non-zero vector $\eta\in\R^{1+n+1}$, $z'\to_{\eta}z$ is equivalent to say that $z'\to z$ and $(z'-z)/|z'-z|\to\eta/|\eta|$.

In what follows, we prove a comparison principle result characterizing any continuous function that exhibits  the same qualitative properties of the value function $V$. Precisely:
\begin{proposition}\label{prop:theta}
\emph{Assume that \textbf{(SH)} hold and that \textbf{(GC)} and \textbf{(IPC)} are satisfied. Let $\Theta:\mathbb{R}^{1+n}\rightarrow \mathbb{R}$ be a continuous, bounded below function such that:
\begin{itemize}
    \item[a)] $\Theta(t,x)\leq W(t,x)$ for each $(t,x)\in \mathrm{Gr}\,\T$;
    \item[b)] $\Theta(t,x)=+\infty$ for all $(t,x)\not\in\D$;
    \item[c)] $\Theta(t_k,x_k)\rightarrow+\infty$ for all $(t_k,x_k)\in\D$ such that $(t_k,x_k)\rightarrow(t,x)\in\mathrm{Bd}(\D)$;
    \item[d)] For every $(t_k, x_k)\in \D$ such that $t_k\rightarrow \infty$, then $\Theta(t_k,x_k)\rightarrow \infty$.    \end{itemize}
    Then one has that:
    \begin{itemize}
    \item [i)] If $(\mathrm{epi}(\Theta), \Gamma)$ is weakly  invariant in $\mathcal{O}=\left(\Omega^c\cap \D \right)\times \R$, where
$$\Omega=\left\{(\tau,x)\in\mathrm{Gr}\,\T:\ \Theta(\tau,x)=W(\tau,x) \right\},$$
then one has that $V(t,x)\leq \Theta(t,x)$. 
\item [ii)] if $(\mathrm{hypo}(\Theta), \Gamma)$ is strongly invariant. Then one has \textcolor{black}{that} $V(t,x)\geq\Theta(t,x)$.
\end{itemize}
}
\end{proposition}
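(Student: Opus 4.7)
The plan is to convert each inequality into an invariance statement about the augmented dynamics $\Gamma$ and then mimic, for $\Theta$, the dynamic-programming reasoning used for $V$. For part \emph{(i)}, I would first dispose of the easy cases: if $(t,x)\notin\D$, then both $V(t,x)$ and $\Theta(t,x)$ equal $+\infty$ by definition and by condition b), so the bound is vacuous; if $(t,x)\in\Omega$, then $(t,x)\in\mathrm{Gr}\,\T$ and $\Theta(t,x)=W(t,x)$, so the trajectory of length zero is feasible and yields $V(t,x)\leq W(t,x)=\Theta(t,x)$. This leaves the essential case $(t_0,x_0)\in\D\cap\Omega^c$, for which $y_0:=(t_0,x_0,\Theta(t_0,x_0))\in\mathrm{epi}(\Theta)\cap\mathcal{O}$ and the weak-invariance hypothesis yields a local $\Gamma$-trajectory through $y_0$ lying in $\mathrm{epi}(\Theta)\cap\mathcal{O}$; a Zorn's lemma argument extends it to a maximal trajectory $y(s)=(t_0+s,x(s),\Theta(t_0,x_0))$ on $[0,T^*)$ with $T^*=\mathrm{Esc}(y(\cdot),\mathcal{O})$.

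The crux is to show that $T^*<\infty$ and that the limit point projects into $\Omega$. Because the third component is constant and $y(s)\in\mathrm{epi}(\Theta)$, one has $\Theta(t_0+s,x(s))\leq\Theta(t_0,x_0)$ throughout $[0,T^*)$; condition d) then rules out $T^*=+\infty$, while the Gr\"onwall-type estimate from Remark \ref{rem:traj} keeps $x(s)$ bounded on $[0,T^*)$, excluding option b) of Definition \ref{def:escape_f}. Hypotheses b) and c) make $\Theta$ lower semi-continuous on $\R^{1+n}$, so $\mathrm{epi}(\Theta)$ is closed, the limit $y(T^*)$ exists, still lies in $\mathrm{epi}(\Theta)$, and hence $\Theta(t_0+T^*,x(T^*))<\infty$, i.e. $(t_0+T^*,x(T^*))\in\D$. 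Since $y(T^*)\in\partial\mathcal{O}$, its first two coordinates lie in $\Omega\cup\D^c$, and the preceding observation excludes $\D^c$, forcing $(t_0+T^*,x(T^*))\in\Omega\subseteq\mathrm{Gr}\,\T$. Interpreting $x(\cdot)$ via Proposition \ref{prop:equivalence} as a trajectory of \eqref{eqn:dynamics}, one obtains a feasible process for $(P)_{(t_0,x_0)}$, so $V(t_0,x_0)\leq W(t_0+T^*,x(T^*))=\Theta(t_0+T^*,x(T^*))\leq\Theta(t_0,x_0)$.

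Part \emph{(ii)} is considerably shorter. If $(t_0,x_0)\notin\D$ the bound is again trivial; otherwise Theorem \ref{prop:free} supplies a minimizer $(\bar T,\bar x(\cdot))$ for $(P)_{(t_0,x_0)}$, and the augmented curve $y(s):=(t_0+s,\bar x(t_0+s),\Theta(t_0,x_0))$ is a $\Gamma$-trajectory starting at $(t_0,x_0,\Theta(t_0,x_0))\in\mathrm{hypo}(\Theta)$. Strong invariance of $\mathrm{hypo}(\Theta)$ then forces $\Theta(t_0+s,\bar x(t_0+s))\geq\Theta(t_0,x_0)$ for every $s\in[0,\bar T-t_0]$; evaluating at $s=\bar T-t_0$ and combining hypothesis a) with $(\bar T,\bar x(\bar T))\in\mathrm{Gr}\,\T$ yields $\Theta(t_0,x_0)\leq\Theta(\bar T,\bar x(\bar T))\leq W(\bar T,\bar x(\bar T))=V(t_0,x_0)$. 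I expect the main technical obstacle to be the escape-time analysis in \emph{(i)}: conditions b), c) and d) must cooperate carefully to force the maximally extended trajectory to terminate precisely in $\Omega$ rather than drift to infinity in time or approach $\mathrm{Bd}(\D)$; by contrast, part \emph{(ii)} is essentially a one-line application of strong invariance along the optimal trajectory granted by Theorem \ref{prop:free}.
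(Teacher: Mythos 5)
Your proof is correct and follows essentially the same route as the paper: in part (i) you run the augmented trajectory starting at $(t_0,x_0,\Theta(t_0,x_0))$ forward by weak invariance until the escape time, rule out $T^*=\infty$ by condition d) and finite-time blow-up by the Gr\"onwall bound, and use b)–c) (via lower semicontinuity / closedness of $\mathrm{epi}(\Theta)$) to force the endpoint into $\Omega\subseteq\mathrm{Gr}\,\T$, exactly as the paper does via condition c) and relation \eqref{eqn:weak_theta}; in part (ii) you propagate strong invariance of $\mathrm{hypo}(\Theta)$ along a feasible trajectory and close with condition a). The only stylistic deviations are that you invoke Zorn's lemma where the paper uses a compactness argument (Proposition 2.6.1 of \cite{vinter2010optimal}) to show the supremal trajectory is attained and equals the escape time, and that in (ii) you use the minimizer supplied by Theorem \ref{prop:free} rather than an arbitrary element of $\mathcal{A}_{(t_0,x_0)}$; neither changes the substance of the argument.
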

\begin{proof}{\emph{Proof.}}
i). Given any $y_0=(t_0,x_0,a_0)\in \mathrm{epi}(\Theta)\cap \mathcal{O}$, let us define
 \begin{equation}\label{T_max}
\begin{array}{lr}
T_{max}:=\sup\, \left\{T>0:\; \exists \; y(\cdot)\; \mathrm{solution}\;\mathrm{of}\;(AD)_{y_0}\right . \\[1mm]
\left . \qquad \qquad\qquad\; \mathrm{s.t.}\; y(t)\in \mathrm{epi}(\Theta)\cap \mathcal{O} \;\; \mathrm{for}\;\mathrm{all}\; t\in[0,T)\right\}.
\end{array}
\end{equation}
Since the couple $(\mathrm{epi}(\Theta),\Gamma)$ is weakly invariant in $\mathcal{O}$, the supremum in \eqref{T_max} is taken over a non-empty set. In what follows, we will show that the supremum in \eqref{T_max} is actually a maximum. In fact, let us take a maximizing sequence of trajectories $y_n(\cdot)$ of $(AD)_{y_0}$ and the related $T_n$ such that $y_n(t)\in \mathrm{epi}(\Theta)\cap \mathcal{O}$ for all $t\in [0,T_n)$. If $T_n\rightarrow \infty$,  then one would easily get  a contradiction from  the weak invariance of the couple $(\mathrm{epi}(\Theta), \Gamma)$ and from the condition d) on $\Theta$. Furthermore, by standard compactness arguments (see, e.g. Proposition $2.6.1$, \cite{vinter2010optimal}), $y_n(\cdot)\rightarrow y(\cdot)$ uniformly on $[0,T_{max}]$, where $y(\cdot)$ is a  trajectory of $(AD)_{y_0}$.  This implies that the supremum in \eqref{T_max} is a maximum and that there exists a solution $y(t)=(\tau(t),x(t),a_0)$ to \eqref{eqn:gamma} with initial condition $y(0)=(t_0,x_0,a_0)$ (where $\tau(t)=t+t_0$) such that  $\Theta(\tau(t),x(t))\leq a_0$ for all $t\in[0, T_{max})$. 
Fix $a_0=\Theta(t_0,x_0)$ and let us now show that $T_{max}=\mathrm{Esc}(y(\cdot), \mathcal{O})$.  In fact, if $T_{max}\neq \mathrm{Esc}(y(\cdot), \mathcal{O})$, this implies that $y(t)\in \mathrm{epi}(\Theta)\cap \mathcal{O}$ for all $t\in [0, T_{max}]$ and that $y(T_{max}+\varepsilon)\notin \mathrm{epi}(\Theta)$, $y(T_{max}+\varepsilon)\in \mathcal{O}$ for every $\varepsilon>0$ sufficiently small.  However, using again  the weak invariance principle of the couple $(\mathrm{epi}(\Theta),\Gamma)$, one could  construct a new trajectory $\tilde{y}(\cdot)=(\tilde{\tau}(\cdot), \tilde{x}(\cdot), \tilde{a})$ defined on $[T_{max}, \tilde{T})$ for some $\tilde{T}>T_{max}$,  with initial condition $\tilde{y}(T_{max})=y(T_{max})$ such that $\Theta(\tilde{\tau}(t),\tilde{x}(t))\leq \Theta(\tau(T_{max}), x(T_{max}))=\Theta(t_0,x_0)$
for every $t\in [T_{max}, \tilde{T})$, which is clearly a contradiction with the definition of $T_{max}$. These arguments show that  there exists a trajectory $y(\cdot)$ solution of \eqref{eqn:gamma} such that $y(0)=(t_0,x_0,a_0)$ and $y(t)\in \mathrm{epi}(\Theta)\cap \mathcal{O}$ for every $t\in[0, \mathrm{Esc}(y(\cdot), \mathcal{O})=T_{max})$. In particular this implies that
\begin{equation}\label{eqn:weak_theta}
\Theta(\tau(t),x(t))\leq \Theta(t_0,x_0),
\end{equation}
for every $t\in[0, T_{max})$. Let us now observe that, in view of condition c) on $\Theta$ and on relation \eqref{eqn:weak_theta}, one has that $T_{max}=\mathrm{Esc}(y(\cdot), \mathcal{O})=\mathrm{Esc}(y(\cdot), \Omega^{c}\times \R)$. One then easily obtains that $(\tau(T_{max}),x(T_{max}))\in \mathrm{Gr}\T$ and that
\begin{equation}
     W(\tau(T_{max}),x(T_{max}))=\lim_{t\rightarrow T_{max}}\Theta(\tau(t),x(t))\leq \Theta(t_0,x_0).
\end{equation}
Hence $V(t_0,x_0)\leq \Theta(t_0,x_0)$.

ii). The couple $(\mathrm{hypo}(\Theta),\Gamma)$ is strongly invariant. This implies  that, given any $(t_0,x_0,a_0)\in \mathrm{hypo}(\Theta)$ and $T\geq 0$, any solution $y(t)=(\tau(t),x(t),a_0)$ of \eqref{eqn:gamma} with initial condition $y(0)=(t_0,x_0,a_0)$ remains in $\mathrm{hypo}(\Theta)$ for all $t\in [0, T]$.
If $(t_0,x_0)\not\in\D$, then $+\infty=V(t_0,x_0)=\Theta(t_0,x_0)$ in view of condition b) on $\Theta$. 
Fix now $(t_0,x_0)\in\D$ and take $(\bar{S},\bar{x})\in\A_{(t_0,x_0)}$. Then there exists a solution $z(\tau)$ such that $\dot{z}(\tau)\in\bar{F}(\tau,z(\tau))$ for all $\tau\in[t_0,\bar{S}]$, $z(t_0)=x_0$ and $z(\bar{S})=\bar{x}$. Define $\tau(t)=t+t_0$, $x(t)=z(\tau(t))$ and $T=\bar{S}-t_0$. Then $y(t)=(\tau(t),x(t),a_0)$ is a solution of \eqref{eqn:gamma} for all $t\in[0,T]$ with initial condition $y(0)=(t_0,x_0,a_0)$.
Let us choose $a_0=\Theta(t_0,x_0)$. It follows from the strong invariance principle and the condition a) on $\Theta$ that
\begin{align*}
W(\bar{S},\bar{x})\geq\Theta(\tau(T),x(T))\geq a_0=\Theta(t_0,x_0)
\end{align*}
Hence $\Theta(t_0,x_0)\leq V(t_0,x_0)$. This concludes the proof.
\end{proof}
The results in the previous sections  permit to characterize the value function as the unique continuous, viscosity  solution of a set of Hamilton-Jacobi inequalities. Indeed one has the following:
\begin{theorem}\label{thm:char_cone}
\emph{Assume hypotheses \textbf{(SH)} and that conditions \textbf{(GC)}, \textbf{(IPC)} are satisfied. Then the value function $V$  is the unique continuous, bounded below, locally Lipschitz in $\mathcal{D}$ function which satisfies the following properties: 
\begin{itemize}
\item[i)] $V(t,x)\leq W(t,x)$ for each $(t,x)\in \mathrm{Gr}\,\T$;
\item[ii)] $V(t,x)=+\infty$ for all $(t,x)\not\in\D$;
\item[iii)] $V(t_k,x_k)\rightarrow+\infty$ for all $(t_k,x_k)\in\D$ such that $(t_k,x_k)\rightarrow(t,x)\in\mathrm{Bd}(\D)$;
\item[iv)] For every $(t_k, x_k)\in \D$ such that $t_k\rightarrow \infty$, then $V(t_k,x_k)\rightarrow \infty$; 
\end{itemize}
Let us consider the no-characteristic set
$$\Omega=\left\{(\tau,x)\in\mathrm{Gr}\,\T:\ V(\tau,x)=W(\tau,x) \right\}.$$
Then:
\begin{itemize}
\item[v)] take $\mathcal{O}:=\Omega^{c}\times \mathbb{R}$. For every $(t,x,a)\in\mathrm{epi}(V)\cap \mathcal{O}$ one has
\begin{equation}
\min_{v\in \bar{F}(t,x)}(1,v,0)\cdot p\leq 0\qquad\qquad 
    \forall p\in N^P_{\mathrm{epi}(V)}(t,x,a);
\end{equation}
\item[vi)] for every $(t,x,b)\in\mathrm{hypo}(V)$, one has
\begin{equation}
\limsup_{(t',x',b')\to_{p}(t,x,b)}\max_{v\in \bar{F}(t',x')}(1,v,0)\cdot p\leq 0, 
\end{equation}
for all $p\in N^P_{\mathrm{hypo}(V)}(t,x,b)$;
\item[vii)] for every $(t,x)\in\D\cap (\Omega^c)$ one has 
\begin{equation}
p_t+\min_{v\in \bar{F}(t,x)}v\cdot p_x\leq 0,\quad \forall\ (p_t,p_x)\in\partial_P V(t,x);
\end{equation}
\item[viii)] for every $ (t,x)\in\D$, one has
\begin{equation}
q_t+\liminf_{x'\rightarrow_{-q_x}x} \left\{\min_{v\in \bar{F}(t,x')}v\cdot q_x\right\}\geq 0,
\end{equation}
for every $ q=(q_t,q_x)\in\partial^P V(t,x)$.
\end{itemize}}
\end{theorem}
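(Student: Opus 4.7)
The plan is to split the argument into three blocks: the fact that $V$ itself satisfies i)--viii), the uniqueness inside the class defined by these properties, and finally the translation between the normal cone formulation v)--vi) and the sub/super-differential formulation vii)--viii).

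For the first block, items i)--iv) are essentially already available: item i) is immediate because if $(t,x)\in \mathrm{Gr}\,\T$ then $(t,x)\in \A_{(t,x)}$ (take the constant trajectory over the degenerate interval), so $V(t,x)\leq W(t,x)$; item ii) holds by definition of $V$ on $\D^c$; item iii) is exactly Proposition~\ref{prop:Lips}; item iv) is Proposition~\ref{prop:gc}; and the local Lipschitz regularity on $\D$ is Theorem~\ref{prop:value_lips}. Then for v) I would combine the weak invariance of $(\mathrm{epi}(V),\Gamma)$ in $\mathcal{O}=(\Omega^c\cap \D)\times\mathbb{R}$, proved in Proposition~\ref{prop:weak}, with the Hamiltonian equivalence in Proposition~\ref{prop:clarke} applied to $\mathcal{O}$. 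Similarly, for vi) I would feed the strong invariance of $(\mathrm{hypo}(V),\Gamma)$ from Proposition~\ref{prop:strong} into Proposition~\ref{prop:donchev}.

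For the passage from v) to vii) and from vi) to viii), I would use the standard dictionary between proximal normals to the epigraph/hypograph of a function and the proximal sub/super-differentials. Since $V$ is locally Lipschitz on $\D$ and $\D$ is open (Proposition~\ref{prop:Lips}), at any $(t,x)\in \D\cap \Omega^c$ and any $p=(p_t,p_x)\in\partial_P V(t,x)$ one has $(p_t,p_x,-1)\in N^P_{\mathrm{epi}(V)}(t,x,V(t,x))$, and plugging this into v) gives vii). For viii) the translation is slightly more delicate: $q=(q_t,q_x)\in\partial^P V(t,x)$ corresponds to $(-q_t,-q_x,1)\in N^P_{\mathrm{hypo}(V)}(t,x,V(t,x))$, and applying vi) with this normal and changing the $\max_{v\in \bar F}\langle(1,v,0),(-q_t,-q_x,1)\rangle=-q_t-\min_{v\in \bar F(t',x')}v\cdot q_x$ converts the $\limsup$ with $z'\to_{\eta}z$ (where $\eta$ is proportional to $(-q_t,-q_x,1)$) into the $\liminf_{x'\to_{-q_x}x}$ appearing in viii), because along such approaches the $(t',b')$--component of $z'$ tracks $(t,V(t,x))$.

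For uniqueness, suppose $\Theta$ is another continuous, bounded below function, locally Lipschitz on $\D$, satisfying i)--viii). Since $\Theta$ is continuous, $\mathrm{epi}(\Theta)$ is closed and $\mathrm{hypo}(\Theta)$ is closed. Conditions v) and vi) for $\Theta$, run through Propositions~\ref{prop:clarke} and \ref{prop:donchev} in the \emph{reverse} direction, yield weak invariance of $(\mathrm{epi}(\Theta),\Gamma)$ in $(\Omega_\Theta^c\cap \D)\times \mathbb{R}$ with $\Omega_\Theta=\{(\tau,x)\in\mathrm{Gr}\,\T:\Theta(\tau,x)=W(\tau,x)\}$, and strong invariance of $(\mathrm{hypo}(\Theta),\Gamma)$. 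Together with properties i)--iv) for $\Theta$, the hypotheses of Proposition~\ref{prop:theta} are met, giving both $V\leq \Theta$ and $V\geq \Theta$, hence $V=\Theta$. If instead uniqueness is phrased against the pair vii)--viii), one first upgrades these to v)--vi) using the same epigraph/hypograph dictionary (now in the opposite direction, which is the easy direction), then proceeds as above.

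The main obstacle I anticipate is the rigorous translation for item viii), because the $\limsup_{z'\to_\eta z}$ in Proposition~\ref{prop:donchev} involves a directional approach in $\mathbb{R}^{1+n+1}$ and we must verify that, after projecting to the $x$--component and exploiting the local Lipschitz continuity of $V$, the corresponding directional approach indeed reduces to $x'\to_{-q_x}x$ with the temporal and vertical components free; a careful bookkeeping of how the proximal super-differential $(q_t,q_x)$ embeds as a proximal normal $(-q_t,-q_x,1)$ and how $\max$ over $\Gamma$ rewrites in terms of $\min$ over $\bar F$ is the only delicate point. Everything else reduces to direct invocation of the previously proved propositions.
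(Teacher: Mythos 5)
Your proposal is correct and follows essentially the same route as the paper: i)--iv) and the regularity come from Propositions \ref{prop:gc}, \ref{prop:Lips} and Theorem \ref{prop:value_lips}, v)--vi) from the invariance results combined with Propositions \ref{prop:clarke} and \ref{prop:donchev}, vii)--viii) from the proximal normal cone representation of $\partial_P V$ and $\partial^P V$, and uniqueness from Proposition \ref{prop:theta} after running the equivalences of Propositions \ref{prop:clarke} and \ref{prop:donchev} in reverse. Only note that in the step you flag as delicate (passing from vi) to viii)) the time variable inside $\bar{F}(t',x')$ is frozen at $t$ by invoking the local Lipschitz continuity of $t\leadsto\bar{F}(t,x)$ (Proposition \ref{prop:properties}, via $H_2$), not the Lipschitz continuity of $V$; the Lipschitz continuity of $V$ is instead what rules out nonzero horizontal proximal normals to $\mathrm{epi}(V)$ and $\mathrm{hypo}(V)$, which is the ingredient needed when one upgrades vii)--viii) back to v)--vi) in the uniqueness argument.
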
 
\begin{proof}{\emph{Proof.}}
Conditions i)-vi) follow from Propositions \ref{prop:gc}, \ref{prop:Lips}-\ref{prop:theta}.
Furthermore, in view of Proposition \ref{prop:Lips} and Theorem \ref{prop:value_lips}, $V$ is locally Lipschitz continuous \textcolor{black}{on} $\mathcal{D}$ and continuous \textcolor{black}{on} $\mathbb{R}^{1+n}$. Theorem \ref{prop:free} assures that $V$ is bounded below. If either $\partial_P V(t,x)=\emptyset$ or $\partial^P V(t,x)=\emptyset$, then, respectively, condition vii) and condition viii) are satisfied. It remains to show conditions vii)-viii) in the other cases to conclude the proof.
It follows from an easy application of  (\cite{vinter2010optimal}, Proposition $4.3.4$)  that, for every $(t,x)\in\D$, one has 
\begin{equation}
\begin{array}{ll}
  N^P_{\mathrm{epi}(V)}(t,x,V(t,x))=\qquad\\\qquad \{(\lambda p,-\lambda):\,\lambda>0,\quad p\in\partial_P V(t,x)\}\cup\{(0,0)\},\\[2mm]
  N^P_{\mathrm{hypo}(V)}(t,x,V(t,x))=\qquad\\\qquad \{(-\lambda q,\lambda):\,\lambda>0,\quad q\in\partial^P V(t,x)\}\cup\{(0,0)\}.
\end{array}
\end{equation} 
Let $p=(p_t,p_x)\in\partial_P V(t,x)$. Then $(\lambda p,-\lambda)\in N^P_{\mathrm{epi}(V)}(t,x,V(t,x))$ for every $\lambda>0$.  It follows from condition v) that, rescaling  w.r.t. $\lambda>0$, one obtains
 \begin{align*}
    &\min_{v\in \bar{F}(t,x)}(1,v,0)\cdot (\lambda p, -\lambda) \leq 0\implies\\
    &\lambda \min_{v\in \bar{F}(t,x)}(1,v)\cdot (p_t,p_x)\leq 0\\
    &\implies p_t+\min_{v\in \bar{F}(t,x)}v\cdot p_x\leq 0 ,
\end{align*}
for every $(t,x)\in\D\cap (\Omega)^{c}$, for every $(p_t,p_x)\in\partial_P V(t,x)$. 

Similarly, if $q=(q_t,q_x)\in\partial^P V(t,x)$ then $(-\lambda q,\lambda)\in N^P_{\mathrm{hypo}(V)}(t,x,V(t,x))$ for every $\lambda>0$. Hence, setting $\bar{q}=(-\lambda q,\lambda)\in N^P_{\mathrm{hypo}(V)}(t,x,V(t,x))$, it follows from condition vi) that
\begin{align}
    &\limsup_{(t',x',a')\rightarrow_{\bar{q}}(t,x,V(t,x))}\max_{v\in \bar{F}(t',x')}(1,v,0)\cdot(-\lambda q,\lambda)\leq 0\implies\\
    &\limsup_{(t',x')\rightarrow_{-q}(t,x)}\max_{v\in \bar{F}(t',x')}(1,v)\cdot(-\lambda q_t,-\lambda q_x)\leq 0\implies\\
    &\limsup_{(t',x')\rightarrow_{-q}(t,x)}\{-\lambda \min_{v\in \bar{F}(t',x')}(1,v)\cdot (q_t, q_x)\}\leq 0\implies\\
    &\liminf_{(t',x')\rightarrow_{-q}(t,x)}\left\{q_t+\min_{v\in \bar{F}(t',x')}v\cdot q_x\right\}\geq 0, \label{eqn:super_sol}
\end{align}
for every $(t,x)\in\D$ and $(q_t,q_x)\in\partial^P V(t,x)$. Since the map $t\leadsto\bar{F}(t,x)$ is locally Lipschitz continuous for each $x$, one easily obtains condition viii) from \eqref{eqn:super_sol}. This concludes the proof.
\end{proof}

\begin{remark}\label{rem:final}
Let us make some further comments on the implications of Theorem \ref{thm:char_cone}. Assume that the multifunction $\bar{F}:\mathbb{R}^{1+n}\leadsto \mathbb{R}^n$ is continuous \textcolor{black}{on} $\mathcal{D}$. Then the condition viii), Theorem \ref{thm:char_cone} becomes the usual inequality 
$$q_t+\min_{v\in \bar{F}(t,x)}v\cdot q_x \geq 0,\qquad \forall\;\; (q_t,q_x)\in\partial^P V(t,x).$$
Under these assumptions, the generalised solution characterised by conditions vii)-viii), Theorem \ref{thm:char_cone}, can be interpreted as the classic notion of \textit{viscosity solution} in $\mathcal{D}$ (see e.g. \cite{clarke1995qualitative}). Indeed, fix $(t,x)\in \D$ and assume that $V$ is differentiable at $(t,x)\in\mathcal{D}$. When $(t,x)\in \mathcal{D}\setminus (\mathrm{Gr}\,\T)$, conditions vii)-viii) provide the Hamilton-Jacobi equation
\begin{equation}\label{HJ1}
\partial_t V(t,x)+ \min_{v\in \bar{F}(t,x)}\left\{v\cdot \nabla_x V(t,x)\right\}=0
\end{equation}
and, when $(t,x)\in \mathrm{Gr}\T$, conditions vii)-viii) together with condition i) yield the following Hamilton-Jacobi equation
\begin{equation}\label{HJ2}
\min\left\{(W-V)(t,x), \partial_t V(t,x)+ \min_{v\in \bar{F}(t,x)}v\cdot \nabla_x V(t,x)\right\}=0.
\end{equation}
While the  equation \eqref{HJ1} reflects the need of hitting the target $\mathrm{Gr}\T$ (as it happens in the minimum time problem), the equation \eqref{HJ2} is motivated by the search of a minimum point of $W$ in  $\mathrm{Gr}\T$. \textcolor{black}{As a matter of fact, the obtained Hamilton-Jacobi relations related to the free-time optimal control problem with moving target $(P)$ are distinct from the standard Hamilton-Jacobi relations obtained for fixed time optimal control problems (see, e.g. \cite{vinter2010optimal}). Indeed, the relations i)-viii) of Theorem \ref{thm:char_cone} boil down to solving a different partial differential equation with different boundary conditions.}

   It is also interesting to observe that, when $\bar{F}$ is continuous \textcolor{black}{on} $\mathcal{D}$, then one can prove, following the \textcolor{black}{same} approach \textcolor{black}{outlined} in \cite{donchev2015value}, that conditions  vii)-viii) of Theorem \ref{thm:char_cone} hold true even if the value function $V$ is just continuous. \textcolor{black}{Indeed, the approach in \cite{donchev2015value} is based on a use of the Rockafellar's Horizontal Theorem to represent the horizontal vectors to $\mathrm{epi}(V)$ (or $\mathrm{hypo} (V)$) and a passage to limit through the Hamiltonians in order to obtain conditions  vii)-viii), Theorem \ref{thm:char_cone}.  But such a procedure fails when the dynamics $\bar{F}$ (and so the Hamiltonian) is discontinuous.}

\textcolor{black}{When the multifunction  $\bar{F}:\mathbb{R}^{1+n}\leadsto \mathbb{R}^n$ is merely upper-semicontinuous,  then also the equivalence result between different notions of generalized solutions provided in \cite{clarke1995qualitative}  fails.} In fact, the proofs in \cite{clarke1995qualitative} rely on a density argument among different sub/super-differentials and \textcolor{black}{(again)} in a simple limit taking through the continuous Hamiltonian. Of course, this last step breaks down when the Hamiltonian is discontinuous. 

As a consequence, different definitions of viscosity solutions have been introduced in the literature according to the specific problem needs. For instance, in \cite{giga2013hamilton}, the authors use a definition of viscosity solution based on the notion of limiting sub-differential, while in \cite{serea2003reflecting} a notion of modified viscosity solution is provided to take into account the specific discontinuity arising in reflecting boundary optimal control problems.  The notion of viscosity solution employed in  v)-vi), Theorem \ref{thm:char_cone} is closely related to the one provided in \cite{frankowska1995measurable}, Theorem 5.8, for Hamilton-Jacobi equations with Hamiltonian measurable in time. However, let us stress that Theorem \ref{thm:char_cone} deals with an optimal control problem with ``state" discontinuity, rather than ``time" discontinuity. In this sense, conditions vii)-viii), Theorem \ref{thm:char_cone}, provide important information on the direction along which the Hamiltonian inequalities hold. This will be further discuss in the next Section. Furthermore, it is important to mention that conditions vii)-viii), Theorem \ref{thm:char_cone} rely on the local Lipschitz continuity of the value function $V$ and could not be obtained if $V$ were merely continuous.
\end{remark}

\section{A toy example}\label{sec:ex}
Let us consider the following optimal control problem
\begin{equation}
(P_{ex})
    \begin{cases}
    \mathrm{Minimize}\ W(T,v(T))\\
    \dot{v}(t)\in\ u-\frac{u^2}{2}\partial_v\varphi(v),\ \mathrm{a.e.}\ t\in[t_0,T]\\
    u(t)\in[-2,2]\qquad \mathrm{a.e.}\ t\in[t_0,T]\\
    v(t_0)=v_0\in\R,\\
    (T,v(T))\in\mathrm{Gr}\T\subseteq \R^2
    \end{cases}
\end{equation}
in which $t_0\in \R$,    $\T: \R\leadsto \R^{+}$, $\T(t)=[r,+\infty)$, $W(t,v)=Ct+(1/v^2)$ for some constants $C$, $r>0$ such that $Cr^3\leq1$ and
\begin{align*}
    &\varphi(v)=\begin{cases}
    &0\qquad v<0\\
    &v\qquad v\geq0
    \end{cases} .
\end{align*}
It is easy to see that $(P_{ex})$ is a special case of the general optimal control problem $(P)$, in which the data are defined as $g(t,v,u)=u$, $k(t,v,u,\alpha)=u^2\alpha$, $A=[0,1]$, $\mu(\alpha)=\delta_{\frac{1}{2}}(\alpha)$ and
\begin{align*}
    &F(t,v,u)=u-\frac{u^2}{2}\partial_v\varphi(v).
    \end{align*}
Furthermore, a straightforward computation shows that
\begin{align*}
    &\bar{F}(t,v)=\cup_{u\in U}F(t,v,u)=
    \begin{cases}
    &[-4,\ \frac{1}{2}]\qquad v>0\\
    &[-4,\ 2]\qquad v=0\\
    &[-2,\ 2]\qquad v<0
    \end{cases},
\end{align*}
and that hypothesis $H_1$-$H_7$ and condition \textbf{(GC)} are satisfied (notice that the Lipschitz continuity of the cost function $W$ is required merely in  a neighborhood of the target). Furthermore, by definition of $t\leadsto\T(t)$, $(\bar{t},\bar{v})\in\mathrm{Bd}(\mathrm{Gr}\,\T)$ if and only if $(\bar{t},\bar{v})=(\bar{t},r)$ for some $\bar{t}\in \R$. It follows that $(l^0,l)\in N_{\mathrm{Gr}\T}^P(\bar{t},r)$, $|(l^0,l)|=1$  if and only if $(l^0,l)=(0,-1)$. Hence, \textbf{(IPC)} is satisfied because, for any compact $G\subseteq\R^{2}$ and any $(\bar{t},r)\in\mathrm{Bd}(\mathrm{Gr}\,\T)\cap G$, one has 
$$\min_{\xi\in\bar{F}(\bar{t},r)}(l^0+\left<l,\xi\right>)=-\frac{1}{2}<0$$
The problem $(P_{ex})$ is describing the velocity $v(t)$ of an object that is moving, assuming that the friction acts only in one direction (see Figure \ref{fig2}). When the velocity is negative, 
one can choose the control without taking into account the effect of the friction. When the velocity is positive, the friction  reduces the velocity and one has to choose the control providing the maximum of the difference between velocity and friction. 
The target describes a minimum velocity requirement for the optimal solution of the problem.
\begin{figure}[h!]
    \centering
    \includegraphics[width=0.4\textwidth]{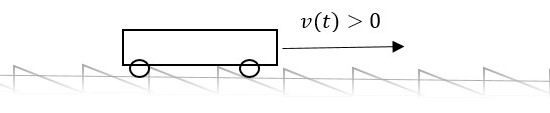}
    \caption{Example of a one side friction framework}
    \label{fig2}
\end{figure}\\
It is natural to guess that the optimal control will be $\bar{u}(t)=2$ for all $t$ such that $v(t)\leq0$. 
Furthermore, when $v(t)>0$ and $v(t)<r$, the optimal control should be positive (to reach the target region) and related to  the maximum velocity  (in order to minimise the time in the cost function). Hence
\begin{align*}
&\partial_u\left\{u-\frac{u^2}{2}\partial_v\varphi(v)\right\}=\partial_u\left\{u-\frac{u^2}{2}\right\}=0 \\
&\rightarrow\ |u|=1\ \rightarrow\ \bar{u}(t)=1.
\end{align*}
Furthermore, $(P_{ex})$ is an example in which the optimal solution does not stop as soon as the target is reached.
Let us study the behaviour of $W(t,\bar{v}(t))$, when $\bar{v}(t)>0$ and $\dot{\bar{v}}(t)\in F(t,\bar{v},\bar{u})=F(t,\bar{v},1)=\frac{1}{2}$:
$$\frac{d}{dt}W(\bar{t},v(\bar{t}))=C-\frac{1}{\bar{v}(\bar{t})^3}=0\ \leftrightarrow\ \bar{v}(\bar{t})=v^*=\sqrt[3]{\frac{1}{C}}\geq r$$
where the last inequality holds since $Cr^3\leq 1$. On the other hand, if  $Cr^3$ were larger than $1$, then $v^*\not\in\T(t)$ and the optimal solution would stop as soon as it reaches the $\mathrm{Gr}\,\T$.
The previous analysis shows that the  guessed optimal control is
\begin{align*}
    \bar{u}(t)=\begin{cases}
    &2\qquad v(t)<0\\
    &1\qquad 0\leq v(t)< v^*\\
    &0\qquad v(t)\geq v^*
    \end{cases},
\end{align*}
and, for any $(t_0,v_0)\in \R^2$, one can also guess that  the value function is
\begin{equation}\label{V_ex}
    V(t_0,v_0)=\begin{cases}
    &Ct_0+\frac{1}{v_0^2}\qquad\qquad\qquad\qquad\ v_0\geq v^*\\
    &2Cv^*-2Cv_0+Ct_0+\frac{1}{{v^*}^2}\quad 0\leq v_0<v^*\\
    &2Cv^*-C\frac{v_0}{2}+Ct_0+\frac{1}{{v^*}^2}\quad\ v_0<0
    \end{cases} .
\end{equation}
We will now  verify that the Hamilton-Jacobi inequalities of Theorem \ref{thm:char_cone} are satisfied by the guessed value function \eqref{V_ex}. In fact, if $V$ is differentiable at $(t_0,v_0)\in\R^2$, then $\partial_tV(t_0,v_0)=C$ and
\begin{align*}
    &\partial_xV(t_0,v_0)=\begin{cases}
    &-\frac{2}{v_0^3}\qquad\ v_0\geq v^*\\
    &-2C\qquad 0< v_0\leq v^*\\
    &-\frac{C}{2}\qquad\ v_0<0 
    \end{cases}.
\end{align*}
Let us also notice that the value function is differentiable at each point except at the origin.
If $v_0\geq v^*$, one can show that \eqref{HJ2} is satisfied. \textcolor{black}{Indeed}, in this case $\bar{F}(t_0,v_0)=[-4,\ \frac{1}{2}]$, $V(t_0,v_0)=W(t_0,v_0)$ and
\begin{align*}
&\partial_tV(t_0,v_0)+\min_{w\in\bar{F}(t_0,v_0)}(w\cdot\partial_vV(t_0,v_0))=\\
&C+\frac{1}{2}\cdot(-\frac{2}{v_0^3})\geq C-\frac{1}{{v^*}^3}=0.
\end{align*}
When $0<v_0\leq v^*$, then $\bar{F}(t_0,v_0)=[-4,\ \frac{1}{2}]$, $V(t_0,v_0)\leq W(t_0,v_0)$ and
\begin{align*}
&\partial_tV(t_0,v_0)+\min_{w\in\bar{F}(t_0,v_0)}(w\cdot\partial_vV(t_0,v_0))=\\
&C+\frac{1}{2}\cdot(-2C)=0,\end{align*}
showing that both equation \eqref{HJ2} (which is valid  when $v_0$ is in the target, namely $r\leq v_0\leq v^*)$ and equation \eqref{HJ1} (valid outside the target) are satisfied.
If $v_0< 0$, then $\bar{F}(t_0,v_0)=[-2,\ 2]$ and 
\begin{align*}
&\partial_tV(t_0,v_0)+\min_{w\in\bar{F}(t_0,v_0)}(w\cdot\partial_vV(t_0,v_0))=\\
&C+2\cdot(-\frac{C}{2})=0,\end{align*}
showing that \eqref{HJ1} is satisfied.
It remains to check what happens at the points in $(t_0,v_0=0)$, $t_0\in \R$. 
In particular, it is easy to check that $\partial_PV(t_0,0)=\emptyset$ for all $t_0\in\R$. Then it is enough to check that condition viii) of Theorem \ref{thm:char_cone} is verified. Let us observe that 
 \begin{equation}
\partial^PV(t_0,0)=\left\{q\in\R^{1+n}|\ q=(C,q_{v}),\ q_{v}\in\left[-2C,-\frac{C}{2}\right]\right\}\end{equation}
Take $(C,q_{v})\in \partial^PV(t_0,0)$. Since $v'\rightarrow_{-q_v}0$ if and only if $v'>0$ and $\bar{F}(t_0,v')=[-4,\frac{1}{2}]$, relation viii) of Theorem \ref{thm:char_cone} is
\begin{align*}
    &C+\liminf_{v'\rightarrow_-q_{v}0}\left\{\min_{w\in[-4,\frac{1}{2}]}w\cdot q_{v}\right\}=C+\frac{q_{v}}{2}\geq0
\end{align*}
for all $q_{v}\in[-2C,-\frac{C}{2}]$. In view of Theorem \ref{thm:char_cone}, one can conclude that  \eqref{V_ex} is the value function of the optimal control problem $(P_{ex})$. As a further implication of condition viii), Theorem \ref{thm:char_cone}, it is interesting to observe that, for any $t>0$, the optimal feedback acceleration at $w^*(t, 0)=1/2$ (and the related optimal feedback control $u^*(t,0)=1$)   is not obtained as the point in $\bar{F}(t,0)$ which realizes the minimized Hamiltonian:
$$w^{*}(t, 0)\neq \mathrm{arg}\min_{w\in \bar{F}(t,0) }\left\{ q_v\cdot w\right\},\quad \forall q_v\,\in\left[-2C,-\frac{C}{2}\right].$$
However, $w^*(t, 0)=1/2$ is obtained as 
$$w^{*}(t, 0)= \lim_{v'\rightarrow 0^+} \left[\mathrm{arg}\min_{w\in \bar{F}(t,v') } q_v\cdot w\right],\; \forall q_v\,\in\left[-2C,-\frac{C}{2}\right].$$
The information in vii)-viii), Theorem \ref{thm:char_cone} provides the way to compute the optimal feedback control as a limit of the Hamiltonian ``argmin" from the ``right direction".

\begin{remark}\label{rem:opt_syn}
In the case in which the multifunction $\bar{F}:\mathbb{R}^{1+n}\leadsto \mathbb{R}^n$ is Lipschitz continuous \textcolor{black}{on} $\mathcal{D}$, well-known sensitivity relations can be applied in order to link a suitable generalized gradient of the value function $V$ to the optimal feedback control (see, e.g., \cite{vinter2010optimal}, Section 12.5 and \cite{cannarsa2000optimality} for a free time optimal control version). When the multifunction $\bar{F}:\mathbb{R}^{1+n}\leadsto \mathbb{R}^n$ is merely upper semicontinuous, then it is much harder to relate the value function to the optimal control. To our knowledge, such a shortcoming is mainly due to the lack of a set of necessary conditions when the dynamics is discontinuous w.r.t. the state variable. However, as the previous example shows, it is not hard to conjecture that any theory or method of optimal synthesis has to take into account the information provided by conditions vii)-viii) of Theorem \ref{thm:char_cone}.
\end{remark}

\section{Conclusions and Future Works}
The main result of this paper is a characterisation of the value function of a free time optimal control problem subject to a controlled differential inclusion as unique, continuous viscosity solution of a related Hamilton-Jacobi equation. The dynamics arises from a class of systems in which the friction is represented by an averaged, upper semi-continuous, controlled differential inclusion.  Under general assumptions, we show that the dynamic equation is well-posed and that the related optimal control problem admits solutions. 

Several theoretical questions (such as controllability, necessary optimality conditions etc.)  and algorithmic considerations related to the present framework can be  considered as future research directions.
Furthermore, the theory provided in this paper will be useful to describe a wide class of phenomena, in which a mechanical constraint producing friction is concerned.  As a further research direction, we plan to use some of the results provided in this paper to model the morphological growth of living tissues, such as roots, stems and vines, in the spirit of \textcolor{black}{\cite{del2019characterization, tedone2020optimal}.}

\section{Appendix}
 Given a closed set $\mathcal{C}\subseteq\R^{1+n}$ and $v\in\R^{1+n}$, let us use 
$$\mathcal{P}_{\mathcal{C}}(v)=\left\{w\in \mathcal{C}|\ |v-w|=d(v,\mathcal{C})\right\}$$
to denote the set of all projections of $v$ on $\mathcal{C}$. 
According to (\cite{vinter2010optimal}, Proposition $4.2.2$), given $w\in \mathcal{C}$, the proximal normal cone $N_{\mathcal{C}}^P(w)$ is characterised as 
\begin{align*}
    N_{\mathcal{C}}^P(w)=\left\{v\in\R^m|\ \exists\ \alpha>0\ \mathrm{s.t.}\ w\in \mathcal{P}_{\mathcal{C}}(w+\alpha v)\right\}.
\end{align*}
Let us define the lower Dini derivative of a function $\psi:\R^{1+n}\rightarrow\R$ in the direction of $(1,y)$ as
$$D^-\psi(t,x;1,y):=\liminf_{h\rightarrow0^+}\frac{\psi(t+h,x+hy)-\psi(t,x)}{h}.$$
To prove Proposition \ref{prop:claim}, we need the following result:
\begin{proposition}\label{prop:tech}
Assume $H_1$-$H_6$ hold and $\mathrm{Gr}\,\T$ satisfies \textbf{(IPC)}. Then, for any compact set $K\subseteq\R^{1+n}$, there exist $\theta,\ \rho>0$ and $L>0$ such that the multifunction $\Phi:\R^{1+n}\leadsto \R^n$ defined as
\begin{equation}\label{def_Phi}
     \Phi(t,x)=\left\{y\in \bar{F}(t,x)|\ D^-\psi(t,x;1,y)\leq\Psi(t,x)\right\}
\end{equation}
is \textcolor{black}{non-empty}. In the previous equation, for each $(t,x)\in\R^{1+n}$, the functions $\psi(t,x)$ and $\Psi(t,x)$ are defined as:
\begin{align*}
    &\psi(t,x)=d\left((t,x),\mathrm{Gr}\T\right),\\
    &\Psi(t,x)=
    \begin{cases}
    (L_F+L)\psi(t,x)-\rho\ &(t,x)\not\in\mathrm{Gr}\T\mathrm{\ and\ }\\&(t,x)\in \mathcal{O}_{\theta}\\
    +\infty\ &\mathrm{otherwise}
    \end{cases},
\end{align*}
where  $\mathcal{O}_{\theta}:=\left(\mathrm{Gr}\T\cap K\right)+\theta\B$ and $L_F$ is the local one-sided Lipschitz constant of $F$ in $\bar{\mathcal{O}}_{1}$.
\end{proposition}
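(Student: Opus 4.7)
The plan is to split by cases according to where $(t,x)$ lies and, in the only non-trivial regime, to combine the inward-pointing condition at the projection onto $\mathrm{Gr}\,\mathcal{T}$ with the OSL relation \eqref{eqn:donchev_osl} derived in Proposition \ref{prop:properties}. When $(t,x)\in\mathrm{Gr}\,\mathcal{T}$ or $(t,x)\notin\mathcal{O}_\theta$ one has $\Psi(t,x)=+\infty$, so $\Phi(t,x)=\bar{F}(t,x)\neq\emptyset$ by Proposition \ref{prop:properties}. All the real work is therefore in the case $(t,x)\in\mathcal{O}_\theta\setminus\mathrm{Gr}\,\mathcal{T}$.

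In this regime, I would first fix a projection $(\bar t,\bar x)\in\mathcal{P}_{\mathrm{Gr}\,\mathcal{T}}(t,x)$, which exists because $\mathrm{Gr}\,\mathcal{T}$ is closed by $H_6$ and, since $(t,x)\notin\mathrm{Gr}\,\mathcal{T}$, necessarily lies on $\mathrm{Bd}(\mathrm{Gr}\,\mathcal{T})$. A triangle-inequality argument confines $(\bar t,\bar x)$ to the compact set $G:=\mathrm{Gr}\,\mathcal{T}\cap(K+2\theta\bar{\B})$ once $\theta$ is taken small enough. The vector $(l^0,l):=\psi(t,x)^{-1}((t-\bar t),(x-\bar x))$ is a unit element of $N^{P}_{\mathrm{Gr}\,\mathcal{T}}(\bar t,\bar x)$, so \textbf{(IPC)} applied to $G$ produces a number $\rho>0$ and an element $\bar\xi\in\bar{F}(\bar t,\bar x)$ with
\begin{equation*}
(t-\bar t)+\langle\bar\xi,x-\bar x\rangle\leq -\rho\,\psi(t,x).
\end{equation*}

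The next step is to transfer this estimate from $\bar{F}(\bar t,\bar x)$ onto $\bar{F}(t,x)$. Choosing $\theta\leq 1$ ensures that $(t,x),(\bar t,\bar x)\in\bar{\mathcal{O}}_{1}$, so I may invoke \eqref{eqn:donchev_osl} of Proposition \ref{prop:properties} with $y_{1}=(\bar t,\bar x)$, $y_{2}=(t,x)$ and the roles of $x_{1},x_{2}$ interchanged, turning the statement about suprema into one about infima. Since $\bar{F}(t,x)$ is compact, the resulting infimum is attained by some $y^{*}\in\bar{F}(t,x)$ for which
\begin{equation*}
\langle y^{*},x-\bar x\rangle\leq\langle\bar\xi,x-\bar x\rangle+L_{\bar{F}}\,\psi(t,x)^{2},
\end{equation*}
with $L_{\bar{F}}=L_{F}+L$ in the notation of Proposition \ref{prop:properties}, where $L$ denotes the common time-Lipschitz constant of $g,k$ on $\bar{\mathcal{O}}_{1}$ from $H_{2}$.

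To close the argument I would use the standard Dini estimate for a distance function: for any $y\in\R^{n}$,
\begin{equation*}
D^{-}\psi(t,x;1,y)\leq\frac{(t-\bar t)+\langle y,x-\bar x\rangle}{\psi(t,x)},
\end{equation*}
obtained by majorising $\psi(t+h,x+hy)$ by $|(t+h,x+hy)-(\bar t,\bar x)|$ and Taylor-expanding the square root in $h$. Plugging in $y=y^{*}$ and chaining the two previous inequalities gives $D^{-}\psi(t,x;1,y^{*})\leq -\rho+(L_{F}+L)\psi(t,x)=\Psi(t,x)$, so $y^{*}\in\Phi(t,x)$. The main technical point I would watch is the selection step: relation \eqref{eqn:donchev_osl} is phrased via the support functions of $\bar{F}$, and one has to pass from an inequality between two extrema to the existence of an actual point $y^{*}\in\bar{F}(t,x)$ realising the required scalar-product bound. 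This rests squarely on the compactness and closed-graph properties of $\bar{F}$ proved in Proposition \ref{prop:properties}, together with the careful re-labeling of the OSL inequality to produce the minimum version used above.
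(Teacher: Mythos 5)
Your proof is correct and follows the same overall strategy as the paper (trivial cases, projection onto $\mathrm{Gr}\,\T$, unit proximal normal, \textbf{(IPC)} at the projection, then the Dini-derivative estimate for the distance function), but the key transfer step is implemented differently. The paper keeps track of the control: it picks $\bar{u}\in U$ with $\bar{\xi}\in F(\bar{t},\bar{x},\bar{u})$ realizing \textbf{(IPC)}, takes \emph{any} $\xi\in F(t,x,\bar{u})$, and bounds $\left<x-\bar{x},\xi-\bar{\xi}\right>$ through an intermediate point $\xi^{*}\in F(\bar{t},x,\bar{u})$, using the time-Lipschitz continuity of $t\mapsto F(t,x,u)$ together with the per-control OSL property of $x\mapsto F(t,x,u)$; this shows the whole set $F(t,x,\bar{u})$ lies in $\Phi(t,x)$. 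You instead reuse the already-established support-function inequality \eqref{eqn:donchev_osl} with the direction reversed (i.e.\ with $x_1-x_2=\bar{x}-x$), which correctly converts the statement about maxima into $\min_{w\in\bar{F}(t,x)}\left<w,x-\bar{x}\right>\leq\left<\bar{\xi},x-\bar{x}\right>+L_{\bar{F}}\psi^{2}$, and compactness of $\bar{F}(t,x)$ gives the minimizer $y^{*}\in\Phi(t,x)$; this is slightly more economical and yields the same constant $L_{\bar F}=L_F+L$, though it produces only one element rather than a whole control-indexed subset (which suffices, since only non-emptiness is claimed). One quantitative slip: to have both $(t,x)$ and its projection $(\bar{t},\bar{x})$ in $\bar{\mathcal{O}}_{1}$ (so that the constants $L_F,L$ attached to $\bar{\mathcal{O}}_{1}$ apply) you need $\theta\leq 1/2$, not merely $\theta\leq 1$, since $(\bar{t},\bar{x})$ is only within $2\theta$ of $\mathrm{Gr}\,\T\cap K$; this matches the paper's choice $\theta<0.5$ and is a trivial fix.
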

\begin{proof}{\textit{Proof of Proposition 12. }}
Fix $K\subset \R^{1+n}$ a compact set. Take $\mathcal{O}_{\theta}=(\mathrm{Gr}\T\cap K)+\theta\B$ for some $\theta>0$ and $G=\bar{\mathcal{O}}_1$. Let $[\tau_1,\tau_2]=\mathrm{co}\mathcal{P}_{\R}(G)$.
Fix $\theta$ such that
$$\theta<\min\left\{0.5,\frac{1}{L_G+1}\right\},$$
where $L_G$ is such that $|F(t,x,u)|\leq L_G$ for all $(t,x,u)\in[\tau_1,\tau_2]\times \mathrm{co}\mathcal{P}_{\R^n}(G)\times U$ (notice that $L_G$ exists in view of the hypothesis $H_3$). With the choice of $G=\bar{\mathcal{O}}_1$, let us take $\rho>0$ such that \textbf{(IPC)} holds true. It is a straightforward matter to check that $\Psi(t,x)$ is upper semi-continuous. Furthermore, by possibly reducing the size of $\rho$, it is not restrictive to suppose that there exists $0<\rho<1$ satisfying \textbf{(IPC)} with the choice $G=\bar{\mathcal{O}}_1$.
Let us prove that the multifunction $\Phi:\R^{1+n}\leadsto \R^n$ defined in \eqref{def_Phi} is such that $\Phi(t,x)\neq\emptyset$ for every $(t,x)\in\R^{1+n}$. 
If $(t,x)\in\mathrm{Gr}\T$ or $(t,x)\not\in \mathcal{O}_{\theta}$, then $\Phi(t,x)=\bar{F}(t,x)\neq\emptyset$. Otherwise, define
$$(l^0,l)=\frac{(t-\bar{t},x-\bar{x})}{|(t,x)-(\bar{t},\bar{x})|}\in N_{\mathrm{Gr}\T}^P(\bar{t},\bar{x}),\ |(l^0,l)|=1,$$ 
for any $(\bar{t},\bar{x})\in\mathcal{P}_{\mathrm{Gr}\T}(t,x)$. Being $(\bar{t},\bar{x})$ a projection of $(t,x)$ on $\mathrm{Gr}\T$, then $(\bar{t},\bar{x})\in\mathrm{Bd}(\mathrm{Gr}\,\T)$ and one has
$$|(\bar{t},\bar{x})-(t,x)|=d((t,x),\mathrm{Gr}\T)<\theta<1.$$
In particular, this implies $(\bar{t},\bar{x})\in G\cap\mathrm{Bd}(\mathrm{Gr}\,\T)$. In view of \textbf{(IPC)}, there exists $\bar{\xi}\in \bar{F}(\bar{t},\bar{x})$ (and then $\bar{u}\in U$  for which $\bar{\xi}\in F(\bar{t},\bar{x},\bar{u})$) such that
$$l^0+\left<l,\bar{\xi}\right>\leq-\rho.$$ 
Take any $\xi\in F(t,x,\bar{u})$. Then one can obtain the following estimates:
\begin{equation}\label{eq_app}
\begin{array}{ll}
   &  l^0+\left<l,\bar{\xi}-\xi+\xi\right>=l^0+\left<l,\xi\right>+\left<l,\bar{\xi}-\xi\right>\leq-\rho\implies\\
   &  l^0+\left<l,\xi\right>\leq-\rho+\left<l,\xi-\bar{\xi}\right>=-\rho+\frac{\left<x-\bar{x},\xi-\bar{\xi}\right>}{d((t,x),\mathrm{Gr}\T)}.
    \end{array}
\end{equation}
Since the mapping $t\mapsto F(t,x,u)$ is locally Lipschitz continuous uniformly w.r.t. $(x,u)\in \mathrm{co}\mathcal{P}_{\mathbb{R}^n}(G)\times U$, there exist $L>0$ and $\xi^*\in F(\bar{t},x,\bar{u})$ such that $|\xi^*-\xi|\leq L|\bar{t}-t|$. It is then possible to estimate the right hand side of \eqref{eq_app} as follows:
\begin{align*} 
&\frac{\left<x-\bar{x},\xi-\bar{\xi}\right>}{d((t,x),\mathrm{Gr}\T)}=\\
&\frac{\left<x-\bar{x},\xi-\xi^*\right>}{d((t,x),\mathrm{Gr}\T)}+\frac{\left<x-\bar{x},\xi^*-\bar{\xi}\right>}{d((t,x),\mathrm{Gr}\T)}\leq\\
&\frac{|x-\bar{x}||\xi-\xi^*|}{d((t,x),\mathrm{Gr}\T)}+\frac{L_F|x-\bar{x}|^2}{d((t,x),\mathrm{Gr}\T)}\leq\\
&\frac{L|t-\bar{t}||x-\bar{x}|}{d((t,x),\mathrm{Gr}\T)}+\frac{L_F|x-\bar{x}|^2}{d((t,x),\mathrm{Gr}\T)}\leq\\
&\frac{L|(t,x)-(\bar{t},\bar{x})|^2}{d((t,x),\mathrm{Gr}\T)}+\frac{L_F|(t,x)-(\bar{t},\bar{x})|^2}{d((t,x),\mathrm{Gr}\T)},
\end{align*}
where, in turns, we have used the  one-sided locally Lipschitz property of $x\mapsto F(t,x,u)$, uniformly w.r.t. $(t,u)\in \mathrm{co}\mathcal{P}_{\mathbb{R}}(G)\times U$, and the locally Lipschitz continuity of the mapping $t\mapsto F(t,x,u)$, uniformly w.r.t. $(x,u)\in \mathrm{co}\mathcal{P}_{\mathbb{R}^n}(G)\times U$. This analysis shows that
$$l^0+\left<l,\xi\right>\leq-\rho+(L_F+L)d((t,x),\mathrm{Gr}\T).$$
Let us show that $\xi\in\Phi(t,x)$. Since $(\bar{t},\bar{x})\in\mathrm{Gr}\T$ and $(t,x)\notin\mathrm{Gr}\T$, then $\psi(t,x)>0$ and one can compute 
\begin{align*}
    &D^-\psi(t,x;1,\xi)=\\
    &\liminf_{h\to0^+}\frac{\psi^2(t+h,x+h\xi)-\psi^2(t,x)}{h(\psi(t+h,x+h\xi)+\psi(t,x))}\leq\\
    &\liminf_{h\to0^+}\frac{|(t+h-\bar{t},x+h\xi-\bar{x})|^2-|(t-\bar{t},x-\bar{x})|^2}{h(\psi(t+h,x+h\xi)+\psi(t,x))}=\\
    &\frac{t-\bar{t}+\left<x-\bar{x},\xi\right>}{\psi(t,x)}=l^0+\left<l,\xi\right>\leq\Psi(t,x).
\end{align*}
Thus, $\xi\in\Phi(t,x)$. This completes the proof.
\end{proof}

\begin{proof}{\emph{Proof of Proposition \ref{prop:claim}}.}\label{proof:claim}
Since we are in the same hypothesis of Proposition \ref{prop:tech}, \textcolor{black}{let us fix a compact set $K\subset \R^{1+n}$}. Define $\mathcal{O}_1=\left(\mathrm{Gr}\T\cap K\right)+\B$ and $G=\bar{\mathcal{O}}_1$. Let $[\tau_1,\tau_2]=\mathrm{co}\mathcal{P}_{\R}(G)$ and fix $L_G$ such that $|F(t,x,u)|\leq L_G$ for all $(t,x,u)\in[\tau_1,\tau_2]\times \mathrm{co}\mathcal{P}_{\R^n}(G)\times U$. Define \textcolor{black}{the constants $\theta,\ \rho,\ L,\ L_F$ and the set  $\mathcal{O}_{\theta}$} such that Proposition \ref{prop:tech} is satisfied.
Fix $\e>0$ such that
$$\e<\min\left\{1-\theta(L_G+1),\frac{\theta\rho}{C},\frac{\theta(1-\rho)}{C}\right\},$$
where $C=\exp\bigg(\int_{\tau_1}^{\tau_2}\left(L_F+L\right)ds\bigg)$. Let $(t_0,x_0)\in \bar{\mathcal{O}}_{\e}=(\mathrm{Gr}\T\cap K)+\e\bar{\B}$. 
If $(t_0,x_0)\in\mathrm{Gr}\T$, then \eqref{eqn:claim} is trivially satisfied. Alternatively, observe that $(t_0,x_0)\in \mathcal{\bar{O}}_{\e}\subseteq G$ and $x_0\in\mathcal{P}_{\R^n}(G)$. Define $T=t_0+\theta$. Therefore, for any solution $\tilde{x}(\cdot)$ of \eqref{eqn:dynamics} starting from $(t_0,x_0)$ with control $\tilde{u}(\cdot)$, one has
$$|\tilde{x}(t)-x_0|\leq\int_{t_0}^t|F(s,\tilde{x}(s),\tilde{u}(s))|ds\leq L_G|t-t_0|\leq L_G\theta,$$
and $(t,x(t))\in (t_0,x_0)+(L_G+1)\theta\bar{\B}$ for all $t\in[t_0,T]$. Since $\theta(L_G+1)<1-\e<1$, then $(t,\tilde{x}(t))\in G$ for all $t\in[t_0,T]$. Then, in view of Proposition \ref{prop:tech} and a well known selection theorem (see \cite{veliov1997lipschitz}, Proposition 2.1) there exists a solution to 
\begin{equation}\label{eq_Phi}
\dot{x}(t)\in\Phi(t,x(t)),\ x(t_0)=x_0,\ t\in[t_0,T].
\end{equation}
Le us define $\bar{T}\in[t_0,\infty)$ as the first time such that the given solution $x(\cdot)$ of \eqref{eq_Phi} satisfies either $(\bar{T},x(\bar{T}))\in\mathrm{Gr}\T$ or $(\bar{T},x(\bar{T}))\in\mathrm{Bd}(\mathcal{O}_{\theta})$. If there is not such a  $\bar{T}$, let us fix $\bar{T}:=t_0+\theta=T$. Setting $\tilde{d}(t)=d((t,x(t)),\mathrm{Gr}\T)$,  it follows from  \eqref{def_Phi} that, for all $t\in[t_0,\bar{T}]$,  $\tilde{d}(t)$ satisfies
\begin{align*}
    &\dot{\tilde{d}}(t)=D^-\psi(t,x(t);1,\dot{x}(t))\leq (L_F+L)\tilde{d}(t)-\rho,\\
    &\tilde{d}(t_0)=d((t_0,x_0),\mathrm{Gr}\T)\leq\e .
\end{align*}
The definitions of $\tau_1, \tau_2$ imply that $\tau_1\leq t_0\leq t_0+\theta\leq\tau_2$. Using \textcolor{black}{the} \textcolor{black}{Gr\"onwall's} Lemma, then one easily estimates
\begin{align}
&\tilde{d}(t)\leq\exp\bigg(\int_{\tau_1}^{\tau_2}\left(L_F+L\right)ds\bigg)\tilde{d}(t_0)-(t-t_0)\rho=\\
&C\tilde{d}(t_0)-(t-t_0)\rho<C\e-(t-t_0)\rho \label{s=0}.
\end{align}
Recall that the differential equation for $\tilde{d}(\cdot)$ is studied on $[t_0,\bar{T}]$ so that, since $C\e<C\e/\rho\leq\theta$, one has that $\tilde{d}(t)<\theta$ for all $t\in[t_0,\bar{T}]$. Hence, $(t,x(t))\in \mathcal{O}_{\theta}$ for all $t\in[t_0,\bar{T}]$. In particular, $\tilde{d}(t)=0$ if
 \begin{equation}\label{stima_distanza}
 t=t_0+\frac{C\tilde{d}(t_0)}{\rho}<t_0+\theta\implies t-t_0=\frac{C\tilde{d}(t_0)}{\rho}<\theta.
 \end{equation}
This shows that  $\bar{T}\in[t_0,\infty)$ has to be such that $(\bar{T},x(\bar{T}))\in\mathrm{Gr}\T$ and $(\bar{T},x(\bar{T}))\in\mathcal{A}_{(t_0,x_0)}$. 
Furthermore one can obtain the following estimate:
\begin{align}
&\tilde{d}((t_0,x_0),\mathcal{A}(t_0,x_0))\leq |(t_0,x_0)-(\bar{T},x(\bar{T}))|\leq\\
&|t_0-\bar{T}|+|x_0-x(\bar{T})|\leq \frac{C\tilde{d}(t_0)}{\rho}+L_G|t_0-\bar{T}|\leq\\ &\frac{C(L_G+1)}{\rho}\tilde{d}(t_0)=L_K\tilde{d}(t_0), \label{s==0}
\end{align}
where in the last two steps we have used  the relation \eqref{stima_distanza}. This concludes the proof.
\end{proof}\\
\begin{proof}{\emph{Proof of Theorem \ref{prop:value_lips}}.}
Let us show that, for any $(t_0,x_0)\in\D$, there exist $\e_V,\ L_V>0$ such that for all $(t_1,x_1),\ (t_2,x_2)\in \mathcal{N}_{\e_V}:=(t_0,x_0)+\e_V\B$, one has
$$|V(t_1,x_1)-V(t_2,x_2)|\leq L_V|(t_1,x_1)-(t_2,x_2)|.$$
Let us  use $L_W>0$ to denote the Lipschitz constant of $W$ and with $\e_W>0$ the neighborhood radius in which the Lipschitz property of $W$ is verified.\\
Fix $\rho>0$ and $(t_0,x_0)\in \bar{\mathcal{N}}_{\rho}\subset\D$ . Let us  define
$$\bar{T}:=\sup\,\left\{T:\,(T,x(\cdot))\;\mathrm{minimizer}\;\mathrm{of}\;(P)_{(t,x)},\;(t,x)\in \bar{\mathcal{N}}_{\rho}\right\}.$$
Since $\bar{\mathcal{N}}_{\rho}\subset\D$, then $\bar{T}<\infty$.
 Let $(T^*_0, x^*_0(\cdot))$ be the minimizer of the optimal control problem  $P_{(t_0,x_0)}$.
 In view of the hypothesis $H_2$-$H_3$, for all $(t,x)\in \bar{\mathcal{N}}_{\rho}$, there exists a constant $C>0$ such that
 $$\left|(T^*,x^*(T^*))-(T^*_0,x^*_0(T^*_0))\right|\leq 2\bar{T}C+\rho+\bar{T}=:R,$$  
where $(T^*,x^*(\cdot))$  is the minimizer of the optimal control problem $(P)_{(t,x)}$.
Hence, one can fix both $K=(T^*_0,x^*_0(T^*_0))+R\bar{\B}$ and $\e_K,L_K$ such that the statement of Proposition \ref{prop:claim} is satisfied for all $(t,x)\in(\mathrm{Gr}\T\cap K)+\e_K\bar{\B}$. 
\textcolor{black}{Let us now fix} $\e_V>0$ such that
$$\e_V\leq\min\left\{\rho,\frac{\min\{\e_K,\e_W\}}{2\lambda_{r(\rho)}(\bar{T})(L_K+1)}\right\},$$
where $\lambda_{r(\rho)}(\cdot)$ is the function introduced in Remark \ref{rem:traj} with the choice $r(\rho)=|x_0|+\rho$.
Take $(t_1,x_1),\ (t_2,x_2)\in \mathcal{N}_{\e_V}$ such that $V(t_1,x_1)>V(t_2,x_2)$. Let $u_2(\cdot)$ be the optimal control starting from $(t_2,x_2)$ with trajectory $x_2(\cdot)$ and optimal time $T_2$. 
Then, since $\mathcal{N}_{\e_V}\subseteq \mathcal{N}_{\rho}$, one has that also $(T_2,x_2(T_2))\in\mathrm{Gr}\T\cap K$.
It is convenient to distinguish the two cases $T_2>t_2$ and $T_2=t_2$: 

\textbf{CASE 1:} If $T_2>t_2$, then it is not restrictive to  assume also $T_2>t_1$ (it is sufficient to reduce the size of $\e_V$). Let $x_1(\cdot)$ be the trajectory of \eqref{eqn:dynamics} starting from $(t_1,x_1)$ with control $u_2(\cdot)$. Proposition \ref{prop:principle} ensures $V(t_2,x_2)=W(T_2,x_2(T_2))$ and $V(t_1,x_1)\leq V(T_2,x_1(T_2))$.
In view of \eqref{Lipschitz_2}, one has that
\begin{align*}
&|(T_2,x_2(T_2))-(T_2,x_1(T_2))|\leq\\
&\qquad\leq|(t_1,x_1)-(t_2,x_2)|\lambda_{r(\rho)}(T_2)\leq 2\e_V\lambda_{r(\rho)}(\bar{T})<\e_K,
\end{align*}
where $\lambda_{r(\rho)}(\cdot)$ is the function introduced in Remark \ref{rem:traj} with the choice $r(\rho)=|x_0|+\rho$.
Therefore $(T_2,x_1(T_2))\in(\mathrm{Gr}\T\cap K)+\e_K\bar{\B}$. Let us set $(\bar{t}, \bar{x}):=(T_2,x_1(T_2))$ and take $\bar{x}_1(\cdot)$ be a solution of the differential inclusion \eqref{eq_Phi} starting from $(\bar{t},\bar{x})$. Then, in view of the relation \eqref{s=0}, there exists a time $\bar{T}_1\geq \bar{t}$ such that $(\bar{T}_1,\bar{x}_1(\bar{T}_1))\in\mathrm{Gr}\T$. 
In particular, one obtains the inequality $V(t_{1},x_1)\leq W(\bar{T}_1,\bar{x}_1(\bar{T}_1))$.
It then follows from such a construction the estimate
\begin{equation}\label{stima_base_V}
V(t_1,x_1)-V(t_2,x_2)\leq W(\bar{T}_1,\bar{x}_1(\bar{T}_1))-W(T_2,x_2(T_2)).
\end{equation}
Furthermore, let us observe that
\begin{align}
    &|(\bar{T}_1,\bar{x}_1(\bar{T}_1))-(T_2,x_2(T_2))|\leq  \label{stima1}\\ 
    &|(\bar{T}_1,\bar{x}_1(\bar{T}_1))-(\bar{t},\bar{x})|+|(\bar{t},\bar{x})-(T_2,x_2(T_2))|\leq\\
    &|(\bar{T}_1,\bar{x}_1(\bar{T}_1))-(\bar{t},\bar{x})|+\lambda_{r(\rho)}(\bar{T})|(t_1,x_1)-(t_2,x_2)|,
\end{align}
where we have just used the relation \eqref{Lipschitz_2} in the last inequality with the choice $r(\rho)=|x_0|+\rho$. Using now the relation \eqref{stima_distanza} in \eqref{stima1} (as it is done in \eqref{s==0}), one obtains
\begin{align}
    &|(\bar{T}_1,\bar{x}_1(\bar{T}_1))-(\bar{t},\bar{x})|\leq  \label{stima2}\\
    &L_Kd((\bar{t},\bar{x}),\mathrm{Gr}\T)\leq L_K|(\bar{t},\bar{x})-(T_2,x_2(T_2))|.
\end{align}
So one can obtain from \eqref{stima1} and \eqref{stima2} the relevant estimates
\begin{align}
&|(\bar{T}_1,\bar{x}_1(\bar{T}_1))-(T_2,x_2(T_2))|\leq \label{stima3}\\
&(L_K+1)\lambda_{r(\rho)}(\bar{T})|(t_1,x_1)-(t_2,x_2)|\leq\\ &2\lambda_{r(\rho)}(\bar{T})(L_K+1)\e_V\leq\e_W,
\end{align}
where in the first inequality we have used  again the relation \eqref{Lipschitz_2} with the choice $r(\rho)=|x_0|+\rho$.\\
\textbf{CASE 2}: If $T_2=t_2$ then $|(t_2,x_2)-(t_1,x_1)|\leq2\e_V<\e_K$ and $(t_1,x_1)\in(\mathrm{Gr}\T\cap K)+\e_K\bar{\B}$. Let us set $(\bar{t},\bar{x})=(t_1,x_1)$ and, as in the previous case, take  a solution $\bar{x}_1(\cdot)$  of \eqref{eq_Phi} starting from $(\bar{t},\bar{x})$ and a time $\bar{T}_1\geq \bar{t}$ such that $(\bar{T}_1,\bar{x}_1(\bar{T}_1))\in\mathrm{Gr}\T$. Using the same argument employed in \textbf{CASE 1}, one can obtain the relations \eqref{stima_base_V} and \eqref{stima3}.
Therefore, in both \textbf{CASES 1-2}, the hypothesis $H_7$ can be invoked  and it follows from the estimates \eqref{stima_base_V} and \eqref{stima3} that
\begin{align*}
    &V(t_1,x_1)-V(t_2,x_2)\leq W(\bar{T}_1,\bar{x}_1(\bar{T}_1))-W(T_2,x_2(T_2))\leq\\
    &L_W|(\bar{T}_1,\bar{x}_1(\bar{T}_1))-(T_2,x_2(T_2))|\leq\\
    &L_WC_1|(t_1,x_1)-(t_2,x_2)|:=L_V|(t_1,x_1)-(t_2,x_2)|,
\end{align*}
where $C_1=2\lambda_{r(\rho)}(\bar{T})(L_K+1)$ if $T_2>t_2$ and $C_1=L_K+1$ if $T_2=t_2$. This concludes the proof.
\end{proof}
\bigskip

\emph{Acknowledgments.} This project has received funding from the European Union's Horizon 2020.
Research and Innovation Programme under Grant Agreement No 824074.

\bibliographystyle{IEEEtran}
\bibliography{reference}

\begin{thebibliography}{10}
\providecommand{\url}[1]{#1}
\csname url@samestyle\endcsname
\providecommand{\newblock}{\relax}
\providecommand{\bibinfo}[2]{#2}
\providecommand{\BIBentrySTDinterwordspacing}{\spaceskip=0pt\relax}
\providecommand{\BIBentryALTinterwordstretchfactor}{4}
\providecommand{\BIBentryALTinterwordspacing}{\spaceskip=\fontdimen2\font plus
\BIBentryALTinterwordstretchfactor\fontdimen3\font minus
  \fontdimen4\font\relax}
\providecommand{\BIBforeignlanguage}[2]{{%
\expandafter\ifx\csname l@#1\endcsname\relax
\typeout{** WARNING: IEEEtran.bst: No hyphenation pattern has been}%
\typeout{** loaded for the language `#1'. Using the pattern for}%
\typeout{** the default language instead.}%
\else
\language=\csname l@#1\endcsname
\fi
#2}}
\providecommand{\BIBdecl}{\relax}
\BIBdecl

\bibitem{coulomb1821theorie}
C.~A. Coulomb, \emph{Th{\'e}orie des machines simples en ayant {\'e}gard au
  frottement de leurs parties et {\`a} la roideur des cordages}.\hskip 1em plus
  0.5em minus 0.4em\relax Bachelier, 1821.

\bibitem{pennestri2016review}
E.~Pennestr{\`\i}, V.~Rossi, P.~Salvini, and P.~P. Valentini, ``Review and
  comparison of dry friction force models,'' \emph{Nonlinear dynamics},
  vol.~83, no.~4, pp. 1785--1801, 2016.

\bibitem{colombo2016minimum}
G.~Colombo and M.~Palladino, ``The minimum time function for the controlled
  moreau's sweeping process,'' \emph{SIAM Journal on Control and Optimization},
  vol.~54, no.~4, pp. 2036--2062, 2016.

\bibitem{colombo2020optimization}
G.~Colombo, B.~S. Mordukhovich, and D.~Nguyen, ``Optimization of a perturbed
  sweeping process by constrained discontinuous controls,'' \emph{SIAM Journal
  on Control and Optimization}, vol.~58, no.~4, pp. 2678--2709, 2020.

\bibitem{de2019optimal}
M.~d.~R. de~Pinho, M.~Ferreira, and G.~Smirnov, ``Optimal control involving
  sweeping processes,'' \emph{Set-Valued and Variational Analysis}, vol.~27,
  no.~2, pp. 523--548, 2019.

\bibitem{moreau1977evolution}
J.~J. Moreau, ``Evolution problem associated with a moving convex set in a
  hilbert space,'' \emph{Journal of differential equations}, vol.~26, no.~3,
  pp. 347--374, 1977.

\bibitem{moreau1999numerical}
------, ``Numerical aspects of the sweeping process,'' \emph{Computer methods
  in applied mechanics and engineering}, vol. 177, no. 3-4, pp. 329--349, 1999.

\bibitem{jones2003one}
S.~Jones, M.~L. Hughes, O.~A. Toness, and R.~N. Davis, ``A one-dimensional
  analysis of rigid-body penetration with high-speed friction,''
  \emph{Proceedings of the Institution of Mechanical Engineers, Part C: Journal
  of Mechanical Engineering Science}, vol. 217, no.~4, pp. 411--422, 2003.

\bibitem{shi2014model}
C.~Shi, M.~Wang, J.~Li, and M.~Li, ``A model of depth calculation for
  projectile penetration into dry sand and comparison with experiments,''
  \emph{International Journal of Impact Engineering}, vol.~73, pp. 112--122,
  2014.

\bibitem{acary2010nonsmooth}
V.~Acary, O.~Bonnefon, and B.~Brogliato, \emph{Nonsmooth Modeling and
  Simulation for Switched Circuits}.\hskip 1em plus 0.5em minus 0.4em\relax
  Springer, 2010, vol.~69.

\bibitem{drabek1999nonlinear}
P.~Dr{\'a}bek, P.~Krejc{\'\i}, and P.~Tak{\'a}c, \emph{Nonlinear differential
  equations}.\hskip 1em plus 0.5em minus 0.4em\relax CRC Press, 1999, vol. 404.

\bibitem{tanwani2015observer}
A.~Tanwani, B.~Brogliato, and C.~Prieur, ``Observer design for unilaterally
  constrained lagrangian systems: A passivity-based approach,'' \emph{IEEE
  Transactions on Automatic Control}, vol.~61, no.~9, pp. 2386--2401, 2015.

\bibitem{brogliato2020dynamical}
B.~Brogliato and A.~Tanwani, ``Dynamical systems coupled with monotone
  set-valued operators: Formalisms, applications, well-posedness, and
  stability,'' \emph{SIAM Review}, vol.~62, no.~1, pp. 3--129, 2020.

\bibitem{bressan2017growth}
A.~Bressan, M.~Palladino, and W.~Shen, ``Growth models for tree stems and
  vines,'' \emph{Journal of Differential Equations}, vol. 263, no.~4, pp.
  2280--2316, 2017.

\bibitem{bressan2017well}
A.~Bressan and M.~Palladino, ``Well-posedness of a model for the growth of tree
  stems and vines,'' \emph{Discrete \& Continuous Dynamical Systems - A},
  vol.~38, 2017.

\bibitem{del2017efficient}
E.~Del~Dottore, A.~Mondini, A.~Sadeghi, V.~Mattoli, and B.~Mazzolai, ``An
  efficient soil penetration strategy for explorative robots inspired by plant
  root circumnutation movements,'' \emph{Bioinspiration \& biomimetics},
  vol.~13, no.~1, p. 015003, 2017.

\bibitem{del2016circumnutations}
------, ``Circumnutations as a penetration strategy in a plant-root-inspired
  robot,'' in \emph{2016 IEEE International Conference on Robotics and
  Automation (ICRA)}.\hskip 1em plus 0.5em minus 0.4em\relax IEEE, 2016, pp.
  4722--4728.

\bibitem{del2019characterization}
E.~Del~Dottore, A.~Mondini, A.~Sadeghi, and B.~Mazzolai, ``Characterization of
  the growing from the tip as robot locomotion strategy,'' \emph{Frontiers in
  Robotics and AI}, vol.~6, p.~45, 2019.

\bibitem{tedone2020optimal}
F.~Tedone, E.~Del~Dottore, M.~Palladino, B.~Mazzolai, and P.~Marcati, ``Optimal
  control of plant root tip dynamics in soil,'' \emph{Bioinspiration \&
  Biomimetics}, 2020.

\bibitem{thibault2003sweeping}
L.~Thibault, ``Sweeping process with regular and nonregular sets,''
  \emph{Journal of Differential Equations}, vol. 193, no.~1, pp. 1--26, 2003.

\bibitem{ross2015riemann}
I.~M. Ross, R.~J. Proulx, M.~Karpenko, and Q.~Gong, ``Riemann--stieltjes
  optimal control problems for uncertain dynamic systems,'' \emph{Journal of
  Guidance, Control, and Dynamics}, vol.~38, no.~7, pp. 1251--1263, 2015.

\bibitem{bettiol2019necessary}
P.~Bettiol and N.~Khalil, ``Necessary optimality conditions for average cost
  minimization problems,'' \emph{Discrete \& Continuous Dynamical Systems - B},
  vol.~24, pp. 2093--2124, 2019.

\bibitem{palladino2016necessary}
M.~Palladino, ``Necessary conditions for adverse control problems expressed by
  relaxed derivatives,'' \emph{Set-Valued and Variational Analysis}, vol.~24,
  no.~4, pp. 659--683, 2016.

\bibitem{ye1998hamilton}
J.~J. Ye and J.~Zhu, ``Hamilton-jacobi theory for a generalized optimal
  stopping time problem,'' \emph{Nonlinear Analysis}, vol.~34, no.~3, pp.
  1029--1053, 1998.

\bibitem{ye2000discontinuous}
J.~Ye, ``Discontinuous solutions of the hamilton--jacobi equation for exit time
  problems,'' \emph{SIAM Journal on Control and Optimization}, vol.~38, no.~4,
  pp. 1067--1085, 2000.

\bibitem{malisoff2002viscosity}
M.~Malisoff, ``Viscosity solutions of the bellman equation for exit time
  optimal control problems with vanishing lagrangians,'' \emph{SIAM Journal on
  Control and Optimization}, vol.~40, no.~5, pp. 1358--1383, 2002.

\bibitem{plaskacz2003value}
S.~Plaskacz, \emph{Value functions in control systems and differential games: A
  viability method}.\hskip 1em plus 0.5em minus 0.4em\relax Nicolaus Copernicus
  University, 2003, vol.~5.

\bibitem{vinter2010optimal}
R.~Vinter, \emph{Optimal control}.\hskip 1em plus 0.5em minus 0.4em\relax
  Springer Science \& Business Media, 2010.

\bibitem{serea2003reflecting}
O.-S. Serea, ``On reflecting boundary problem for optimal control,'' \emph{SIAM
  journal on control and optimization}, vol.~42, no.~2, pp. 559--575, 2003.

\bibitem{donchev2005strong}
T.~Donchev, V.~Rios, and P.~Wolenski, ``Strong invariance and one-sided
  lipschitz multifunctions,'' \emph{Nonlinear Analysis: Theory, Methods \&
  Applications}, vol.~60, no.~5, pp. 849--862, 2005.

\bibitem{van1999characteristic}
T.~D. Van, M.~Tsuji, and N.~D.~T. Son, \emph{The Characteristic method and its
  generalizations for first-order nonlinear partial differential
  equations}.\hskip 1em plus 0.5em minus 0.4em\relax CRC Press, 1999, vol. 101.

\bibitem{aumann1965integrals}
R.~J. Aumann, ``Integrals of set-valued functions,'' \emph{Journal of
  Mathematical Analysis and Applications}, vol.~12, no.~1, pp. 1--12, 1965.

\bibitem{artstein1989parametrized}
Z.~Artstein, ``Parametrized integration of multifunctions with applications to
  control and optimization,'' \emph{SIAM Journal on Control and Optimization},
  vol.~27, no.~6, pp. 1369--1380, 1989.

\bibitem{saint1999parametrized}
J.~Saint-Pierre and S.~Sajid, ``Parametrized integral of multifunctions in
  banach spaces,'' \emph{Journal of mathematical analysis and applications},
  vol. 239, no.~1, pp. 49--71, 1999.

\bibitem{castaing2006convex}
C.~Castaing and M.~Valadier, \emph{Convex analysis and measurable
  multifunctions}.\hskip 1em plus 0.5em minus 0.4em\relax Springer, 2006, vol.
  580.

\bibitem{lojasiewicz1985some}
S.~{\L}ojasiewicz~jr, ``Some theorems of scorza-dragoni type for multifunctions
  with application to the problem of existence of solutions for differential
  multivalued equations,'' \emph{Banach Center Publications}, vol.~14, no.~1,
  pp. 625--643, 1985.

\bibitem{veliov1997lipschitz}
V.~Veliov, ``Lipschitz continuity of the value function in optimal control,''
  \emph{Journal of Optimization Theory and Applications}, vol.~94, no.~2, pp.
  335--363, 1997.

\bibitem{wolenski1998proximal}
P.~R. Wolenski and Y.~Zhuang, ``Proximal analysis and the minimal time
  function,'' \emph{SIAM journal on control and optimization}, vol.~36, no.~3,
  pp. 1048--1072, 1998.

\bibitem{donchev2008extensions}
T.~Donchev and A.~Dontchev, ``Extensions of clarke's proximal characterization
  for reachable mappings of differential inclusions,'' \emph{Journal of
  mathematical analysis and applications}, vol. 348, no.~1, pp. 454--460, 2008.

\bibitem{clarke1995qualitative}
P.~Clarke, Y.~S. Ledyaev, R.~Stern, and P.~Wolenski, ``Qualitative properties
  of trajectories of control systems: a survey,'' \emph{Journal of dynamical
  and control systems}, vol.~1, no.~1, pp. 1--48, 1995.

\bibitem{donchev2015value}
T.~Donchev and A.~Nosheen, ``Value function and optimal control of differential
  inclusions,'' \emph{Annals of the Alexandru Ioan Cuza
  University-Mathematics}, vol.~61, no.~1, pp. 181--193, 2015.

\bibitem{giga2013hamilton}
Y.~Giga and N.~Hamamuki, ``Hamilton-jacobi equations with discontinuous source
  terms,'' \emph{Communications in Partial Differential Equations}, vol.~38,
  no.~2, pp. 199--243, 2013.

\bibitem{frankowska1995measurable}
H.~Frankowska, S.~Plaskacz, and T.~Rzezuchowski, ``Measurable viability
  theorems and the hamilton-jacobi-bellman equation,'' \emph{Journal of
  Differential Equations}, vol. 116, no.~2, pp. 265--305, 1995.

\bibitem{cannarsa2000optimality}
P.~Cannarsa, H.~Frankowska, and C.~Sinestrari, ``Optimality conditions and
  synthesis for the minimum time problem,'' \emph{Set-valued analysis}, vol.~8,
  no. 1-2, pp. 127--148, 2000.

\end{thebibliography}

\end{document}